\newtheorem{Thm}{Theorem}[section]
\newtheorem{Lem}[Thm]{Lemma}
\newtheorem{Prop}[Thm]{Proposition}
\newtheorem{Cor}[Thm]{Corollary}
\theoremstyle{definition}
\newtheorem{Def}[Thm]{Definition}
\newtheorem{Exa}[Thm]{Example}
\newcommand{\R}{\mathbb{R}} 
\begin{document}

\title{A new generalization of the Takagi function}
\author{Kazuki Okamura} 
\address{Research Institute for Mathematical Sciences, Kyoto University, Kyoto 606-8502, JAPAN}  
\email{{\tt kazukio@kurims.kyoto-u.ac.jp}} 

\subjclass[2000]{Primary : 26A27; secondary : 39B22; 60G42; 60G30}

\begin{abstract} 
We consider a one-parameter family of 
functions $\{F(t,x)\}_{t}$ on $[0,1]$ 
and partial derivatives $\partial_{t}^{k} F(t, x)$ 
with respect to the parameter $t$.   
Each function of the class is defined by a certain pair of two square matrices of order two. 
The class includes the Lebesgue singular functions and other singular functions.    
Our approach to the Takagi function is similar to Hata and Yamaguti. 
The class of partial derivatives $\partial_{t}^{k} F(t, x)$
includes the original Takagi function and some generalizations.   
We consider real-analytic properties of $\partial_{t}^{k} F(t, x)$ as a function of $x$,  
specifically, 
differentiability, the Hausdorff dimension of the graph, 
the asymptotic around dyadic rationals, 
variation, 
a question of local monotonicity and 
a modulus  of continuity.   
Our results are extensions of some results for the original Takagi function and some generalizations.  
\end{abstract} 

\maketitle

\section{Introduction}

The Takagi function \cite{T}, which is denoted by $T$ throughout the paper,  is an example of continuous nowhere differentiable functions 
and has been considered from various points of view. 
Since $T$ is a fractal function, 
it is interesting to investigate real-analytic properties of $T$. 
For example, differentiability, the Hausdorff dimension of the graph, the asymptotic around dyadic rationals  and 
a modulus  of continuity  of $T$ have been considered.

Hata and Yamaguti \cite{HY} showed the following relation between the Takagi function $T(x)$ and the Lebesgue singular\footnote{In this paper a singular function is a continuous increasing function on $[0,1]$ whose derivatives are zero Lebesgue-a.e.}  function $L_{a}(x)$ with singularity parameter $a$ : 
\begin{equation}\label{HY-fund}
\frac{\partial}{\partial a}\bigg|_{a = 1/2} L_{a}(x) = T(x). 
\end{equation}
Now give a precise definition of $L_a$. 
Let $\mu_{a}$ be the probability measure on $\{0,1\}$ with $\mu_{a}(\{0\}) = a$ and $\mu_{a}^{\otimes\mathbb{N}}$ be the product measure of $\mu_{a}$ on $\{0,1\}^{\mathbb{N}}$.
Let $\varphi : \{0,1\}^{\mathbb{N}} \to [0,1]$  be a function defined by $\varphi((x_{n})_{n}) = \sum_{n=1}^{\infty} x_{n}/2^{n}$.
Let $L_{a}$ be the distribution function of the image measure of $\mu_{a}^{\otimes\mathbb{N}}$ by  $\varphi$. 
$L_a$ is identical with $\Phi_{2,1/a}$ in Paradis, Viader and  Bibiloni \cite{PVB}.

Recently, 
de Amo, D\'iaz Carrillo and  Fern\'andez-S\'anchez \cite{dA1} considered $\partial_{a}^n L_{a}(x)$ at $a \ne 1/2$. 
(Here and henceforth $\partial_{z}^{n}$ denotes the $n$-th partial derivatives 
with respect to the variable $z$. If $n=1$ write simply $\partial_{z}$.)  
They showed for any $a \ne 1/2$ and for $n \ge 1$,   
$\partial_{a}^{n} L_{a}(x)$ has zero derivative at almost every $x$. 
They claimed if $n$ is odd,  $\partial_{a}^{n} L_{a}$ is of monotonic type on no open interval (MTNI\footnote{We follow Brown, Darji and Larsen \cite{BDL} for this terminology.}). 
That is, on any open interval $J$ in $[0,1]$,
\[ -\infty = \inf_{x, y \in J, x \ne y} \frac{\partial_{a}^{n} L_{a}(x) - \partial_{a}^{n} L_{a}(y)}{x-y} < \sup _{x, y \in J, x \ne y} \frac{\partial_{a}^{n} L_{a}(x) - \partial_{a}^{n} L_{a}(y)}{x-y}= +\infty. \]

In this paper  
we consider a further generalization of $T$ by replacing $L_a$ in (\ref{HY-fund}) with more general functions and parametrizations. 
The author's paper  \cite{O1} considers a probability measure $\mu_{A_0, A_1}$ on $[0,1]$ defined by a certain pair of two $2 \times 2$ real matrices $(A_0, A_1)$. 
$\mu_{A_0, A_1}$ is singular or absolutely continuous with respect to the Lebesgue measure. 
The class of probability measures in \cite{O1} contains not only the Bernoulli measures but also many {\it non-product} measures\footnote{We identify $[0,1)$ with the Cantor space $\{0,1\}^{\mathbb{N}}$ in the natural way. We consider non-atomic measures on $[0,1]$ only  and we do not need to distinguish $[0,1]$ from $[0,1)$.}.   
Parametrize $(A_0, A_1)$ by a parameter $t$ around $0$.    
Assume each component of $A_0(t)$ and $A_1(t)$ is smooth\footnote{In this paper a smooth function is a function differentiable infinitely many times.} with respect to $t$ and $(A_0(0), A_1(0)) = (A_0, A_1)$.   
Denote the distribution function of $\mu_t$ by $F(t, \cdot)$. 
That is, $F(t, x) = \mu_{A_0(t), A_1(t)}([0,x]), x \in [0,1]$.   

The main subject of this paper is investigating real analytic properties for the $k$-th partial derivative $f_k(x) := \partial^{k}_{t}F(0,x)$. 
Our framework gives a generalization of $T$.  
$F(t, x) = L_{a+t}(x)$ for a specific choice of $(A_0(t), A_1(t))$. 
Thus our framework contains the one of \cite{dA1}. 
Our generalization is different from the ones by \cite{HY} and K\^ono \cite{Ko}. 
The graphs of these curves can be quite different, from Takagi's classical, $T$,  
to very {\it asymmetrical} ones as shown in figures \ref{fig1} and \ref{fig2} below.    
In Section 2 we will give the framework and show $f_k$ is well-defined and continuous on $[0,1]$ for each $k \ge 1$.

In Section 3 we will show the Hausdorff dimension of the graph of $f_k$ is $1$.  
This extends Allaart and Kawamura \cite[Corollary 4.2]{AK1} and is applicable to the framework in \cite[Section 5]{dA1}. 
Our proof is different from Mauldin and Williams \cite{MW} and \cite{AK1} and seems simpler than them because we do not need to investigate strength of continuity of $f_k$.     
In Section 4 we will show the derivative of $f_k$ is $0$ almost everywhere.    
This extends \cite[Theorems 12 and 13]{dA1}. 
We will examine the asymptotic of $f_k$ around dyadic rationals in Section 5.  
The asymptotic of $f_k$ around {\it dyadic rationals} and around {\it Lebesgue-a.e. points} can be similar on the one hand but can be considerably different on the other hand.   
As shown in Figure \ref{fig1} there is a fractal function whose derivatives are zero at all dyadic rationals.  
To our knowledge such a fractal function is unusual.    

If we consider the case $k=1$ and the ``linear" case, each of which contains the original Takagi function $T$, 
we have more sophisticated results. 
In Theorem \ref{N-Conv-V} we will consider differentiability and variation of $f_{k}$.  
\cite[Theorem 14]{dA1} 
states if we consider the ``linear" case and $k$ is odd, $f_k$ is MTNI.  
Theorem \ref{MTNI} will extend \cite[Theorem 14]{dA1} to \textit{all} $k \ge 1$.  
If $\mu_0$ is singular, the asymptotic of $f_k$ around {\it $\mu_0$-a.s. points} and around {\it Lebesgue-a.e. points} can be considerably different.   
In Section 7 we will consider a modulus  of continuity of $f_1$. 
Theorem \ref{E-AK-JMAA} will extend Allaart and Kawamura \cite[Theorem 5.4]{AK-JMAA}, 
which gives a necessary and sufficient condition for the existence of 
\[ \lim_{h \to 0} \frac{T(x+h) - T(x)}{h\log_2 (1/|h|)} \ \ \text{ at non-dyadic $x$. } \] 
Theorem \ref{LMC} will investigate a modulus of continuity of $f_1$ at {\it $\mu_0$-a.s. points}.  
It is similar to \cite{Ko}. 
We have the original Takagi function case\footnote{\cite{Ko} considers this in a general setting different from ours.}  of \cite{Ko} by our approach.   
Our proofs are different from \cite{AK-JMAA} and  \cite{Ko}.  
We do not use \cite[Lemma 3]{Ko} which plays an important role in \cite{AK-JMAA} and  \cite{Ko}.


\section{Framework}  

\subsection{Definition of $\mu_{A_{0}, A_{1}}$} 

Let $A_{i} = \begin{pmatrix} a_i & b_i \\ c_i & d_i \end{pmatrix}$, $i = 0,1$, 
be two real $2 \times 2$ matrices such that the following hold  :  \\
(i) $0 = b_0 < \dfrac{a_0 + b_0}{c_0 + d_0} = \dfrac{b_1}{d_1} < \dfrac{a_1 + b_1}{c_1 + d_1}  = 1$. \\
(ii) $a_i d_i - b_i d_i > 0$, $i = 0,1$. \\
(iii) $(a_i d_i - b_i d_i )^{1/2} < \min\{c_i, c_i + d_i\}$, $i = 0,1$. 

Consider a functional equation for $f : [0,1] \to \R$ : 
\begin{equation}\label{Def} 
	f(x) = \begin{cases}
						\Phi\left(A_{0}, f(2x)\right) & 0 \le x \le 1/2 \\
						\Phi\left(A_{1},  f(2x-1)\right) & 1/2  \le x \le 1, 
				          \end{cases}
				          \ \ \text{where } \Phi(A, z) := \dfrac{az+b}{cz+d} \text{ for } A = \begin{pmatrix} a & b\\ c & d \end{pmatrix}.  
\end{equation}
Conditions (i) - (iii) assure the existence of 
a unique continuous solution for (\ref{Def}). 
(\ref{Def}) is a special case of de Rham's functional equations \cite{dR}. 
Let $\mu_{A_{0}, A_{1}}$ be the measure such that the unique continuous solution $f$ of (\ref{Def}) is the distribution function of $\mu_{A_{0}, A_{1}}$.  
By conditions (i) - (iii) 
we can represent all components of $A_0, A_1$ by $b_1, c_0$ and $c_1$.  
We can assume  $d_{0} = d_{1} = 1$. 
Conditions (i) - (iii) imply $a_0 = b_{1}(c_{0}+1)$, $b_0 = 0$, $a_1 = 1- b_{1} + c_{1}$ and  
\begin{equation}\label{Cond} 
b_{1} \in (0,1), 
c_{0} \in \left(b_{1}-1, \frac{1}{b_{1}}-1\right), 
c_{1} \in \left(-b_{1}, \frac{b_{1}}{1-b_{1}}\right).   
\end{equation} 

If $b_{1} = a$ and $c_{0} = c_{1} = 0$  the Lebesgue singular function $L_{a}$ is the distribution function of $\mu_{A_{0}, A_{1}}$.   
$c_{0} = c_{1} = 0$ if and only if both $\Phi(A_{0}; \cdot)$ and $\Phi(A_{1}; \cdot)$ are  linear functions. 
By \cite[Theorem 1.2]{O1},  
$\mu_{A_{0}, A_{1}}$ is absolutely continuous if $c_0 = (2b_1)^{-1} - 1$ and $c_1 = 1-2b_1$, and singular otherwise. 
Let 
\[ \alpha := \min\left\{0, \frac{c_{0}}{1-b_{1}(c_{0}+1)}, \frac{c_{1}}{b_{1}} \right\}   
\text{ and } \beta := \max\left\{0, \frac{c_{0}}{1-b_{1}(c_{0}+1)}, \frac{c_{1}}{b_{1}} \right\}. \] 

$\alpha = \beta = 0$ if and only if $c_0 = c_1 = 0$. 
Roughly speaking $\alpha$ and $\beta$ measure how $\mu_{A_0, A_1}$ is ``far" from the Bernoulli measures.

Now define a ``dual" $(\widetilde A_{0}, \widetilde A_{1})$ associated with $(A_0, A_1)$ in order to shorten some proofs. 
\begin{Def}[Dual matrices]\label{Dual-Def}  
Let 
\begin{equation}\label{dual-Def}
(\widetilde b_1, \widetilde c_0, \widetilde c_1) := \left(1-b_1, -\frac{c_1}{1+c_1}, -\frac{c_0}{1+c_0} \right), 
\end{equation}   
Define $\widetilde A_{i}, i = 0,1$, $\widetilde\alpha$ and $\widetilde\beta$ 
by substituting $(\widetilde b_1, \widetilde c_0, \widetilde c_1)$ for $(b_1, c_0, c_1)$
in the definition of $A_{i}$, $\alpha$ and $\beta$.    
(\ref{Cond}) holds for  $(\widetilde A_0, \widetilde A_1)$ if and only if it holds for $(A_0, A_1)$.   
We have
\begin{equation}\label{Dual}
\mu_{\widetilde A_{0}, \widetilde A_1}([0,x]) = \mu_{A_{0},  A_1}([1-x, 1]), \, x \in [0,1]. 
\end{equation} 
\begin{equation}\label{Double}
\widetilde{\widetilde{A_i}} = A_i, \, i=0,1.   
\end{equation} 
\end{Def}

\subsection{Parametrization}

(1) In addition to (\ref{Cond}) 
we assume either the Lipschitz constant of $\Phi\left(^{t} \! A_{1}; y\right)$ on $y \in [\alpha, \beta]$ or   
the Lipschitz constant of $\Phi\left(^{t} \! \widetilde A_{1}; y\right)$ on $y \in \left[ \widetilde\alpha, \widetilde\beta \right]$ is strictly less than $1$. 
That is 
\begin{equation} \label{Ass}
(1+c_{1})\left(1-b_{1}(1+c_{0})\right)^{2} < 1-b_{1} \text{ or } (b_1 + c_1)^2 < b_1(1+c_0)(1+c_1).    
\end{equation} 

Assume this condition 
by a difficulty arising in computation in Lemma \ref{Bdd} below.  
However if $c_{0} = c_{1} = 0$,    
(\ref{Ass}) holds.   
The Lipschitz constant of $\Phi\left(^{t} \! A_{0}; y\right)$ on $y \in [\alpha, \beta]$ and  
the Lipschitz constant of $\Phi\left(^{t} \! \widetilde A_{0}; y\right)$ on $y \in \left[\widetilde\alpha, \widetilde\beta\right]$ are strictly less than $1$.

(2) Conditions (\ref{Cond}) and (\ref{Ass}) define an open set $E$ in $\R^3$ in which  we will consider different curves.  
\begin{align*} 
E := &\left\{(x, y, z) \in \R^{3} \ \bigg|  \ 0 < x < 1,  x -1 < y < \frac{1-x}{x}, -x < z < \frac{x}{1-x} \right\} \\
& \ \ \cap \bigl\{ (x, y, z) \big|  (1+z)(1-x(1+y))^{2} < 1-x \text{ or } (x+z)^2 < x(1+y)(1+z) \bigr\}. 
\end{align*}   

(3) Fix a point $(b_0, c_0, c_1) \in E$. 
We consider a smooth curve $(b_{1}(t), c_{0}(t), c_{1}(t))$ in $E$ on an open interval containing $0$ 
such that $(b_{1}(0), c_{0}(0), c_{1}(0)) = (b_0, c_0, c_1)$.

Define $A_0(t), A_1(t), \alpha(t), \beta(t)$ 
by substituting $(b_{1}(t), c_{0}(t), c_{1}(t))$ for $(b_1, c_0, c_1)$ 
in the definition of $A_0, A_1, \alpha, \beta$.       
Let 
\begin{equation*}
\mu_{t} := \mu_{A_{0}(t), A_{1}(t)} \text{ and } F(t, x) := \mu_{t}([0,x]),  \  \  x \in [0,1].
\end{equation*}    

This class of smooth curves includes the frameworks of \cite{AK1}, \cite{AK-JMAA}, \cite{dA1} and \cite{HY}.   
We have 
\[ \{(x,0,0) : 0 < x < 1\} \subset E. \]
If $(b_1(t), c_{0}(t), c_{1}(t)) = (t+a, 0, 0)$,  $F(0, x) = L_{a}(x)$.

\subsection{Notation and lemma} 

Let $X_{i}(x) := z_{i}$ if $x = \sum_{n \ge 1} 2^{-n} z_{n}$ is the dyadic expansion of $x$\footnote{As usual we assume the number of $n$ with $z_n = 1$ is finite.}.  

\begin{Def}\label{Def-many}
\textup{(i)}  \ 
\begin{equation}\label{Def-G}
G_{j}(t, y) := \Phi\left(^{t}\!A_{j}(t); y\right),   \ y \in [\alpha(t), \beta(t)],  j = 0,1.  
\end{equation}
\textup{(ii)}   \ 
\[ p_{0}(t, y) := \frac{y+1}{y+b_{1}(t)^{-1}}\text{ and } p_{1}(t, y) := 1-p_{0}(t, y),  \ y \in [\alpha(t), \beta(t)].  \]  
\textup{(iii)}  Let $p_{min}(t)$ and $p_{\text{max}}(t)$ be the minimum  and maximum of $\left\{p_{0}(t, \alpha(t)), p_{1}(t, \beta(t))\right\}$. \\
\textup{(iv)}   \ 
\[  g_{0}(t, x) := 0 \text{ and }  g_{i}(t, x) := G_{X_{i}(x)}\left(t, g_{i-1}(t, x)\right), \ \ x \in [0,1),  i \ge 1.\]  
\textup{(v)}  
\begin{equation*}
H_{n}(t, x) := p_{X_{n+1}(x)}\left(t, g_{i}(t, x)\right),  \ \ P_{n}(t, x) := p_{0}\left(t, g_{n}(t, x)\right),  \ \ x \in [0, 1).   
\end{equation*} 
\textup{(vi)} 
\[ M_n(t, x) := \prod_{i=0}^{n-1} H_{i}(t, x),   \ \  x \in [0, 1). \]
\end{Def}

\begin{Exa}
If $c_0(t) = c_1(t) = 0$ then for $x \in [0,1)$ 
\[ G_0(t, x) = b_1(t) x, \ \ G_1(t,x) = (1-b_1(t))x.  \]
\[ \alpha(t) = g_{n}(t, x) = \beta(t) =  0, \ \ n \ge 0. \] 
\[ p_{0}(t, 0) =  P_n(t, x) = b_1(t) = 1- p_1(t,0), \ \ n \ge 0. \] 
\[ p_{min}(t) = \min\{b_1(t), 1-b_1(t)\} \text{ and } p_{max}(t) = \max\{b_1(t), 1-b_1(t)\}. \] 
\[H_n(t,x) = b_1(t)1_{\{X_{n+1}(x) = 0\}}(x) + (1-b_1(t))1_{\{X_{n+1}(x) = 1\}}(x).  \] 
\[ M_n(t, x) = b_1(t)^{a_{n, 0}}(1-b_1(t))^{n-a_{n,0}} \text{ where }  a_{n,0} := \left|\{1 \le i  \le n : X_i(x) = 0\} \right|.\]    
\end{Exa}

In this case we do not need to introduce $G, g, p, P, H \text{ and } M$.   
However we would like to consider the case that $c_0(t) = c_1(t) = 0$ {\it fails}.  
$G_i, g_n, p_i, H_n \text{ and } M_n$ are defined in order to give a useful expression for $F(t,x)$ in (\ref{Lem1-2}) below.  

The following are easy to see so the details are left to readers. 
\begin{Lem}\label{Basic-Lem}
For $n \ge 0$ and $x \in [0,1)$ \\
\textup{(i)}  \ 
\begin{equation}\label{Basic1} 
\alpha(t) \le g_{n}(t, x) \le \beta(t),\ \    
\end{equation} 
\textup{(ii)}  \ 
\begin{equation}\label{Basic2} 
0 < p_{\text{min}}(t) \le H_{n}(t, x) \le p_{\text{max}}(t) < 1. 
\end{equation} 
\textup{(iii)}  \ 
\begin{equation}\label{Lem1-1}
\mu_{t}\left( [x_{n}, x_{n} + 2^{-n}) \right) = M_n(t,x).  
\end{equation} 
\textup{(iv)}  
\begin{equation}\label{Lem1-2} 
F(t,x) = \sum_{n = 0}^{+\infty} X_{n+1}(x) \left(M_n(t,x) - M_{n+1}(t,x)\right). 
\end{equation} 
\end{Lem}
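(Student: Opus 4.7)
The plan is to establish the four assertions in order, treating (i)--(ii) as pointwise facts about the one-dimensional dynamics of the M\"obius maps $G_0,G_1$ and the probabilities $p_0,p_1$ on $[\alpha(t),\beta(t)]$, and (iii)--(iv) as structural identities obtained from the functional equation (\ref{Def}) together with a dyadic decomposition.

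For (i), I would induct on $n$: the base is $g_0\equiv 0\in[\alpha(t),\beta(t)]$, which holds by the very definition of $\alpha,\beta$, and the inductive step reduces to showing $G_j(t,[\alpha(t),\beta(t)])\subseteq[\alpha(t),\beta(t)]$ for $j=0,1$. A direct computation from $\Phi({}^{t}\!A;\cdot)$ shows that $G_0(t,y)=b_1(c_0+1)y+c_0$ is affine with slope in $(0,1)$ by (\ref{Cond}) and fixed point $c_0/(1-b_1(c_0+1))$, while $G_1(t,y)=((1-b_1+c_1)y+c_1)/(b_1y+1)$ is a strictly increasing M\"obius map with fixed point $c_1/b_1$ (verified by direct substitution). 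Both fixed points lie in $\{\alpha,\beta\}$ by definition, so invariance at the endpoints follows from monotonicity plus a short case analysis on the signs of $c_0,c_1$. For (ii), differentiation gives $\partial_y p_0(t,y)=(b_1(t)^{-1}-1)/(y+b_1(t)^{-1})^2>0$ since $b_1(t)\in(0,1)$; hence $p_0$ is strictly increasing, $p_1=1-p_0$ strictly decreasing, and (\ref{Cond}) keeps both strictly in $(0,1)$ on $[\alpha,\beta]$. Combining this with (i) yields the claimed uniform two-sided bound on $H_n$.

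For (iii), I would induct on $n$ using (\ref{Def}) and the M\"obius difference identity $\Phi(A,z_2)-\Phi(A,z_1)=(\det A)(z_2-z_1)/((cz_1+d)(cz_2+d))$ applied to $A=A_{X_1(x)}(t)$. Splitting on the value of $X_1(x)$, changing variables via the dyadic shift $\sigma$, and invoking the inductive hypothesis on $\sigma x$, the measure $\mu_t([x_{n+1},x_{n+1}+2^{-(n+1)}))$ factors as $H_0(t,x)\cdot\prod_{i=1}^{n}H_i(t,x)$, provided one recognizes that the M\"obius denominators generated by iterating the functional equation, together with the seed relation $g_1(t,x)=G_{X_1(x)}(t,0)$, account exactly for the discrepancy between the orbit $(g_i(t,x))_{i\ge 1}$ and the shifted orbit $(g_i(t,\sigma x))_{i\ge 0}$. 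For (iv), I would decompose $[0,x)=\bigsqcup_{i\ge 0:\,X_{i+1}(x)=1}[x_i,x_i+2^{-(i+1)})$ and apply (iii) to each summand; the defining point of the $i$-th interval has $(i+1)$-st dyadic digit $0$, so the last factor of the resulting $M_{i+1}$-product is $p_0(t,g_i(t,x))=P_i(t,x)$ rather than $p_1(t,g_i(t,x))$, and when $X_{i+1}(x)=1$ the identity $P_i=1-H_i$ converts $M_i(t,x)\cdot P_i(t,x)$ into $M_i(t,x)-M_{i+1}(t,x)$, producing the asserted sum.

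The main obstacle will be the bookkeeping in (iii): the shifted orbit $(g_k(t,\sigma x))_k$ is not literally the tail of $(g_k(t,x))_k$, so the M\"obius denominators arising from the functional equation must be shown to telescope exactly against the probabilities $p_j(t,g_i(t,x))$ to reproduce the intended product $H_0H_1\cdots H_n$. Part (i), although conceptually a routine monotone-map invariance check, also requires a small case analysis according to which of $\{0,\,c_0/(1-b_1(c_0+1)),\,c_1/b_1\}$ realizes $\alpha$ and which realizes $\beta$.
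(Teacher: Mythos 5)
The paper gives no proof of this lemma (``easy to see, details left to readers''), so I can only assess your route on its own terms. Parts (i), (ii) and (iv) are essentially right. For (i) your computation of $G_0, G_1$ and their fixed points $c_0/(1-b_1(1+c_0))$ and $c_1/b_1$ is correct, and since $G_0$ is an affine contraction, $G_1$ is increasing with $c_1/b_1$ attracting (and the other fixed point $-1$ repelling, lying outside $[\alpha(t),\beta(t)]$), monotone endpoint invariance does give $G_j([\alpha,\beta])\subseteq[\alpha,\beta]$. For (ii), your monotonicity of $p_0,p_1$ is correct; note, though, that combined with (i) it gives $\sup_n H_n=\max\{p_0(\beta),p_1(\alpha)\}=1-p_{\min}$, which equals $p_{\max}$ only when $\alpha=\beta$ --- for $\alpha<\beta$ the paper's stated upper bound $p_{\max}=\max\{p_0(\alpha),p_1(\beta)\}$ is off (though only ``$\sup H_n<1$'' is ever used). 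Part (iv) correctly reduces to (iii) via the dyadic decomposition and $P_i=1-H_i$ when $X_{i+1}(x)=1$.

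The genuine gap is exactly where you flag it: step (iii). Inducting on the first digit, the M\"obius difference identity gives
\begin{equation*}
\mu_t\bigl([x_{n+1},x_{n+1}+2^{-(n+1)})\bigr)
=\frac{\det A_\omega}{(c_\omega z_1+d_\omega)(c_\omega z_2+d_\omega)}\,
\mu_t\bigl([(\sigma x)_n,(\sigma x)_n+2^{-n})\bigr),
\end{equation*}
with $\omega=X_1(x)$, $z_1=F(t,(\sigma x)_n)$, $z_2=F(t,(\sigma x)_n+2^{-n})$. To close the induction you must show the prefactor equals $M_{n+1}(t,x)/M_n(t,\sigma x)=H_0(t,x)\prod_{i=1}^n p_{X_{i+1}(x)}(t,g_i(t,x))/p_{X_{i+1}(x)}(t,g_{i-1}(t,\sigma x))$. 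This does not telescope termwise: $g_i(t,x)$ and $g_{i-1}(t,\sigma x)$ are the same word $G_{X_i}\circ\cdots\circ G_{X_2}$ applied to the two distinct seeds $G_{X_1}(0)$ and $0$, so there is no local cancellation, and ``recognizing'' the identity is not a proof. The missing ingredient --- and the reason $G_j$ is defined from $^t\!A_j$ rather than $A_j$ --- is the identity
\begin{equation*}
g_n(t,x)=\frac{(A_{X_1(x)}(t)\cdots A_{X_n(x)}(t))_{21}}{(A_{X_1(x)}(t)\cdots A_{X_n(x)}(t))_{22}},
\end{equation*}
proved by a one-line induction, which ties the orbit $g_n$ to the bottom-row ratio of the running matrix product $B_n$. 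With it, writing $B_{n-1}=\left(\begin{smallmatrix}p&q\\ r&s\end{smallmatrix}\right)$ and unwrapping (\ref{Def}) $n-1$ times gives $F(t,x_{n-1})=\Phi(B_{n-1},0)$, $F(t,x_{n-1}+2^{-n})=\Phi(B_{n-1},b_1(t))$, $F(t,x_{n-1}+2^{-(n-1)})=\Phi(B_{n-1},1)$, and two applications of the difference identity yield the conditional ratio $\mu_t([x_n,x_n+2^{-n}))/\mu_t([x_{n-1},x_{n-1}+2^{-(n-1)}))=p_{X_n(x)}(t,r/s)=H_{n-1}(t,x)$, which closes the induction cleanly (backward in the last digit rather than forward in the first). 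Your forward route can also be closed --- the prefactor is $\det A_\omega\cdot s'_n(r'_n+s'_n)/(s_{n+1}(r_{n+1}+s_{n+1}))$ for the bottom rows of $B_{n+1}=A_\omega C_n$ and $C_n=A_{X_2}\cdots A_{X_{n+1}}$ --- but only after the same matrix identity is established, so the hand-waved ``telescoping'' is doing all the work.
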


By (i) $g_n(t,x), H_n(t,x), P_n(t,x)$ and $M_n(t,x)$ are well-defined for any $n$ and $x$.

Define $(\widetilde b_{1}(t), \widetilde c_{0}(t), \widetilde c_{1}(t))$ and  
$(\widetilde A_0(t), \widetilde A_1(t))$ 
by substituting $(b_{1}(t), c_{0}(t), c_{1}(t))$ in Definition \ref{Dual-Def}. 
By (\ref{Double})   
$(\widetilde b_{1}(t), \widetilde c_{0}(t), \widetilde c_{1}(t))$ is also a smooth curve in $E$.   
Define $\widetilde \mu_t, \widetilde F, \widetilde G_{j}, \widetilde g_n, \widetilde p_{j}, \widetilde P_i, \widetilde H_i , \widetilde M_n, \widetilde p_{\text{min}}$ and $\widetilde p_{\text{max}}$ in the same manner 
by substituting $(\widetilde b_0, \widetilde c_0, \widetilde c_1)$ for $(b_0, c_0, c_1)$. 
Lemma \ref{Basic-Lem} hold also for $\widetilde g_i, \widetilde H_n, \widetilde M_n, \widetilde \mu_t, \widetilde p_{\text{min}}$ and $\widetilde p_{\text{max}}$.

\subsection{Well-definedness and continuity of $f_k$}

(\ref{Ass}) has been introduced in order to establish a uniform boundedness for $\partial_{t}^{k} H_n(t,x)$ as follows.    

 \begin{Lem}\label{Bdd}
For any $k \ge 0$ 
there is  a continuous function $C_{1,k}(t)$ on a neighborhood of $t=0$ 
such that for each $t$ in the neighborhood :    
\begin{equation*}
\sup_{n \ge 0, x \in (0,1)} \left| \partial_{t}^{k} H_n(t,x) \right| \le C_{1,k}(t) 
\end{equation*} 
\end{Lem}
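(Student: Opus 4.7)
My plan is to proceed by induction on $k$, reducing control of $\partial_t^k H_n$ to that of $\partial_t^k g_n$ via the chain rule, and then iterating the recursion defining $g_n$, where the contractivity built into (\ref{Ass}) provides the geometric decay that delivers uniformity in $n$.

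For $k = 0$ the claim is exactly (\ref{Basic2}) with $C_{1,0}(t) = 1$. For the inductive step I would first expand $\partial_t^k H_n(t,x)$ using Fa\`a di Bruno on the composition $H_n = p_{X_{n+1}(x)}(t, g_n(t,x))$. The resulting expression is a universal polynomial in the mixed partial derivatives $\partial_t^i \partial_y^j p_{X_{n+1}(x)}(t, g_n(t,x))$ with $i + j \le k$ and in $\partial_t^i g_n(t,x)$ with $i \le k$. Since $p_j(t,y)$ is smooth on a neighborhood of $\{0\} \times [\alpha(0), \beta(0)]$ and $g_n(t,x) \in [\alpha(t), \beta(t)]$ by (\ref{Basic1}), all the $p$-factors are uniformly bounded by a continuous function of $t$. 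It therefore suffices to prove that $\sup_{n,x}|\partial_t^i g_n(t,x)|$ admits a continuous-in-$t$ bound for each $0 \le i \le k$.

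Assume temporarily that the first alternative of (\ref{Ass}) holds. As noted in the text, both $|\partial_y G_0(t,y)|$ and $|\partial_y G_1(t,y)|$ are then bounded on $[\alpha(t), \beta(t)]$ by some $\lambda(t) < 1$ depending continuously on $t$. Differentiating $g_n(t,x) = G_{X_n(x)}(t, g_{n-1}(t,x))$ in $t$ a total of $k$ times via Fa\`a di Bruno and isolating the unique term that still carries a $k$th-order derivative of $g_{n-1}$ yields
\[
 \partial_t^k g_n(t,x) = (\partial_y G_{X_n(x)})(t, g_{n-1}(t,x)) \, \partial_t^k g_{n-1}(t,x) + R_{n,k}(t,x),
\]
where $R_{n,k}$ depends only on $\partial_t^i g_{n-1}$ with $i \le k-1$ and mixed derivatives of $G_{X_n(x)}$. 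By the induction hypothesis and the smoothness of $G_j$, $R_{n,k}$ is uniformly bounded, so iterating from $g_0 \equiv 0$ gives $\sup_{n,x}|\partial_t^k g_n(t,x)| \le C_k(t)/(1-\lambda(t))$, continuous in $t$.

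If instead only the second alternative of (\ref{Ass}) holds, the first alternative holds for the dual pair $(\widetilde A_0, \widetilde A_1)$. A short computation using (\ref{Lem1-1}) and (\ref{Dual}) together with the non-atomicity of $\widetilde\mu_t$ gives the identity $M_n(t,x) = \widetilde M_n(t, 1-x)$ and hence $H_n(t,x) = \widetilde H_n(t, 1-x)$, so the bound just established for $\widetilde H_n$ transfers verbatim to $H_n$. The main obstacle is the combinatorial bookkeeping in the Fa\`a di Bruno expansion needed to isolate the single term $(\partial_y G_{X_n}) \cdot \partial_t^k g_{n-1}$ of top order in $g_{n-1}$: this isolation is exactly what allows the single contractive factor $\lambda(t) < 1$ to control the induction in $n$, and everything else (continuity of the bounds in $t$, uniform boundedness of the partial derivatives on the relevant compact sets, the duality identification) is mechanical.
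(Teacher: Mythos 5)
Your proposal is correct and follows the paper's proof in all essentials: reduce $\partial_t^k H_n$ to bounds on $\partial_t^i g_n$ via the chain rule (Faà di Bruno), then induct on $k$ in the recursion $\partial_t^k g_n = (\partial_y G_{X_n})(t,g_{n-1})\,\partial_t^k g_{n-1} + (\text{lower order})$ using the contraction factor from (\ref{Ass}), and finally transfer to the dual pair when only the second alternative of (\ref{Ass}) holds. The only cosmetic difference is that the paper first observes $|\partial_t^k H_n| = |\partial_t^k P_n|$ for $k\ge 1$ before applying the chain rule to $P_n = p_0(t,g_n)$, whereas you apply Faà di Bruno directly to $p_{X_{n+1}}$, and you make the duality identity $H_n(t,x) = \widetilde H_n(t,1-x)$ explicit where the paper merely says "the rest of the proof goes in the same manner."
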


\begin{proof}
The case $k=0$ follows from (\ref{Basic2}).  
Assume $k \ge 1$. 
Then $\left| \partial_{t}^{k} H_n(t,x) \right| = \left| \partial_{t}^{k} P_n(t,x) \right|$.   

Recall (\ref{Ass}). 
Assume  $(1+c_1)\left(1-b_1(1+c_0)\right) < 1 - b_1$. 
Then 
\begin{equation}\label{ass-1} 
(1+c_1(t))\left(1-b_1(t)(1+c_0(t))\right) < 1 - b_1(t)
\end{equation}
holds if $t$ is close to $0$. 

Since $\partial_{t}^{k} P_{n}(t, x)$ is a multivariate polynomial consisting of 
\[ \text{$\partial_{t}^{j} g_{n}(t, x)$ and $\partial_{t}^{j^{\prime}}\partial_{y}^{j^{\prime\prime}} P_{n}(t,x)$, 
$0 \le j, j^{\prime}, j^{\prime\prime} \le i$} \] 
as variables,  
it suffices to show that 
for each $k \ge 1$ 
there is  a continuous function $C_{2,k}(t)$ such that 
\begin{equation}\label{Bdd-g}
\sup_{n \ge 0, x \in (0,1)} \left|\partial_{t}^{k}g_{n}(t,x)\right| \le C_{2,k}(t) < +\infty.
\end{equation}  

We now show  (\ref{Bdd-g}) by induction on $k$.    
The case $k=0$ follows from (\ref{Basic1}).   
Assume  (\ref{Bdd-g}) holds for $k = 0, 1, \dots i-1$. Then 
\[ \partial_{t}^{i}g_{n}(t,x) = \partial_{y}G_{X_{n}(x)}(t, g_{n-1}(t,x)) \partial_{t}^{i}g_{n-1}(t,x) + \text{Poly}(i,n). \]
Here $\text{Poly}(i,n)$ is a multivariate polynomial consisting of 
\[ \text{ $\partial_{t}^{j}g_{n-1}(t,x)$ and 
$\partial_{t}^{j^{\prime}}\partial_{y}^{j^{\prime\prime}}G_{X_{n}(x)}(t, g_{n-1}(t,x))$, 
$0 \le j, j^{\prime}, j^{\prime\prime} \le i-1$ } \]
as variables.   

By the hypothesis of induction and (\ref{Basic1}), 
for each $i$, 
there is  a continuous function $C_{3,i}(t)$ such that  
\[ \left|\partial_{t}^{i}g_{n}(t,x)\right| \le \left(\max_{l \in \{0,1\}, y \in [\alpha(t), \beta(t)]}\partial_{y}G_{l}(t, y)\right) \left|\partial_{t}^{i}g_{n-1}(t,x)\right| + C_{3,i}(t). \]
By (\ref{ass-1}) 
\[ \max_{l \in \{0,1\}, y \in [\alpha(t), \beta(t)]} \partial_{y}G_{l}(t, y) < 1.\]  
Therefore  
\[ \sup_{n \ge 0, x \in (0,1)} \left|\partial_{t}^{i}g_{n}(t,x)\right| 
\le \frac{C_{3,i}(t)}{1- \max_{l \in \{0,1\}, y \in [\alpha(t), \beta(t)]} \partial_{y}G_{l}(t, y)}. \] 
Hence (\ref{Bdd-g}) holds.      

Second assume  $(b_1 + c_1)^2 < b_1(1+c_0)(1+c_1)$.
By (\ref{dual-Def}) and continuity of $(b_1(t), c_0(t), c_1(t))$ 
\[ (1+\widetilde c_1(t))\left(1- \widetilde b_1(t)(1+ \widetilde c_0(t))\right) < 1 -  \widetilde b_1(t) \] holds if $t$ is close to $0$. 
The rest of the proof goes in the same manner as above. 
\end{proof}

Let \[ x_{n} := \sum_{i=1}^{n} \frac{X_{i}(x)}{2^i}, \ \ x \in [0,1)\ \text{ and } \  D := \bigcup_{n \ge 1} \left\{\frac{k}{2^{n}} \ \bigg| \   1 \le k \le 2^{n}-1 \right\}.\]

\begin{Thm}\label{M1}
\textup{(i)} For any $k \ge 0$ there is  $C_k > 0$ such that 
\begin{equation}\label{M1-1}
\left|\frac{\partial_{t}^{k} F(0, x_n + 2^{-n}) - \partial_{t}^{k} F(0, x_n)}{F(0, x_n + 2^{-n}) - F(0, x_n)}\right| 
\le C_k n^{k}, \, x \in [0,1), n \ge 1. 
\end{equation}  
\textup{(ii)} $\partial_{t}^{k} F(0, x)$ is well-defined for any $x \in [0,1] \setminus D$. \\ 
\textup{(iii)} Let $C_k$ be the constant above. 
Then 
\begin{equation}\label{M1-C} 
\left| \frac{\partial_{t}^{k} F(0, x) - \partial_{t}^{k} F(0, y)}{F(0, x) - F(0, y)}\right| \le C_k  (-\log_2 |x-y|)^{k},  \,  x \ne y.  
\end{equation}  
\end{Thm}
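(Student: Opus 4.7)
I would prove the three parts in order. For part (i), note that $F(t, x_n + 2^{-n}) - F(t, x_n) = M_n(t, x)$ by (\ref{Lem1-1}), so the ratio equals $\partial_t^k M_n(0, x)/M_n(0, x)$. Writing $\log M_n = \sum_{i=0}^{n-1} \log H_i$, Lemma \ref{Bdd} together with the uniform lower bound $H_i \ge p_{\min}(t) > 0$ from (\ref{Basic2}) gives $|\partial_t^j \log H_i(0, x)| \le c_j$ for each $j \ge 1$, independent of $i$ and $x$, so $|\partial_t^j \log M_n(0, x)| \le c_j n$. The complete Bell polynomial identity
\[
\frac{\partial_t^k M_n}{M_n} = B_k(\partial_t \log M_n, \partial_t^2 \log M_n, \ldots, \partial_t^k \log M_n)
\]
then yields $|\partial_t^k M_n / M_n| \le C_k n^k$, because every monomial $S_1^{m_1} \cdots S_k^{m_k}$ of $B_k$ satisfies $m_1 + \cdots + m_k \le \sum_j j\, m_j = k$.

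Part (ii) follows by termwise differentiation of (\ref{Lem1-2}). Extending part (i) uniformly to $t$ in a small neighborhood of $0$ where $p_{\max}(t) \le q < 1$, one obtains $|\partial_t^j M_n(t, x)| \le C_j' n^j q^n$ for $0 \le j \le k$, so all the derivative series up to order $k$ converge uniformly in $t$. This justifies term-by-term differentiation, defines $\partial_t^k F(0, x)$ for $x \notin D$ as $\sum_n X_{n+1}(x)(\partial_t^k M_n(0, x) - \partial_t^k M_{n+1}(0, x))$, and, by the same uniform estimate in $x$, shows that $\partial_t^k F(0, \cdot)$ is continuous on $[0, 1] \setminus D$.

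For part (iii), fix $x < y$ in $[0, 1] \setminus D$, let $m$ be the largest integer with $x_m = y_m$ (so $X_{m+1}(x) = 0$ and $X_{m+1}(y) = 1$), and set $y_{m+1} = x_m + 2^{-(m+1)}$. Decompose $[x, y)$ as a disjoint union of dyadic intervals via
\[
[x, y_{m+1}) = \bigsqcup_{N > m+1,\, X_N(x) = 0} [x_{N-1} + 2^{-N}, x_{N-1} + 2^{-(N-1)}), \quad [y_{m+1}, y) = \bigsqcup_{N > m+1,\, X_N(y) = 1} [y_{N-1}, y_{N-1} + 2^{-N}),
\]
with all intervals of level $N \ge m+2$. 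Telescoping along this decomposition, using continuity of $\partial_t^k F(0, \cdot)$ at $x$ and $y$ from part (ii), and applying (i) to each dyadic piece (so each telescoped difference contributes at most $C_k N_I^k \mu_0(I)$), I obtain
\[
|\partial_t^k F(0, y) - \partial_t^k F(0, x)| \le C_k \sum_I N_I^k \mu_0(I).
\]
Setting $n = \lfloor -\log_2|x-y|\rfloor$ and $K = \lceil \alpha(n+2)\rceil$ with $\alpha = \log p_{\min}/\log p_{\max} \ge 1$, I split the sum at $N_I = K$: the low-level part contributes at most $K^k \mu_0([x, y])$, and the high-level part is bounded by $D_k K^k p_{\max}^K$ via a geometric tail estimate (using $\mu_0(I) \le p_{\max}^{N_I}$ and at most two intervals per level). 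The lower bound $\mu_0([x, y]) \ge p_{\min}^{n+2}$, which holds because $y - x \ge 2^{-(n+1)}$ forces a level-$(n+2)$ dyadic interval inside $[x, y]$, yields $p_{\max}^K \le p_{\min}^{n+2} \le \mu_0([x, y])$ by the choice of $K$, absorbing the tail into the main term and giving (iii) with $K = O(-\log_2|x-y|)$.

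The main obstacle is the estimate in part (iii): the decomposition contains infinitely many intervals whose levels accumulate at $\infty$, so the bound $C_k N_I^k \mu_0(I)$ from (i) does not immediately sum to something controlled by $(-\log_2|x-y|)^k \mu_0([x, y])$. The resolution is the threshold/geometric-tail balance, which depends essentially on $|x-y| \ge 2^{-(n+1)}$ producing a level-$(n+2)$ dyadic interval in $[x, y]$ and hence the lower bound $\mu_0([x, y]) \ge p_{\min}^{n+2}$. Parts (i) and (ii) are comparatively routine once Lemma \ref{Bdd} and the standard Bell polynomial identities are in hand.
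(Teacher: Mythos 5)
Your proof is correct, and it follows the paper's route in parts (i) and (ii) while giving a genuinely more detailed argument than the paper does in part (iii). For (i), the paper expands $\partial_t^k M_n$ via the multi-index Leibniz formula $\partial_t^k M_n = \sum_{\sum k_j = k} C(k,(k_j))\prod_j \partial_t^{k_j}H_j$, observes that at most $k$ of the indices $k_j$ are nonzero so the product of ratios $\partial_t^{k_j}H_j/H_j$ is uniformly bounded by Lemma \ref{Bdd} and (\ref{Basic2}), and counts terms via (\ref{r}) to get $n^k$. Your complete-Bell-polynomial reformulation in terms of the derivatives of $\log M_n$ is the same combinatorics in a more compact package, with the weight condition $\sum_j j\,m_j = k$ playing the role of (\ref{r}); the inputs (Lemma \ref{Bdd}, the lower bound $H_i \ge p_{\min}$) are identical. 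Part (ii) matches the paper exactly: uniform geometric summability in a $t$-neighborhood where $p_{\max}(t) < 1$ justifies termwise differentiation of (\ref{Lem1-2}). Where you depart from the paper is part (iii). The paper's proof of (iii) consists of the assertion that $|f_k(x)-f_k(y)| = \lim_n|f_k(x_n)-f_k(y_n)|$ together with (\ref{M1-1}) yields (\ref{M1-C}), and does not explain how the single-dyadic-interval estimate is assembled into a bound for arbitrary $x \ne y$ when the dyadic tiling of $[x,y)$ contains pieces of unboundedly high level. Your telescoping over the dyadic decomposition, the threshold at level $K=\lceil \alpha(n+2)\rceil$ with $\alpha = \log p_{\min}/\log p_{\max}$, the geometric-tail estimate $\sum_{N\ge K} N^k p_{\max}^N = O(K^k p_{\max}^K)$, and the lower bound $\mu_0([x,y]) \ge p_{\min}^{n+2}$ (which holds because $|x-y| > 2^{-(n+1)}$ forces a full level-$(n+2)$ dyadic cell inside $[x,y]$) together supply exactly the bookkeeping the paper leaves implicit. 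The one imprecision worth flagging: your argument gives (\ref{M1-C}) with a constant proportional to $C_k\alpha^k$, not literally the $C_k$ of (\ref{M1-1}); the paper's phrase "Let $C_k$ be the constant above" should be read in the usual loose sense in which constants are allowed to grow.
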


Now we can define 
\[ f_k(x) := \partial_{t}^{k} F(0, x) \text{ and } \Delta_k F(x, y) := \frac{\partial_{t}^{k} F(0, x) - \partial_{t}^{k} F(0, y)}{F(0, x) - F(0, y)}, \, \,  x \ne y, \, k \ge 0.  \] 

By (\ref{M1-C}), $f_k$ is continuous and  if $\mu_{0}$ is absolutely continuous 
\begin{equation}\label{M1-C-G}  
\left|f_k(x) - f_k(y)\right| = O\left( |x-y|  \left(-\log_{2}|x-y|\right)^{k} \right).   
\end{equation}     
Whether (\ref{M1-C}) is best or not will be discussed after Theorem \ref{D1-new}. 
The key of the proof of (i) is giving an upper bound for $\left|\partial_{t}^{l} H_{j}(t, x) \right|$ uniform with respect to $x$ by Lemma \ref{Bdd}.   
For (ii), roughly speaking, the key is showing the exchangeability of the differential $\partial_t$ with the infinite sum in (\ref{Lem1-2}), by using (\ref{Basic2}). 
(iii) follows from (i) and (ii) easily.

\begin{proof}
By (\ref{Lem1-1})  
\[ \frac{\partial_{t}^{k} F(t, x_{n}+2^{-n}) - \partial_{t}^{k} F(t, x_{n})}{F(t, x_{n}+2^{-n}) - F(t, x_{n})} 
= \frac{\partial_{t}^{k} M_{n}(t, x)}{M_{n}(t, x)}. \] 

There exist positive integers $\left\{r(k, (k_{j})_{j}) : \sum_j k_j = k, k_j \ge 0 \right\}$ such that 
\begin{equation}\label{r} 
\sum_{k_{j} \ge 0, \sum_{j = 0}^{n-1} k_{j} = k} r(k, (k_{j})_j) = n^{k} \ \ \ \text{ and } 
\end{equation}  
\[ \partial_{t}^{k} M_{n}(t, x) = \sum_{k_{j} \ge 0, \, \sum_{j = 0}^{n-1} k_{j} = k} C(k, (k_{j})_{j})
\left(\prod_{j = 0}^{n-1} \partial_{t}^{k_{j}} H_{j}(t, x)\right). \]

We now compare $\partial_{t}^{k_{j}} H_{j}(t, x)$ with $H_{j}(t, x)$.  
Since the number of $j$ such that $k_{j} > 0$ is less than or equal to $k$, 
\[ 
\left|\prod_{j = 0}^{n-1}  \frac{\partial_{t}^{k_{j}} H_{j}(t, x)}{ H_{j}(t, x)}\right| 
= \left| \prod_{j : 0 < k_{j} \le k} \frac{\partial_{t}^{k_{j}} H_{j}(t, x)}{ H_{j}(t, x)} \right|
\le \left(\frac{ \max_{0 \le l \le k, j \ge 0, x \in [\alpha(t), \beta(t)]} \left|\partial_{t}^{l} H_{j}(t, x) \right|}{\min_{j \ge 0, x \in [\alpha(t), \beta(t)]} H_{j}(t, x)}\right)^{k}. 
\]

Lemma \ref{Bdd} implies  for each $l \ge 0$   
\[ \max_{j \ge 0, x \in [\alpha(t), \beta(t)]} \left|\partial_{t}^{l} H_{j}(t, x) \right| \le C_{1,l}(t) < +\infty. \]
By (\ref{r}) 
\begin{equation}\label{M-Fund}  
\left| \frac{\partial_{t}^{k}M_{n}(t, x)}{M_{n}(t,x)} \right|
\le \sum_{k_{j} \ge 0, \ \sum_{j = 0}^{n-1} k_{j} = k} r(k, (k_{j})_{j}) C_{4,k}(t) = C_{4,k}(t) n^{k}, 
\end{equation}  
where $C_{4,k}(t) := \max_{0 \le l \le k} C_{1,l}(t)$.   
This is continuous with respect to $t$.   
Thus we have (i). 

By (\ref{M-Fund}) and (\ref{Basic2})  
there is an open interval $(a, b)$ containing $0$
such that \[ \max_{t \in [a,b]} C_{4,k}(t) < +\infty, \ \ \max_{t \in [a, b]} p_{\text{max}}(t) < 1 \ \ \text{ and}\]   
\begin{equation}\label{upper} 
\sum_{n} \max_{t \in [a,b]} \left| \partial^k_t F(t,x_{n+1}) - \partial^k_t F(t,x_{n}) \right| 
\le \max_{t \in [a,b]} C_{4,k}(t) \cdot \sum_{n \ge 0} n^k \left(\max_{t \in [a, b]} p_{\text{max}}(t)\right)^n. 
\end{equation}  
Recall (\ref{Lem1-2}).  
Thus we have (ii).

(\ref{upper}) implies $\left| f_k(x) - f_k(y) \right| = \lim_{n \to \infty} \left| f_k(x_n) - f_k(y_n) \right|$. 
This and (\ref{M1-1}) imply (\ref{M1-C}). 
The continuity of $\partial^k_t F(0,x)$ with respect to $x$ 
follows from (\ref{M1-C}) and the continuity of $F(x)$.  
Thus we have (iii).
\end{proof} 

Hereafter, if $t = 0$ we often omit $t$ and write $F(x) = F(0,x)$ and $\widetilde F(x) = \widetilde F(0,x)$.

\section{Hausdorff dimension} 

\begin{Thm}\label{Haus}
For any $k \ge 1$, the Hausdorff dimension of the graph of $f_k$ is $1$. 
\end{Thm}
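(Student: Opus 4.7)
The plan is to bound the upper box-counting dimension $\overline{\dim}_B(\mathrm{graph}(f_k))$ by $1$, since this automatically forces $\dim_H(\mathrm{graph}(f_k)) \le 1$; the matching lower bound $\ge 1$ is immediate from the fact that $\mathrm{graph}(f_k)$ projects onto $[0,1]$. Concretely I will cover $\mathrm{graph}(f_k)$ at scale $2^{-n}$ by $N_n$ axis-parallel squares of side $2^{-n}$ and show $N_n = O(n^{k} 2^n)$.

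The crucial input is a uniform oscillation bound on each dyadic interval of length $2^{-n}$: for some constant $C = C(k)$,
\[
\sup_{x, y \in [x_n,\, x_n + 2^{-n}]} \bigl| f_k(x) - f_k(y) \bigr| \; \le \; C\, n^{k}\, M_n(0, x_n).
\]
To derive this I would write, for $x \in [x_n, x_n + 2^{-n}]$,
\[
f_k(x) - f_k(x_n) \; = \; \sum_{m \ge n} \bigl( f_k(x_{m+1}) - f_k(x_m) \bigr),
\]
note that each nonzero summand equals $f_k(x_m + 2^{-(m+1)}) - f_k(x_m)$, and apply (\ref{M1-1}) at level $m+1$ to the dyadic point $x_m$. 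The denominator produced is $F(x_m + 2^{-(m+1)}) - F(x_m) = M_{m+1}(0, y')$, where $y'$ shares the first $m$ digits with $x_m$ and has $X_{m+1}(y') = 0$, so $M_{m+1}(0, y') \le p_{\max}(0)\, M_m(0, x_m)$ by (\ref{Basic2}). Iterating $M_m(0, x_m) \le p_{\max}(0)^{m-n} M_n(0, x_n)$ turns the tail sum into a geometric series with polynomial prefactor, yielding the bound $O(n^{k} M_n(0, x_n))$.

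Given the oscillation estimate, covering $\mathrm{graph}(f_k)$ above $[x_n, x_n + 2^{-n}]$ takes at most $\lceil C n^{k} M_n(0, x_n) \cdot 2^n \rceil + 1$ squares of side $2^{-n}$. Summing over the $2^n$ dyadic intervals and using $\sum_{x_n} M_n(0, x_n) = \mu_0([0,1)) = 1$ from Lemma \ref{Basic-Lem} (iii), we obtain
\[
N_n \; \le \; C\, n^{k} \cdot 2^n + 2 \cdot 2^n,
\]
hence $\overline{\dim}_B(\mathrm{graph}(f_k)) \le \lim_n \log(C n^{k} 2^n)/(n \log 2) = 1$.

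The only nontrivial step is upgrading the endpoint estimate (\ref{M1-1}) to the full oscillation bound, and the geometric-tail argument sketched above is exactly what sidesteps a detailed modulus-of-continuity analysis, in line with the author's remark that this proof is simpler than those of \cite{MW} and \cite{AK1} because it does not require controlling the strength of continuity of $f_k$.
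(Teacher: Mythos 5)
Your argument is correct and uses a genuinely different route from the paper's, although both hinge on the same key observation that $F$ is a distribution function, so that $\sum_{l} M_n(0, (l-1)/2^n) = 1$ controls the total vertical oscillation at level $n$. The paper works directly with $s$-dimensional Hausdorff measure for each $s>1$: it covers the graph by rectangles whose heights are the actual oscillations $O(f_k,n,l)$, and shows $\sum_l O(f_k,n,l)^s \to 0$ by quoting (\ref{M1-C}), pulling out a factor $\max_{0<|x-y|\le 2^{-n}} (-\log_2|x-y|)^{sk}\,|F(x)-F(y)|^{s-1}$ using the normalization, and then killing that factor with a H\"older estimate $|F(z)-F(w)| \le 5\,|z-w|^{c}$ derived from $p_{\text{max}}(0)<1$. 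You instead bound the upper box-counting dimension, so you only need $N_n = O(n^k 2^n)$, and the polynomial prefactor is absorbed by the $\limsup$ in the exponent; consequently you require no H\"older estimate on $F$ at all. To get the explicit oscillation bound $O(f_k,n,l) = O(n^k M_n(0,(l-1)/2^n))$ you re-derive it by telescoping (\ref{M1-1}) along dyadic refinements and using the geometric decay $M_m \le p_{\text{max}}(0)^{m-n} M_n$ — this is the right move, since applying (\ref{M1-C}) alone would leave a supremum of a diverging logarithm against a vanishing increment of $F$, which is precisely the tension the paper resolves by the $s$-power trick that your argument avoids. The net trade-off: you replace the direct Hausdorff-measure computation by the general inequality $\dim_H \le \overline{\dim}_B$, and in return the proof is somewhat more elementary, not needing the extra continuity estimate for $F$. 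One cosmetic point: since the derived bound is one-sided, $|f_k(x)-f_k(x_n)| \le C n^k M_n$, the vertical extent over a dyadic interval is at most $2Cn^k M_n$ and your square count should carry an extra factor of $2$; this does not affect the conclusion.
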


This extends \cite[Corollary 4.2]{AK1} and is applicable to the framework in \cite[Section 5]{dA1}.    
If $f_1 = T$, 
this follows from \cite{MW}. 
For proof we will choose a ``good" family of coverings of the graph of $f_k$ and then show $\dim_{H} \{(x, f_k(x)) : x \in [0,1] \} \le s$ for any $s > 1$.
The key point is using the simple fact that $F$ is the distribution function of $\mu_0$.
Our proof is different from \cite{MW} and \cite{AK1} and seems simpler than them because we do not need to investigate strength of continuity of $f_k$ such as (\ref{M1-C-G}) and the H\"older exponent.    
As we will see in Theorem \ref{D2} (ii) later    
$f_k$ may not be $\eta$-H\"older continuous if $\eta < 1$ is sufficiently close to $1$.

\begin{proof}
Hereafter, ``$\dim_{H}$" denotes the Hausdorff dimension and ``$\text{diam}$" denotes the diameter. 
It is easy to see $\dim_{H} \{(x, f_k(x)) : x \in [0,1]\} \ge 1$.   
We now show  $\dim_{H} \{(x, f_k(x)) : x \in [0,1] \} \le s$ for any $s > 1$.  
Let 
\[ O\left(f_k, n, l\right) := \max_{x \in [(l-1)/2^{n}, l/2^{n}]} \left|f_k(x) - f_k\left(\frac{l-1}{2^{n}}\right)\right| \text{ and }\]
\[ R(k; n,l) :=  \left[\frac{l-1}{2^{n}}, \frac{l}{2^{n}}\right] \times \left[f_k\left(\frac{l-1}{2^{n}}\right) - O\left(f_k, n, l\right), \, f_k\left(\frac{l-1}{2^{n}}\right) + O\left(f_k, n, l\right)\right]. \]  

Then $\cup_{l=1}^{2^n} R(k; n,l)$ covers the graph of $f_k$ and 
\[ \text{diam}\left(R(k; n,l)\right) = (4^{-n} + 4O\left(f_k, n, l\right)^2)^{1/2}.\]    
If $s > 1$,  
\[ \left(4^{-n} + 4O\left(f_k, n, l\right)^2\right)^{s/2} \le \left(2^{-n} + 2O\left(f_k, n, l\right)\right)^{s} \le 2^{s-1} \left(2^{-sn} + 2^{s}O\left(f_k, n, l\right)^s\right). \]
Therefore it suffices to show that
\begin{equation}\label{Osc1}
\lim_{n \to \infty} \sum_{l = 1}^{2^n} O\left(f_k, n, l\right)^s = 0. 
\end{equation}  
By (\ref{M1-C}) 
\[  O\left(f_k, n, l\right) \le C_k \max_{x \in [(l-1)/2^{n}, l/2^{n}]} \left(-\log_2 \left|x-\frac{l-1}{2^{n}}\right| \right)^{k} \left(F(x) - F\left(\frac{l-1}{2^{n}}\right)\right). \]
Using this and \[ \displaystyle \sum_{l = 1}^{2^n} F\left(\frac{l}{2^{n}}\right) - F\left(\frac{l-1}{2^{n}}\right) = 1,\]     
\[ \sum_{l = 1}^{2^n} O\left(f_k, n, l\right)^s 
\le C_k^s \max_{x, y \in [0,1], 0 < |x-y| \le 2^{-n}} \left(-\log_2 |x-y|\right)^{sk} \left|F(x) - F(y)\right|^{s-1}.  \]

Let $z< w$ and $n= n_{z, w}$ be the smallest number $n$ such that $z \le (k-1)/2^n < k/2^n \le w$ for some $k$.   
Then $z \ge \min\{0, (k-3)/2^{n}\}$ and $w \le \max\{1, (k+2)/2^n\}$.   
By (\ref{Basic2}), $p_{\text{max}}(0) < 1$ and  $\max_{v \in (0,1)} \mu_0 ([v_n, v_n + 2^{-n})) \le p_{\text{max}}(0)^n$.   
Hence 
\[ F(y) - F(x) \le F\left(\max\{1, (k+2)/2^n\}\right) - F\left(\min\{0, (k-3)/2^{n}\}\right) \le 5 p_{\text{max}}(0)^n. \] 
Hence 
\begin{equation*}
|F(z) - F(w)| \le 5|z-w|^c, \, \, z, w \in [0,1], \ \ \text{  for $c = -\log_2 p_{\text{max}}(0) > 0$. }   
\end{equation*} 

Using this and $s > 1$,     
\[ \lim_{n \to \infty} \max_{x, y \in [0,1], 0 < |x-y| \le 2^{-n}} \left(-\log_2 |x-y|\right)^{sk} \left|F(x) - F(y)\right|^{s-1} = 0. \]
Thus we have (\ref{Osc1}).  
\end{proof}

\section{Local H\"older continuity at almost every points}

\begin{Thm}\label{SinguDiff} 
There is $c \ge 1$ such that 
for any $k \ge 0$ there is  $C_k^{\prime} < +\infty$ such that 
\[ \limsup_{h \to 0} \frac{|f_k(x+h) - f_k(x)|}{|h|^c} \le C_k^{\prime} \, \, \text{ Lebesgue-a.e.$x$.} \]   
If $\mu_{0}$ is singular, $c > 1$ and $C_k^{\prime} = 0$ for any $k$.    
If $\mu_0$ is absolutely continuous, $c=1$.  
\end{Thm}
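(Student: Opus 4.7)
My plan is to combine the pointwise estimate (\ref{M1-C}),
\[
|f_k(x) - f_k(y)| \le C_k (-\log_2 |x-y|)^k \, |F(x) - F(y)|,
\]
with an exponential decay rate for the dyadic masses $M_n(0, x) = F(x_n + 2^{-n}) - F(x_n)$ valid at Lebesgue-a.e.\ $x$. Under Lebesgue measure on $[0,1]$ the digits $(X_n)_n$ become i.i.d.\ Bernoulli$(1/2)$, so $Y_j := g_j(0, x)$ is a homogeneous Markov chain on $[\alpha, \beta]$ with transitions $y \mapsto G_0(0, y)$ and $y \mapsto G_1(0, y)$ each taken with probability $1/2$. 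The contraction-on-average condition (\ref{Ass}), which already drives Lemma \ref{Bdd}, produces a unique invariant probability $\nu$ on $[\alpha,\beta]$ and unique ergodicity via the standard random-IFS theorem.

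Applying Birkhoff's ergodic theorem to $\log H_j(0, x) = \log p_{X_{j+1}}(0, Y_j)$ then yields, for Lebesgue-a.e.\ $x$,
\[
\frac{1}{n}\log M_n(0, x) = \frac{1}{n}\sum_{j=0}^{n-1} \log H_j(0, x) \longrightarrow \frac{1}{2}\int \log\bigl(p_0(0,y)\,p_1(0,y)\bigr)\,d\nu(y) =: -\gamma \log 2.
\]
Because $p_0 + p_1 = 1$ forces $p_0\,p_1 \le 1/4$ with equality only at $p_0 = 1/2$, one always has $\gamma \ge 1$. To pin down $\gamma$, I will compare with the dyadic Lebesgue differentiation theorem for $\mu_0$ against $\lambda$: the ratio $M_n(0,x)/2^{-n}$ converges Lebesgue-a.e.\ to $d\mu_0^{\rm ac}/d\lambda(x)$. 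In the absolutely continuous case (which by \cite[Theorem 1.2]{O1} is characterised by $c_0 = 1/(2b_1) - 1$, $c_1 = 1 - 2b_1$) the density is positive on a positive-measure set, forcing $\gamma = 1$; in the singular case the ratio tends to $0$ a.e., so the Birkhoff value must satisfy $\gamma > 1$.

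For the singular case I then pick $c \in (1, \gamma)$ and $c' \in (c, \gamma)$. For Lebesgue-a.e.\ $x$ the bound $M_n(0, x) \le 2^{-n c'}$ holds for all large $n$; given $|h|$ small, choose $n$ with $2^{-n-1} < |h| \le 2^{-n}$ and observe that the segment between $x$ and $x+h$ meets at most two adjacent level-$n$ dyadic intervals, whence $|F(x+h) - F(x)| \le 2 \cdot 2^{-n c'} \le 4|h|^{c'}$. Plugging into (\ref{M1-C}),
\[
\frac{|f_k(x+h) - f_k(x)|}{|h|^c} \le 4 C_k (-\log_2 |h|)^k |h|^{c' - c} \xrightarrow[h \to 0]{} 0,
\]
so the limsup is $0$, yielding $C_k' = 0$. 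In the absolutely continuous case one takes $c = 1$; the finite density $d\mu_0/d\lambda(x)$ at Lebesgue-a.e.\ $x$ combined with (\ref{M1-C-G}) delivers a finite $C_k'$.

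The main obstacle is the strict inequality $\gamma > 1$ for singular $\mu_0$. Birkhoff only identifies $\gamma$ as a single integral, and $\gamma = 1$ would force $p_0(0, y) = 1/2$ for $\nu$-a.e.\ $y$. Since $p_0(0,y) = (y+1)/(y + b_1^{-1})$ equals $1/2$ only at the single point $y^* = b_1^{-1} - 2$, this forces $\nu = \delta_{y^*}$ and hence $y^*$ to be a common fixed point of $G_0$ and $G_1$. A direct computation of the fixed points of $G_0$ and $G_1$ shows that this common-fixed-point condition is equivalent precisely to the a.c.\ relations $c_0 = 1/(2b_1) - 1$ and $c_1 = 1 - 2b_1$ from \cite[Theorem 1.2]{O1}, contradicting singularity. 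This rigidity step is where the specific algebraic structure of the family genuinely enters.
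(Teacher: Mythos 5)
Your Birkhoff-plus-rigidity derivation of the a.e.\ exponential decay rate is an interesting and more self-contained alternative to the paper's approach, which simply invokes (\ref{Lem1-Leb}) obtained by adapting the proof of \cite[Theorem 1.2]{O1}; you should, however, check that (\ref{Ass}) really does give the average-contraction of the IFS $\{G_0(0,\cdot),G_1(0,\cdot)\}$ needed for unique ergodicity of $(g_n(0,\cdot))_n$, since only $G_0(0,\cdot)$ is guaranteed to be a strict contraction while $G_1(0,\cdot)$ need not be and (\ref{Ass}) is phrased through the dual system.

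The genuine gap is the step $|F(x+h)-F(x)|\le 2\cdot 2^{-nc'}$. The a.e.\ estimate $M_n(0,x)\le 2^{-nc'}$ controls the mass of the level-$n$ dyadic interval \emph{containing $x$} only. When $[x,x+h]$ crosses a dyadic boundary, the adjacent level-$n$ interval generally has a binary prefix differing from that of $x$ in many positions (a carry), and its $\mu_0$-mass is then completely uncontrolled by your a.e.\ bound: for instance, if the $n$-bit prefix of $x$ ends in a long run of $1$'s, the two adjacent masses are roughly $p_0p_1^{n-1}$ and $p_1p_0^{n-1}$, which are not comparable when $p_0\ne p_1$. So the factor of $2$ is unjustified, and one cannot infer $|F(x+h)-F(x)|\le 4|h|^{c'}$. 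This is exactly why the paper's proof does not argue at the scale $n\approx\log_2(1/|h|)$ directly, but instead works with the positions $m_k(x)$ of the $1$-digits of $x$, bounds $|F(x)-F(y)|\le C\,M_m(0,x)$ with $m=m_{n(x,y)}(x)$ chosen so that $x$ and $y$ share their first $m$ binary digits, and then uses \emph{normality of $x$} to get the lower bound $|x-y|\ge 2^{-m_{n(x,y)+2}(x)(1+o(1))}$ via $m_{k+2}(x)/m_k(x)\to1$. You can repair your argument along the same lines: let $n'\le n$ be the largest level at which $x$ and $x+h$ lie in the same dyadic interval, so that $|F(x+h)-F(x)|\le M_{n'}(0,x)\le 2^{-n'c'}$, and then show that for Lebesgue-a.e.\ $x$ the binary run-lengths are $O(\log n)$ (Borel--Cantelli), hence $n-n'=O(\log n)$; the resulting loss is absorbed by the strict inequality $c<c'<\gamma$. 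Without controlling $n-n'$, the argument does not close.
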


This is more general than \cite[Theorems 12 and 13]{dA1} which investigates the case $(b_1(t), c_0(t), c_1(t)) = (t+a,0,0)$, $a \ne 1/2$ only. 
Our approach is partly similar to the proof of \cite[Theorem 12]{dA1} but seems more general and clearer than it.  
The key point is showing the following : 
(1) Giving a nice upper bound for $|F(x) - F(y)|$ in terms of $M_{m}(0, x)$ by (\ref{Lem1-2}) and (\ref{Basic2}).       
(2) $M_{m}(0, x)$ decays rapidly by  (\ref{Lem1-Leb}) below. 
(3) Giving a nice lower bound for $|x-y|$ by assuming $x$ is a normal number as the proof of \cite[Theorem 12]{dA1}.

Let \begin{equation}\label{Def-m} 
\left\{m_{1}(z) < m_{2}(z) < \cdots \right\} := \left\{i \ge 1 : X_{i}(z) = 1 \right\}, \ \ z \in [0,1). 
\end{equation}    
\[ n(x, y) := \min\left\{n : m_{k}(x) = m_{k}(y)  \text{ for any } k \le n \right\}, \ \ x, y \in (0,1) \setminus D \text{ with } x \ne y.  \]

In a manner similar to the proof of  \cite[Theorem 1.2]{O1}\footnote{(\ref{Lem1-Leb}) is a statement for the {\it Lebesgue measure}. 
Hence   
we need to alter the arguments in the proofs of \cite[Lemma 2.3 (2) and Lemma 3.3]{O1} slightly. 
Since the alteration is easy we omit the details.},   
there is a constant $c \ge 1$ such that 
\begin{equation}\label{Lem1-Leb}
\liminf_{n \to \infty} \frac{-\log_2 M_n(0, x)}{n} \ge c \ \  \text{ Lebesgue-a.e.$x$.}  
\end{equation}
If $\mu_0$ is singular, $c > 1$.  
If $\mu_0$ is absolutely continuous, $c=1$.

\begin{proof} 
This assertion is trivial if $\mu_0$ is absolutely continuous. 
Assume $\mu_0$ is singular and $x$ is a normal number. 
Using (\ref{M1-C}), it suffices to show 
\begin{equation}\label{SD1} 
\lim_{h \to 0} \frac{|F(x+h) - F(x)|}{|h|^c} = 0  \ \ \ \text{ Lebesgue-a.e.$x$ \ for some $c > 1$.} 
\end{equation}
Let $y \in (0,1) \setminus (D \cup \{x\})$ and Let $m := m_{n(x, y)}(x)$.  
Then $X_k(x) = X_k(y)$ and  
$M_{k}(0, x) = M_{k}(0, y)$ for any $k \le m_{n(x,y)}(x)$. 
By (\ref{Lem1-2}) and (\ref{Basic2}), 
\begin{align}\label{SD2} 
\left|F(x) - F(y)\right| \le \sum_{i \ge m} \left| M_{i}(0, x) -  M_{i}(0, y) \right|  \le C M_{m}(0,x).    
\end{align} 
Here $C$ denotes a constant independent from $x, y$.  

We now give a lower bound of $|x-y|$ in terms of $n(x, y)$.  
If $x > y$,   $m_{n(x,y)+1}(x) < m_{n(x,y)+1}(y)$ and hence 
\[ x - y \ge 2^{-m_{n(x,y) + 2}(x)}.\]     
If $x < y$, $m_{n(x,y)+1}(x) > m_{n(x,y)+1}(y)$ and hence 
\[ y - x \ge 2^{-m_{n(x,y)+1}(x)}  - \sum_{j \ge n(x,y) + 2} 2^{-m_{j}(x)}.\]

Since $x$ is normal, 
\begin{equation}\label{xy}
|x-y| \ge 2^{-m_{n(x, y) + 2}(x) \cdot (1+o(1))}, \, y \to x. 
\end{equation}

By (\ref{Lem1-Leb}) and $\displaystyle\lim_{k \to +\infty} \dfrac{m_{k+2}(x)}{m_k(x)} = 1$, 
there is $c>1$ such that 
\begin{equation}\label{Lem1-Leb-2} 
\lim_{k \to \infty} 2^{c \cdot m_{k+2}(x)} M_{m_k(x)}(0, x) = 0 
\end{equation}   
holds for Lebesgue-a.e. normal number $x$.   
We also have 
$\lim_{y \to x} n(x, y) = +\infty$.   
Now (\ref{SD1}) follows from (\ref{SD2}), (\ref{xy}) and (\ref{Lem1-Leb-2}).          
\end{proof}


\section{Asymptotics of $f_k$ around dyadic rationals} 

\subsection{Lemmas}

Recall the definition of $g_i$, $P_i$ and $H_i$ in Definition \ref{Def-many}.  
Then 
\begin{equation}\label{Lem2-1} 
\partial_t P_{i}(t, x) 
= \frac{b_{1}^{\prime}(t)(g_{i}(t, x)+1) + b_{1}(t)(1-b_{1}(t))\partial_{t}g_{i}(t, x)}{\left(b_{1}(t)g_{i}(t, x) + 1\right)^2}.  
\end{equation} 

Let \[ D_n := \left\{\frac{k}{2^{n}} : 1 \le k \le 2^{n}-1 \right\}, \ \ n \ge 1 \ \text{  and }  \ D_0 := \emptyset. \]

\begin{Lem}\label{H-exp-fast}
\textup{(i) }\begin{equation}\label{H-exp-fast-right}
\lim_{i \to \infty} \sup_{y \in D_k \setminus D_{k-1}, k \ge 1} \left| H_{i+k}(0, y)  - b_1(1+c_0)\right| = 0.  
\end{equation} 
This also holds if we substitute $\widetilde H_{i+k}, \widetilde b_1 \text{ and } \widetilde c_0$ for $H_{i+k}, b_1, \text{ and } c_0$.\\     
\textup{(ii) } If $x \in D$,    
\begin{equation}\label{H-exp-fast-partial} 
\lim_{n \to \infty} \frac{\partial_t H_{n}(0,x)}{H_n(0, x)} = \frac{b_1^{\prime}(0)}{b_1} + \frac{c_0^{\prime}(0)}{1+c_0} 
\end{equation}   
Convergences (\ref{H-exp-fast-right}) and (\ref{H-exp-fast-partial}) are exponentially fast. 
\end{Lem}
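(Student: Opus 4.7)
The plan is to reduce both parts to the iteration of the affine map $G_{0}(0, \cdot) : y \mapsto b_{1}(1+c_{0}) y + c_{0}$ along the trailing zeros of a terminating dyadic expansion.

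For part (i), the key observation is that any $y \in D_{k} \setminus D_{k-1}$ has, in its terminating expansion, $X_{k}(y) = 1$ and $X_{n}(y) = 0$ for all $n > k$; consequently $H_{n}(0, y) = p_{0}(0, g_{n}(0, y))$ for $n \ge k$ and $g_{n}(0, y) = G_{0}(0, g_{n-1}(0, y))$ for $n \ge k+1$. Since the contraction factor $\lambda := b_{1}(1+c_{0})$ lies in $(0,1)$ by (\ref{Cond}), the linear recursion admits the explicit solution
\[ g_{k+i}(0, y) - y^{*} = \lambda^{i}(g_{k}(0, y) - y^{*}), \qquad y^{*} := \frac{c_{0}}{1-\lambda}, \]
and $g_{k}(0, y) \in [\alpha(0), \beta(0)]$ makes the bound uniform in $k$ and $y$. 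Composing with the Lipschitz function $p_{0}(0, \cdot)$ then yields uniform exponential convergence $H_{k+i}(0, y) \to p_{0}(0, y^{*})$, and a direct computation using the identity $1 - \lambda + b_{1} c_{0} = 1 - b_{1}$ gives $p_{0}(0, y^{*}) = b_{1}(1+c_{0})$. The dual statement follows by applying the same argument to $(\widetilde{A}_{0}, \widetilde{A}_{1})$ from Definition \ref{Dual-Def}.

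For part (ii), I would take $x \in D_{k} \setminus D_{k-1}$ and apply the same reduction, so that $H_{n}(0, x) = P_{n}(0, x)$ for $n \ge k$ and (\ref{Lem2-1}) at $t=0$ yields a closed form for $\partial_{t} H_{n}(0,x)/H_{n}(0,x)$ in terms of $g_{n}(0, x)$ and $\partial_{t} g_{n}(0, x)$. Differentiating $g_{n}(t, x) - y^{*}(t) = \lambda(t)^{n-k}(g_{k}(t, x) - y^{*}(t))$ with respect to $t$ and evaluating at $t=0$ shows that $\partial_{t} g_{n}(0, x) \to (y^{*})'(0)$ exponentially fast, since the prefactors $\lambda^{n-k}$ and $(n-k)\lambda^{n-k-1}$ both decay exponentially. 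Passing to the limit in (\ref{Lem2-1}) using the known limits of $g_{n}$ and $\partial_{t} g_{n}$ is then the remaining task.

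The main obstacle will be the algebraic verification that this limit collapses to the simple form $b_{1}'(0)/b_{1} + c_{0}'(0)/(1+c_{0})$. Writing $\delta := 1 - \lambda$ and substituting $y^{*} = c_{0}/\delta$ together with the expression for $(y^{*})'(0)$, repeated use of the identity $\delta + b_{1} c_{0} = 1 - b_{1}$ should factor the numerator as $(1-b_{1})^{2}(b_{1}'(0)(1+c_{0}) + b_{1} c_{0}'(0))/\delta^{2}$ and the denominator as $b_{1}(1-b_{1})^{2}(1+c_{0})/\delta^{2}$, yielding the claimed value; the exponential rate in (\ref{H-exp-fast-partial}) then inherits from the rates of $g_{n} \to y^{*}$ and $\partial_{t} g_{n} \to (y^{*})'(0)$.
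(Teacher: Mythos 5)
Your proposal is correct and follows essentially the same strategy as the paper: identify the trailing-zeros tail of the terminating dyadic expansion, reduce to iterating $G_{0}(0,\cdot)$, use its contraction to get exponential convergence of $g_{n}(0,\cdot)$ (and of $\partial_{t} g_{n}(0,\cdot)$), and then pass to the limit in (\ref{Lem2-1}). The one genuine difference is that you observe $G_{0}(t,\cdot)$ is affine, $y\mapsto b_{1}(t)(1+c_{0}(t))y+c_{0}(t)$, so the orbit admits the closed form $g_{k+i}(t,\cdot)-y^{*}(t)=\lambda(t)^{i}(g_{k}(t,\cdot)-y^{*}(t))$, which you then differentiate in $t$; the paper instead keeps $G_{0}$ abstract and uses only its Lipschitz contraction (\ref{G-conv}) together with the derivative recursion (\ref{eq-g}). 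Your version makes the exponential rate and the fixed-point algebra (e.g.\ $p_{0}(0,y^{*})=b_{1}(1+c_{0})$ via $1-\lambda+b_{1}c_{0}=1-b_{1}$) fully explicit, at the small cost of relying on the affine structure of $A_{0}$, whereas the paper's recursion-based argument would also carry over unchanged to $\widetilde G_{0}$ without recomputing anything; both routes are sound and give the same conclusion.
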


\begin{proof}
Recall the definition of $G_i$ in (\ref{Def-G}).    
Let $G_{0, i}$ be the $i$-th composition of $G_{0}(0, \cdot)$.  
Since the Lipschitz constant of $G_0(0, \cdot)$ on $[\alpha, \beta]$ is strictly smaller than $1$,   
\begin{equation}\label{G-conv}
\lim_{i \to \infty} \sup_{z \in [\alpha, \beta]} \left| G_{0, i}(z) - \frac{c_0}{1-b_1(1+c_0)} \right| = 0.  
\end{equation}    
This convergence is exponentially-fast. 
If $y \in D_k \setminus D_{k-1}$, $H_{i+k}(0, y) = p_0\left(0, G_{0, i}(g_k(0, y))\right)$.  
Hence (\ref{H-exp-fast-right}) holds and the convergence is exponentially fast. 
Since the Lipschitz constant of $\widetilde G_0(0, \cdot)$ on $\left[\widetilde \alpha, \widetilde \beta\right]$ strictly smaller than $1$,   
(\ref{H-exp-fast-right}) holds for $\widetilde H_{i+k}, \widetilde b_1 \text{ and } \widetilde c_0$.   
Thus we have (i).    

We have 
\begin{equation}\label{eq-g} 
\partial_{t} g_{i}(t, x) 
= \partial_{t} G_{X_{i}(x)}\left(t, g_{i-1}(t, x)\right) + \partial_{y} G_{X_{i}(x)}\left(t, g_{i-1}(t, x)\right) \partial_{t} g_{i-1}(t, x). 
\end{equation} 

Note $X_n(x) = 0$ for large $n$.  
By (\ref{G-conv}) and (\ref{eq-g}) 
\begin{equation*}
\lim_{n \to \infty} g_n(0,x) = \frac{c_0}{1-b_1(c_0 + 1)} \text{ \ \ \ exponentially fast \  and }   
\end{equation*}
\begin{equation*}
\lim_{n \to \infty} \partial_t g_n(0,x) = \frac{\partial_{t}G_{0}\left(0, \frac{c_{0}}{1-b_{1}(c_{0}+1)}\right)}{1 - \partial_{y}G_{0}\left(0, \frac{c_{0}}{1-b_{1}(c_{0}+1)}\right)} \text{ \ \ exponentially fast.}   
\end{equation*}
Using these convergences, (\ref{Lem2-1}) and  (\ref{H-exp-fast-right}), we have (ii).   
\end{proof}

\subsection{Non-degenerate condition}

If all of $b_{1}(t), c_{0}(t)$ and $c_{1}(t)$ are constant, $f_k(x) = 0$ for {\it any} $x \in [0,1]$ and $k \ge 1$.  
In this case the estimate in (\ref{M1-C}) is not best.  
We now introduce a ``non-degenerate" condition for the curves and consider the estimate in (\ref{M1-C}) is best or not under the condition. 

\begin{Def}[A non-degenerate condition]\label{ND}
We say {\it (ND) holds}  
if 
\begin{equation}\label{ND-1}
b_{1}^{\prime}(0)(\alpha +1) + b_{1}(1-b_{1}) \min\{0, \delta_{0}, \delta_{1}\} > 0  
\text{ or }
\end{equation} 
\begin{equation}\label{ND-2}
(\widetilde b_{1})^{\prime}(0)(\widetilde\alpha +1) + \widetilde b_{1}(1-\widetilde b_{1})\max\{0, \widetilde\delta_{0}, \widetilde\delta_{1}\} < 0,  
\end{equation}  
\[ \text{where } \ \ \delta_{i} := \min_{y \in \left[\alpha, \beta\right]}  \frac{\partial_{t}G_i(0, y)}{1- \partial_{y}G_i(0, y)}, \,  \, 
\widetilde\delta_{i} := \max_{y \in \left[\widetilde\alpha, \, \widetilde\beta \right]}  \frac{\partial_{t} \widetilde G_i(0, y)}{1- \partial_{y}\widetilde G_i(0, y)},  \, \, i=0,1.\]
Recall (\ref{Def-G}) for the definitions of $G_i$ and $\widetilde G_i$.   
Both $\delta_0$ and $\widetilde \delta_0$ are well-defined. 
On the other hand either $\delta_{1}$ or $\widetilde \delta_{1}$ is well-defined. 
\end{Def}

By this condition the derivative of $F(t, x_n + 2^{-n}) - F(t, x_n)$ with respect to $t$ is positive at $t=0$. 
In particular $f_k$ is {\it not} a constant. 
See Lemma \ref{ND-Lemma} for details.  
If $\gamma(t)$ is a smooth curve with $\gamma(0) = 0$ and $\gamma^{ \prime}(0) > 0$, 
(ND) holds for $(b_{1}(t), c_{0}(t), c_{1}(t))$ 
if and only if it also holds for $\left(b_{1}(\gamma(t)), c_{0}(\gamma(t)), c_{1}(\gamma(t))\right)$.   

This condition is somewhat complex. 
However (ND) holds for $T$ and its generalizations in \cite{AK1},  \cite{AK-JMAA}, \cite{dA1} etc.
If $c_{1}(t) = c_{2}(t) = 0$ for any $t$, $\delta_{0}(t) = \delta_{1}(t) = \widetilde\delta_{0}(t) = \widetilde\delta_{1}(t) = 0$ and hence (ND) holds if and only if $b_{1}^{\prime}(0) > 0$.
Hereafter we will not use (ND) explicitly.  
Instead the following will be used.   

\begin{Lem}\label{ND-Lemma} 
If  (ND) holds, 
\begin{equation}\label{ND-Lem-1}
\inf_{n \ge 0, x \in [0,1]}  \partial_t P_{n}(0, x) > 0.  
\end{equation} 
\end{Lem}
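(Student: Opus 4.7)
The plan is to analyze the explicit formula (\ref{Lem2-1}) for $\partial_t P_n(t, x)$ at $t = 0$. Since $g_n(0, x) \in [\alpha, \beta]$ uniformly in $n, x$ by (\ref{Basic1}), the denominator $(b_1 g_n(0, x) + 1)^2$ is bounded between two positive constants uniformly. Hence it suffices to produce a uniform positive lower bound for the numerator
\[ N_n(x) := b_1'(0)\bigl(g_n(0, x) + 1\bigr) + b_1(1 - b_1)\, \partial_t g_n(0, x). \]
Observe that (\ref{ND-1}) forces $b_1'(0) > 0$: the second summand there is $\leq 0$ while $\alpha + 1 > 0$. Similarly (\ref{ND-2}) forces $(\widetilde b_1)'(0) < 0$, hence again $b_1'(0) > 0$.

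The main step is a uniform a priori estimate
\[ \partial_t g_n(0, x) \geq m := \min\{0, \delta_0, \delta_1\}, \quad n \geq 0, \ x \in [0, 1), \]
proved by induction on $n$ via the recurrence (\ref{eq-g}). The base case is immediate since $g_0 \equiv 0$ and $m \leq 0$. For the inductive step, at $t = 0$ write $\partial_t g_n(0, x) = A + B\,\partial_t g_{n-1}(0, x)$ with $A = \partial_t G_{X_n(x)}(0, g_{n-1}(0, x))$ and $B = \partial_y G_{X_n(x)}(0, g_{n-1}(0, x))$. The positive-determinant condition on $A_0, A_1$ gives $B > 0$, and the first alternative of (\ref{Ass}) — the one active under (ND-1), since it is precisely what makes $\delta_1$ well-defined — gives $B < 1$. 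By definition of $\delta_{X_n(x)}$ as a minimum over $[\alpha, \beta]$,
\[ A \geq (1 - B)\, \delta_{X_n(x)} \geq (1 - B)\, m, \]
so combining with the inductive hypothesis $\partial_t g_{n-1}(0, x) \geq m$ and $B > 0$ yields $\partial_t g_n(0, x) \geq (1 - B) m + B m = m$.

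Under (ND-1), $g_n(0, x) + 1 \geq \alpha + 1 > 0$ and $b_1'(0) > 0$ together with the estimate above give $N_n(x) \geq b_1'(0)(\alpha + 1) + b_1(1 - b_1)\, m$, which is strictly positive by (\ref{ND-1}); dividing by the uniformly bounded denominator produces (\ref{ND-Lem-1}). Under (ND-2), run the symmetric induction on the dual data — using the second alternative of (\ref{Ass}), under which $\widetilde \delta_1$ is well-defined — to obtain the upper bound $\partial_t \widetilde g_n(0, x) \leq \widetilde M := \max\{0, \widetilde\delta_0, \widetilde\delta_1\}$. Together with $(\widetilde b_1)'(0) < 0$ and the analogue of (\ref{Lem2-1}) for $\partial_t \widetilde P_n$, this gives $\sup_{n, x} \partial_t \widetilde P_n(0, x) < 0$. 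Translate back via (\ref{Dual}): conditioning on the first $n$ digits of $1 - x$ under $\widetilde \mu_t$ corresponds to conditioning on the first $n$ digits of $x$ under $\mu_t$ with the roles of $0$ and $1$ swapped, so $\widetilde P_n(t, 1 - x) = 1 - P_n(t, x)$ for non-dyadic $x$, and differentiating in $t$ gives $\inf_{n, x} \partial_t P_n(0, x) > 0$.

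The main obstacle lies in the (ND-2) case — specifically, establishing the identity $\widetilde P_n(t, 1 - x) = 1 - P_n(t, x)$ cleanly from (\ref{Dual}) and tracking signs through the dual involution. The inductive estimate on $\partial_t g_n$ and the algebra in the (ND-1) case are routine.
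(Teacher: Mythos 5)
Your proof is correct and follows essentially the same route as the paper: a monotonicity argument via the recurrence (\ref{eq-g}) to get the uniform bound $\partial_t g_n(0,x) \ge \min\{0,\delta_0,\delta_1\}$, fed into the explicit formula (\ref{Lem2-1}), with the (ND-2) case handled by passing to the dual through (\ref{Dual}). You simply fill in the algebraic details (the roles of $A$, $B$, the positivity of $b_1'(0)$, and the uniform bounds on the denominator) that the paper leaves implicit.
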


\begin{proof}
First assume (\ref{ND-1}). 
Recall (\ref{eq-g}). 
If $\eta \le \min\{\delta_{0}, \delta_{1}\}$ and $\partial_{t} g_{i-1}(0, x) \ge \eta$ then \[ \partial_{t} g_{i}(0, x) \ge \eta.\] 
Since $\partial_{t} g_{0}(0, x) = 0$, 
\[ \partial_{t} g_{i}(0, x) \ge \min\left\{0, \delta_{0}, \delta_{1}\right\},   i \ge 0.\]  
Using (\ref{Basic1}), (\ref{Lem2-1}) and  (\ref{ND-1}), 
we have (\ref{ND-Lem-1}).  

Second assume (\ref{ND-2}).     
Recall (\ref{Dual}). Then   
\begin{equation*}
\widetilde P_k(t, x) = 1 - P_k(t, 1-x), \, \, x \in (0,1) \setminus D_k, \, k \ge 1. 
\end{equation*}
\begin{equation*}
\sup_{n \ge 0, x \in [0,1)} \partial_t \widetilde P_{n}(0, x) < 0.  
\end{equation*} 
(\ref{ND-Lem-1}) follows from these claims. 
\end{proof}

\subsection{Comparing $ \Delta_k F(x, x+h)$ with $(\log_2 (1/|h|))^k$ at dyadic rationals}

\begin{Thm}\label{D1-new}
For any $k \ge 0$ and any $x \in D$, 
\begin{equation}\label{D1-new-1} 
\lim_{h \to 0, h > 0} \frac{ \Delta_k F(x, x+h)}{(\log_2 (1/|h|))^k} = \left( \frac{b_1^{\prime}(0)}{b_1} + \frac{c_0^{\prime}(0)}{c_0 + 1} \right)^{k}. 
\end{equation} 
\begin{equation}\label{D1-new-2} 
\lim_{h \to 0, h < 0}  
\frac{ \Delta_k F(x, x+h)}{(\log_2 (1/|h|))^k} = \left(- \frac{b_1^{\prime}(0)}{b_1} - \frac{c_1^{\prime}(0)}{c_1 + 1} \right)^{k}. 
\end{equation}  
\end{Thm}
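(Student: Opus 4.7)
The plan is to prove (\ref{D1-new-1}) first and derive (\ref{D1-new-2}) by duality via (\ref{Dual}). Fix $x \in D$ of dyadic depth $N$ and set $\lambda_+ := b_1^\prime(0)/b_1 + c_0^\prime(0)/(c_0+1)$. For the dyadic values $h = 2^{-n}$ with $n \geq N$, the identity derived in the proof of Theorem \ref{M1} gives $\Delta_k F(x, x+2^{-n}) = \partial_t^k M_n(0, x)/M_n(0, x)$. Applying the Fa\`a di Bruno identity to $M_n = \exp(\log M_n)$ expresses this ratio as the complete Bell polynomial $B_k(L_1, \dots, L_k)$ evaluated on $L_i(n) := \sum_{j=0}^{n-1} \partial_t^i \log H_j(0, x)$. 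Lemma \ref{H-exp-fast}(ii), extended to higher $i$ by an induction using (\ref{eq-g}) and the contraction $\partial_y G_0(0, c_0/(1-b_1(c_0+1))) = a_0 < 1$, shows that $\partial_t^i \log H_j(0, x)$ converges exponentially fast to a limit $\kappa_i$ as $j \to \infty$, with $\kappa_1 = \lambda_+$. Hence $L_i(n) = n \kappa_i + O(1)$; since the leading monomial of $B_k$ is $L_1^k$, one obtains $\partial_t^k M_n(0, x)/M_n(0, x) = n^k \lambda_+^k + O(n^{k-1})$, which proves (\ref{D1-new-1}) along $h = 2^{-n}$.

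For general $h > 0$ small, define $m = m(h)$ by $2^{-m} \leq h < 2^{-(m-1)}$, so that $m = \log_2(1/h) + O(1)$. Writing $y := x + h$ and using that the first $m-1$ dyadic digits of $y$ coincide with those of $x$, (\ref{Lem1-2}) telescopes to
\begin{equation*}
F(y) - F(x) = \sum_{\substack{n \geq m-1 \\ X_{n+1}(y)=1}} S_n(0,y), \qquad f_k(y) - f_k(x) = \sum_{\substack{n \geq m-1 \\ X_{n+1}(y)=1}} \partial_t^k S_n(0,y),
\end{equation*}
where $S_n(t, y) := M_n(t, y) p_0(t, g_n(t, y))$. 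The $n = m-1$ summand equals $M_m(t, x)$ as a function of $t$, and by the dyadic case its relative $\partial_t^k$-ratio is $m^k \lambda_+^k (1 + o(1))$. For $n \geq m$, decompose $\partial_t^i \log S_n(0, y)$ as the contribution from indices $j \leq m-2$ (which equals $(m-1) \kappa_i + O(1)$ by Lemma \ref{H-exp-fast}(ii), since the first $m-1$ digits of $y$ are those of $x$) plus $O(n-m+1)$ bounded terms coming from $j \in [m-1, n-1]$ and from the $p_0$ factor. Thus $L_1^{S_n} = m \lambda_+ + O(n-m+1)$, and the same Bell polynomial estimate gives
\begin{equation*}
\partial_t^k S_n(0, y)/S_n(0, y) = m^k \lambda_+^k + O\!\left(m^{k-1}(n-m+1) + (n-m+1)^k\right).
\end{equation*}

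Given $\epsilon > 0$, choose $\delta > 0$ so small that the preceding estimate yields $|\partial_t^k S_n/S_n - m^k \lambda_+^k| \leq C m^k \epsilon$ uniformly for $n \in [m-1, (1+\delta)m]$. For $n > (1+\delta)m$, the geometric bound $S_n(0, y) \leq C p_{\max}(0)^{n-m} S_{m-1}(0, y)$, combined with $|\partial_t^k S_n| \leq C n^k S_n$ from (\ref{M-Fund}), renders the contribution from this far range exponentially negligible in $m$. Summing over the two ranges and dividing by $F(y) - F(x) \geq S_{m-1}(0, y) > 0$ gives $\Delta_k F(x, y) = m^k \lambda_+^k (1 + O(\epsilon))$; letting $h \to 0$ (so $m \to \infty$), then $\epsilon \to 0$, and using $m/\log_2(1/h) \to 1$ proves (\ref{D1-new-1}). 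For (\ref{D1-new-2}), the relation $\Delta_k F(x, x+h) = \widetilde\Delta_k F(1-x, (1-x) + |h|)$ that follows from (\ref{Dual}) reduces the $h < 0$ case to (\ref{D1-new-1}) applied to $\widetilde F$ at the dyadic point $1-x$; expressing the resulting limit $\widetilde\lambda_+^k$ in the original parameters via (\ref{dual-Def}) yields the stated form.

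The main obstacle is the uniform tail estimate for $n \geq m$: the Bell-polynomial error $O(m^{k-1}(n-m+1))$ is \emph{not} $o(m^k)$ when $n - m \asymp m$, so one cannot simply sum the per-term errors against $S_n$. Overcoming this requires the $\epsilon$--$\delta$ splitting above, which balances the polynomially growing deviation $(n-m+1)^k$ of the near tail against the geometric decay $p_{\max}(0)^{n-m}$ of $S_n$ in the far tail.
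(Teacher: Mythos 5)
Your proposal is essentially correct, and for the core dyadic estimate it takes a genuinely different route from the paper. The paper proves Proposition~\ref{D1} by differentiating the one-step recursion for $M_n$, obtaining the identity (\ref{Z-vs-Z}) $Z_{l,n+1} = Z_{l,n} + l(\partial_t P_n/P_n)Z_{l-1,n} + O(n^{l-2})$, and then running an induction on $k$ using only (\ref{H-exp-fast-partial}) for the $i=1$ derivative and boundedness (Lemma~\ref{Bdd}) for $i\ge 2$. You instead apply Fa\`a di Bruno to $\partial_t^k e^{\log M_n}$ and read off the leading monomial $L_1^k$ of the complete Bell polynomial, with $L_1(n) = n\lambda_+ + O(1)$ and $L_i(n) = O(n)$ for $i\ge 2$. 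Both work; your route is arguably more direct and avoids bookkeeping the recursion, but note that you do \emph{not} actually need exponential convergence of $\partial_t^i \log H_j$ to a limit $\kappa_i$ for $i\ge 2$ --- boundedness (already in Lemma~\ref{Bdd}) suffices, since all lower-weight Bell monomials are $O(n^{k-1})$ anyway. So your proposed extension of Lemma~\ref{H-exp-fast}(ii) to all $i$ is superfluous.

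For general $h>0$ the two proofs use literally the same decomposition: your $S_{m_i(h)-1}(0,y)$ is exactly $F((x+h)_{m_i(h)}) - F((x+h)_{m_{i-1}(h)})$ and your ratio $\partial_t^k S_n/S_n$ at $n=m_i(h)-1$ is exactly the paper's $Z_{k,m_i(h)}((x+h)_{m_{i-1}(h)})$. However, the ``main obstacle'' you describe at the end is not actually one: the per-term error is polynomial in $n-m+1$ while the weight $S_n/\sum S_{n'}$ decays like $p_{\max}(0)^{n-m}$, so
\[
\sum_{n\ge m-1} p_{\max}(0)^{n-m}\left(m^{k-1}(n-m+1) + (n-m+1)^k\right) = O(m^{k-1}) = o(m^k),
\]
and one \emph{can} simply sum the per-term errors. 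This is precisely what the paper does (summing $m_1(h)^{k-1}(m_i(h)-m_1(h))^k$ against geometric weights); your $\epsilon$--$\delta$ splitting is valid but unnecessary extra machinery.

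Finally, there is a genuine gap in your treatment of (\ref{D1-new-2}). Carrying out the substitution (\ref{dual-Def}) honestly gives
\[
\widetilde\lambda_+ = \frac{\widetilde b_1^{\prime}(0)}{\widetilde b_1} + \frac{\widetilde c_0^{\prime}(0)}{\widetilde c_0 + 1} = -\frac{b_1^{\prime}(0)}{1-b_1} - \frac{c_1^{\prime}(0)}{1+c_1},
\]
which does \emph{not} equal the stated $-b_1^{\prime}(0)/b_1 - c_1^{\prime}(0)/(c_1+1)$ unless $b_1 = 1/2$. A direct check with $b_1(t)=t+1/3$, $c_0=c_1=0$, $x=1/2$ gives $\Delta_1 F(1/2,1/2-2^{-n}) = \frac{1}{a}-\frac{n-1}{1-a}$, so the limit is $-1/(1-a) = -3/2$, confirming the $(1-b_1)^{-1}$ version. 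So the claim ``yields the stated form'' is not correct as written; the duality reduction is the right idea, but it produces a formula that differs from (\ref{D1-new-2}) (the paper's own passage from $\widetilde Z_{k,n}$ to $(-q_2)^k$ in Proposition~\ref{D1} contains the same slip). You should either flag the discrepancy or state the corrected constant.
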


If (ND) holds, $b_1^{\prime}(0)/b_1 + c_i^{\prime}(0)/(c_i + 1)$, $i=0,1$, above are positive and hence 
we can not replace $\left(-\log_{2}|x-y|\right)^{k}$ with smaller functions in (\ref{M1-C}). 
This extends Kr\"uppel \cite[Proposition 3.2]{Kr}.  
\cite[Theorem 4.1]{AK1} follows from this and (\ref{M1-C}).    

For proof first consider the asymptotic of $\Delta_k F(x, x+2^{-n})$ as $n \to \infty$ as in (\ref{induction}) below. 
We will show this by induction on $k$ and Lemma \ref{H-exp-fast} (ii).  
Then replace $``2^{-n}"$ in $\Delta_k F(x, x+2^{-n})$ with $h > 0$.

\begin{Def}
Let 
\begin{equation}\label{Z-Def}
Z_{k, n}(x) :=  \Delta_{k} F(x_n, x_n + 2^{-n}), \, x \in [0,1), n \ge 1, k \ge 0.  
\end{equation}
Define $\widetilde Z_{k, n}$ by substituting $\widetilde F$ for $F$. 
\end{Def}

\begin{Prop}\label{D1}
For any $k \ge 0$ and any $x \in D$,
\begin{equation}\label{D1-1} 
\lim_{n \to \infty} \frac{ \Delta_k F(x, x+2^{-n})}{n^k} = \left( \frac{b_1^{\prime}(0)}{b_1} + \frac{c_0^{\prime}(0)}{c_0 + 1} \right)^{k}. 
\end{equation} 
\begin{equation}\label{D1-2} 
\lim_{n \to \infty}  
\frac{ \Delta_k F(x, x-2^{-n})}{n^k} = \left( - \frac{b_1^{\prime}(0)}{b_1} - \frac{c_1^{\prime}(0)}{c_1 + 1} \right)^{k}. 
\end{equation}  
\end{Prop}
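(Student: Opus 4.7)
The plan is to identify $\Delta_k F(x, x+2^{-n})$ with $\partial_t^k M_n(0, x)/M_n(0, x)$ and then extract its $n^k$-asymptotics. For $x \in D_m \setminus D_{m-1}$ and $n \ge m$, equation (\ref{Lem1-1}) gives $F(t, x + 2^{-n}) - F(t, x) = \mu_t([x, x+2^{-n})) = M_n(t, x)$, and differentiating $k$ times in $t$ at $t=0$ commutes with the finite difference in $x$, yielding
\[ \Delta_k F(x, x + 2^{-n}) = \frac{\partial_t^k M_n(0, x)}{M_n(0, x)}. \]
Taking logarithms, set $L_n(t) := \log M_n(t, x) = \sum_{i=0}^{n-1} \log H_i(t, x)$. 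The complete Bell polynomial identity applied to $M_n = e^{L_n}$ gives
\[ \frac{\partial_t^k M_n(0, x)}{M_n(0, x)} = B_k\bigl(\partial_t L_n(0), \partial_t^2 L_n(0), \dots, \partial_t^k L_n(0)\bigr), \]
in which $(\partial_t L_n)^k$ is the unique monomial with the maximal number $k$ of factors.

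The heart of the proof is to show that, for each $j \ge 1$, $\partial_t^j L_n(0) = \lambda_j n + O(1)$ as $n \to \infty$, with $\lambda_1 = b_1'(0)/b_1 + c_0'(0)/(c_0+1)$. Since $X_{i+1}(x) = 0$ for $i \ge m$, one has $H_i = P_i$ in that range, and Lemma~\ref{H-exp-fast}(ii) supplies the exponentially fast convergence $\partial_t \log H_i(0, x) \to \lambda_1$. For higher $j$ I would argue by induction using (\ref{eq-g}): differentiating $j$ times in $t$ yields a linear recursion for $\partial_t^j g_i(0, x)$ with contraction factor $\partial_y G_0(0, \cdot) = b_1(1+c_0) < 1$ (guaranteed by (\ref{Cond})), driven by a polynomial in the lower-order derivatives $\partial_t^{j'} g_i$ with $j' < j$, which converge exponentially by the induction hypothesis. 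Hence $\partial_t^j g_i(0, x)$, and then $\partial_t^j \log H_i(0, x)$, converge exponentially fast, and summing produces the claimed $\lambda_j n + O(1)$ asymptotic.

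Plugging back, $(\partial_t L_n(0))^k = \lambda_1^k n^k + O(n^{k-1})$, whereas every other monomial in $B_k$ involves at most $k-1$ factors each of size $O(n)$, hence contributes only $O(n^{k-1})$. Division by $n^k$ establishes (\ref{D1-1}). For the left-sided limit (\ref{D1-2}) I would invoke the duality (\ref{Dual}) in the form $F(t, x) = 1 - \widetilde F(t, 1-x)$, which turns the left increment at $x$ into the right increment of $\widetilde F$ at $1-x \in D$; applying (\ref{D1-1}) to the dual curve and unwinding (\ref{dual-Def}) then produces the analogous formula with $(\widetilde b_1, \widetilde c_0)$ in place of $(b_1, c_0)$, which after substitution expresses the limit in terms of $b_1, c_1, b_1'(0), c_1'(0)$. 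The main obstacle is the inductive exponential-decay estimate for $\partial_t^j g_i$ in the second paragraph; once that is in hand the Bell-polynomial leading-order extraction is essentially bookkeeping.
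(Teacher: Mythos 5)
Your reduction to $\Delta_k F(x, x+2^{-n}) = \partial_t^k M_n(0,x)/M_n(0,x)$, the identification of the limit constant via Lemma~\ref{H-exp-fast}(ii), and the treatment of (\ref{D1-2}) via the dual system all coincide with the paper, but your handling of the $n^k$-asymptotics in (\ref{D1-1}) is a genuinely different argument. The paper inducts on $k$ through the one-step recursion
\[
Z_{l,n+1}(x) = Z_{l,n}(x) + l\,\frac{\partial_t P_n(0,x)}{P_n(0,x)}\,Z_{l-1,n}(x) + \sum_{i=2}^{l}\binom{l}{i}\frac{\partial_t^{\,i} P_n(0,x)}{P_n(0,x)}Z_{l-i,n}(x),
\]
shows the trailing sum is $O(n^{l-2})$ from Lemma~\ref{Bdd}, and closes with a Ces\`aro argument on the first differences $Z_{l,n+1}-Z_{l,n}$ after substituting the induction hypothesis and (\ref{H-exp-fast-partial}). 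You instead pass to $L_n=\log M_n=\sum_{i<n}\log H_i$ and invoke the Fa\`a di Bruno / complete Bell polynomial expansion of $\partial_t^k e^{L_n}$, isolating $(\partial_t L_n)^k$ as the unique degree-$k$ monomial. Both approaches rest on the same two lemmas, but your route handles all $k$ at once and is actually lighter than you make it sound: to run the Bell-polynomial degree count you only need $\partial_t L_n(0) = q_1 n + o(n)$ (a Ces\`aro consequence of Lemma~\ref{H-exp-fast}(ii), no exponential rate needed) and $\partial_t^j L_n(0) = O(n)$ for $j\ge 2$ (immediate, since $\partial_t^j\log H_i$ is a polynomial in the uniformly bounded ratios $\partial_t^{j'}H_i/H_i$ by Lemma~\ref{Bdd} and (\ref{Basic2})), so the inductive exponential-decay estimate for $\partial_t^j g_i$ that you flag as the main obstacle is not actually required. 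One item worth pinning down: when you unwind the dual substitution (\ref{dual-Def}) for (\ref{D1-2}), the first term of the dual constant comes out as $\widetilde b_1'(0)/\widetilde b_1 = -b_1'(0)/(1-b_1)$; check this carefully against the $-b_1'(0)/b_1$ appearing in the stated (\ref{D1-2}), since the two agree only when $b_1=1/2$ or $b_1'(0)=0$.
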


\begin{proof}
Let $x \in D$. 
Then $x = x_{n}$ and $P_{n}(t, x) = H_n(t, x)$ 
hold for any $t$ and sufficiently large $n$.  
We now show  
\begin{equation}\label{induction}
 \lim_{n \to \infty} \frac{Z_{k, n}(x)}{n^{k}} = q_{1}^{k} \ \  \text{ where } q_1 := \frac{b_1^{\prime}(0)}{b_1} + \frac{c_0^{\prime}(0)}{1+c_0}   
\end{equation} 
by induction on $k$. 
The case $k=0$ follows immediately. 
Assume  (\ref{induction})
holds for any $k = 0,1, \dots, l-1$.  
Differentiating 
\[ F(t, x_{n+1}+2^{-n-1}) - F(t, x_{n+1}) = \left(F(t, x_{n}+2^{-n}) - F(t, x_{n})\right) H_n(t, x)  \]
$l$ times with respect to $t$ at $t = 0$,  
\begin{equation}\label{Z-vs-Z} 
Z_{l, n+1}(x) = Z_{l, n}(x) + l \frac{\partial_{t}P_{n}(0, x)}{P_{n}(0, x)} Z_{l-1, n}(x) + \sum_{i=2}^{l} \binom{l}{i} \frac{\partial_{t}^{ \, i}P_{n}(0, x)}{P_{n}(0, x)} Z_{l-i, n}(x). 
\end{equation}

By (\ref{M1-C}) and (\ref{Z-Def}), $Z_{k, n}(x) = O(n^k)$.  
By (\ref{Basic2}) and Lemma \ref{Bdd}, 
\[ \sum_{i=2}^{l} \binom{l}{i} \frac{\partial_{t}^{ \, i} P_{n}(0, x)}{P_{n}(0, x)} Z_{l-i, n}(x) = O\left(n^{l-2}\right). \] 
Using this, (\ref{H-exp-fast-partial}) and  the hypothesis of induction,  
\[ \lim_{n \to \infty} \frac{Z_{l, n+1}(x) - Z_{l, n}(x)}{n^{l-1}} = l \lim_{n \to \infty} \frac{\partial_{t}P_{n}(0, x)}{P_{n}(0, x)} \frac{Z_{l-1, n}(x)}{n^{l-1}} = l q_{1}^{l}. \]
Hence  
(\ref{induction})
holds for $k = l$. 
Thus we have (\ref{D1-1}).

In the same manner as above 
\[ \lim_{n \to \infty}\frac{ \Delta_k F(x-2^{-n}, x)}{n^k} = \lim_{n \to \infty} \frac{\widetilde Z_{k,n}(1-x)}{n^k} =  (- q_2)^k  \ \ 
\text{ where } q_2 := \frac{b_1^{\prime}(0)}{b_1} + \frac{c_1^{\prime}(0)}{c_1 + 1}. \]
Thus we have (\ref{D1-2}). 
\end{proof}

We will show  Theorem \ref{D1-new} using Proposition \ref{D1} crucially.   
Roughly, what we need to show is substituting $h$ for $2^{-n}$ in Proposition \ref{D1}.   
Recall (\ref{Def-m}) for the definition of $\{m_n(z)\}_n$.

\begin{proof}[Proof of Theorem \ref{D1-new}]
Let $x \in D$ and $n_0 := \min\{n : x \in D_n\}$.  
If $m_1(h) > n_0$, 
\[ \Delta_{k} F(x, (x+h)_{m_{1}(h)}) = Z_{k, m_{1}(h)}(x) \] 
and hence 
\[ \Delta_{k} F(x, x+h)  =  Z_{k, m_{1}(h)}(x) \]
\[ + \sum_{i=2}^{\infty} \frac{F((x+h)_{m_{i}(h)}) - F((x+h)_{m_{i-1}(h)})}{F(x+h) - F(x)} \left(Z_{k, m_{i}(h)}((x+h)_{m_{i-1}(h)}) -  Z_{k, m_{1}(h)}(x)\right). \]
By (\ref{Basic2}) and $((x+h)_{m_{i-1}(h)})_{m_1(h) - 1} = x$, 
\[ \frac{F((x+h)_{m_{i}(h)}) - F((x+h)_{m_{i-1}(h)})}{F(x+h) - F(x)} \le \frac{M_{m_{i}(h)}(0, (x+h)_{m_{i-1}(h)})}{M_{m_{1}(h)-1}(0, x)} \le p_{\text{max}}(0)^{m_i(h) - m_1(h)}. \]

Using (\ref{Z-vs-Z}), Lemma \ref{Bdd} and  (\ref{M1-1}), 
there is  a constant $C_{k}^{\prime \prime} < +\infty$ such that 
\[ \left|Z_{k, m_{i}(h)}((x+h)_{m_{i-1}(h)}) - Z_{k, m_{1}(h)-1}(x)\right| \le C_{k}^{\prime \prime} m_1(h)^{k-1} \left(m_i(h) - m_1(h)\right)^{k}.  \]

Therefore 
\[ \frac{\left|\Delta_{k} F(x, x+h)  - Z_{k, m_{1}(h)}(x)\right|}{m_1(h)^{k}} 
\le C_{k}^{\prime \prime} \frac{1}{m_1(h)} \sum_{n \ge 1} n^k p_{\text{max}}(0)^n. \]

The right hand side goes to $0$ as $h \to 0, h > 0$.   
By this and (\ref{D1-1})     
we have (\ref{D1-new-1}).
We can show (\ref{D1-new-2}) in the same manner by using (\ref{D1-2}).    
\end{proof}

The asymptotic of $f_k(x)$ around $x \in D$ are quite different depending on $(b_{1}, c_{0}, c_{1})$.   

\begin{Thm}\label{D2}
Let $x \in D$. Then\\
\textup{(i) } If $c_{0} < (1-2b_{1})/2b_{1}$ and $c_{1} > 1-2b_{1}$, 
there is  $c > 1$ such that  
\begin{equation}\label{D2-1}
\lim_{h \to 0} \frac{|f_k(x+h) - f_k(x)|}{|h|^c}  = 0.  
\end{equation}   
\textup{(ii) } Assume  (ND) holds. 
If $c_{0} \ge (1-2b_{1})/2b_{1}$ or $c_{1} \le 1-2b_{1}$, 
there is  $c \le 1$ such that  
\begin{equation}\label{D2-2}
\limsup_{h \to 0} \frac{|f_k(x+h) - f_k(x)|}{|h|^c (\log_2(1/|h|))^k} = +\infty.  
\end{equation} 
If $\mu_0$ is singular, $c < 1$.  
If $\mu_0$ is absolutely continuous, $c=1$. 
\end{Thm}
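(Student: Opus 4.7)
The plan is to reduce both parts to the exponential decay of the one-sided mass $M_n(0,x) = \mu_0([x,x+2^{-n}))$ (and its dual) at a dyadic $x$. Set $\rho_R := b_1(1+c_0)$ and $\rho_L := (1-b_1)/(1+c_1)$. By Lemma \ref{H-exp-fast}(i), for $x \in D$ the factors $H_i(0,x)$ converge to $\rho_R$ exponentially fast, so summing $\log(H_i(0,x)/\rho_R)$ for $i = 0,\dots,n-1$ yields $M_n(0,x) = c_R(x)\rho_R^{n}(1+O(\theta^n))$ for some $\theta \in (0,1)$ and $c_R(x) > 0$; the dual construction gives the analogous statement from the left, with rate $\rho_L$ and constant $c_L(x)$. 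The hypothesis in (i) is exactly $\rho_R,\rho_L < 1/2$, i.e.\ $\min(-\log_2\rho_R,-\log_2\rho_L) > 1$; the hypothesis in (ii) is $\max(\rho_R,\rho_L)\geq 1/2$.

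For (i), pick any $c$ with $1 < c < \min(-\log_2\rho_R,-\log_2\rho_L)$. Given $h > 0$ small, let $n$ be the integer with $2^{-n-1} < h \leq 2^{-n}$; monotonicity of $F$ gives $|F(x+h)-F(x)| \leq M_n(0,x) \leq C\rho_R^n$, and therefore $|F(x+h)-F(x)|/|h|^c \leq C' \cdot 2^{n(c+\log_2\rho_R)} \to 0$. Inserting this into (\ref{M1-C}) and absorbing the $(\log_2(1/|h|))^k$ factor into the exponentially small ratio, we obtain $|f_k(x+h)-f_k(x)|/|h|^c \to 0$ as $h \to 0^+$; the left-hand estimate is identical after passing to the dual $\widetilde F$ via (\ref{Dual}). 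This proves (\ref{D2-1}).

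For (ii), first combine Lemma \ref{ND-Lemma} with (\ref{H-exp-fast-partial}) to see that the constant $q_1 := b_1'(0)/b_1 + c_0'(0)/(c_0+1)$ from Theorem \ref{D1-new} is strictly positive: Lemma \ref{ND-Lemma} gives $\partial_t P_n(0,x) \geq \delta > 0$ uniformly, and since $P_n(0,x) \leq 1$ we have $\partial_t P_n/P_n \geq \delta$, so (\ref{H-exp-fast-partial}) forces $q_1 \geq \delta$. Without loss of generality assume $\rho_R \geq 1/2$ (otherwise work with $\rho_L$ via the dual). Testing along $h_n := 2^{-n}$, Theorem \ref{D1-new} together with the exponential asymptotic for $M_n$ yields
\[
f_k(x+h_n) - f_k(x) \;\sim\; c_R(x) q_1^k (n\log 2)^k \rho_R^n,
\]
whence
\[
\frac{|f_k(x+h_n)-f_k(x)|}{|h_n|^c(\log_2(1/|h_n|))^k} \;\sim\; c_R(x) q_1^k \cdot 2^{n(c+\log_2\rho_R)}.
\]
In the singular case $\rho_R > 1/2$ we have $-\log_2\rho_R < 1$, so any $c \in (-\log_2\rho_R,1)$ forces the right-hand side to diverge along $h_n$; in the absolutely continuous case $\rho_R = \rho_L = 1/2$, we take $c = 1$, for which the expression has the positive limit $c_R(x)q_1^k$.

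The main technical obstacle is extracting the clean exponential asymptotic $M_n(0,x) \sim c_R(x)\rho_R^n$ at dyadic $x$ with a genuine positive constant $c_R(x)$: this requires summability of the exponentially-decaying corrections $\log(H_i(0,x)/\rho_R)$ furnished by Lemma \ref{H-exp-fast}(i), so that $\log M_n - n\log\rho_R$ actually converges rather than merely stays bounded. Once this is in hand, both parts follow from Theorem \ref{D1-new} and the basic inequality (\ref{M1-C}) by a single test-sequence argument.
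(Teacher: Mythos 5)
Your proposal follows essentially the same route as the paper's proof. For part (i), you use the exponential decay of $M_n(0,x)$ at dyadic $x$ coming from Lemma \ref{H-exp-fast}(i), the monotonicity bound $|F(x+h)-F(x)| \le M_n(0,x)$, insert into (\ref{M1-C}), and handle $h<0$ by passing to the dual via (\ref{Dual}); the paper does exactly this (writing the decay as $\lim_m 2^{cm}m^k M_m(0,x)=0$ for some $c>1$ rather than as $M_n \sim c_R(x)\rho_R^n$, but with the same content and the same ingredients). For part (ii), combining Theorem \ref{D1-new}, Lemma \ref{ND-Lemma}, and the two-sided exponential estimate for $F(x+2^{-n})-F(x)$ is again exactly the paper's argument. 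Your derivation that $q_1>0$ from Lemma \ref{ND-Lemma} is correct and matches the paper's intent. (Small slip: $\log_2(1/h_n)=n$, so the asymptotic should read $\sim c_R(x)\,q_1^k\, n^k\, \rho_R^n$, not $(n\log 2)^k$; this is immaterial.)

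There is, however, a gap in your part (ii) that you yourself flag but do not resolve. Your case split treats only $\rho_R>1/2$ (singular, genuine divergence with some $c<1$) and $\rho_R=\rho_L=1/2$ (absolutely continuous). In the latter case you observe that with $c=1$ the test-sequence ratio tends to the \emph{finite} positive constant $c_R(x)q_1^k$ — and indeed, using Theorem \ref{D1-new} together with $F(x+h)-F(x)\le M_n(0,x)$ for $2^{-n-1}<h\le 2^{-n}$, the full limsup as $h\to 0$ is bounded, not $+\infty$. Thus (\ref{D2-2}) is not established with $c=1$ in the absolutely continuous case. The singular boundary sub-case $\max(\rho_R,\rho_L)=1/2$ with $\min(\rho_R,\rho_L)<1/2$ is likewise not addressed by your ``WLOG $\rho_R\ge 1/2$'' and ``singular case $\rho_R>1/2$'' split, and there again no $c\le 1$ produces divergence. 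I note for fairness that the paper's own proof of (ii) has precisely this same issue: it ends with $\limsup_{h\to 0}|F(x+h)-F(x)|/|h|^c>0$ (positivity, not divergence), which together with the finite limit from Theorem \ref{D1-new} only yields a positive limsup for (\ref{D2-2}). So your proof is faithful to the paper's argument; the shortfall is between that argument and the $+\infty$ asserted in the theorem statement.
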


(i) is similar to Theorem \ref{SinguDiff} 
and consistent with (\ref{Cond}) and (\ref{Ass}).   
An example of a graph of $f_1$ satisfying $c_{0} < (1-2b_{1})/2b_{1}$ and $c_{1} > 1-2b_{1}$ is given in Figure \ref{fig1} below. 
We will show (i) in a manner similar to the proof of Theorem \ref{SinguDiff}.  
The key point is showing $M_{m}(0, x)$ decays rapidly.  
We will show it by Lemma \ref{H-exp-fast} (i), 
which plays the same role as (\ref{Lem1-Leb}) in the proof of Theorem \ref{SinguDiff}.   
If $\mu_{0}$ is absolutely continuous or $c_0 = c_1 = 0$,  $c_{0} \ge (1-2b_{1})/2b_{1}$ or $c_{1} \le 1-2b_{1}$. 
For the proof of (ii), by Theorem \ref{D1-new}, it suffices to show $|F(x+h) - F(x)| \ge c |h|^c$. 
We will show it by Lemma \ref{H-exp-fast} (i).

\begin{proof} 
Let $x \in D$.  
By Lemma \ref{H-exp-fast} (i) and $b_{1}(c_{0}+1) < 1/2$, 
\[ \lim_{m \to \infty} 2^{cm} m^{k} M_{m}(0, x) = 0 \text{ for some $c > 1$}.  \]
Using this, (\ref{M1-1}) and (\ref{SD2}), 
\begin{equation}\label{D2-right} 
\lim_{h \to 0, h > 0} \frac{|f_k(x+h) - f_k(x)|}{|h|^c} = 0.  
\end{equation} 

Since $c_1 > 1 - 2b_1$, 
$\widetilde b_{1}(\widetilde c_{0}+1) < 1/2$ and 
(\ref{D2-right}) holds also for $\partial_{t}^{k} \widetilde F(0,x)$.
By (\ref{Dual}) 
\begin{equation*} 
\partial_t^k \widetilde F(t, x) = - \partial_t^k F(t, 1-x), \ \ x \in (0,1),  \, k \ge 1.  
\end{equation*}  
Therefore 
\begin{equation}\label{D2-left} 
\lim_{h \to 0, h > 0} \frac{|f_k(x) - f_k(x-h)|}{|h|^c} = 0.   
\end{equation}  
(\ref{D2-right}) and (\ref{D2-left}) imply (\ref{D2-1}).

We now show (ii). 
Assume $c_{0} \ge (1-2b_{1})/2b_{1}$. 
It is equivalent to $b_{1}(c_{0}+1) \ge 1/2$. 
By  Lemma \ref{H-exp-fast} (i), 
for some $c \le 1$ which does not depend on $x$,  
\begin{equation}\label{D2-2-right}
\liminf_{n \to \infty} 2^{c \cdot n} (F(x+2^{-n}) - F(x)) > 0.  
\end{equation}

Assume $c_{1} \le 1-2b_{1}$. 
Then $\widetilde c_{0} \ge (1-2\widetilde b_{1})/2\widetilde b_{1}$. 
Therefore (\ref{D2-2-right}) holds for $\widetilde F$. 
Hence for some $c \le 1$ which does not depend on $x$,  
\begin{equation}\label{D2-2-left}  
\liminf_{n \to \infty} 2^{c \cdot n} (F(x) - F(x-2^{-n})) 
= \liminf_{n \to \infty} 2^{c \cdot n} (\widetilde F(1-x + 2^{-n}) - \widetilde F(1-x))  > 0. 
\end{equation}  

Since either (\ref{D2-2-right}) or (\ref{D2-2-left}) holds, 
\[ \limsup_{h \to 0} \frac{|F(x+h) - F(x)|}{|h|^c} > 0. \] 
If $\mu_0$ is singular, $c < 1$. If it is absolutely continuous, $c = 1$. 
Using this, Lemma \ref{ND-Lemma} and  Theorem \ref{D1-new}, we have (\ref{D2-2}).  
\end{proof}

\begin{figure}[h]
\centering
\includegraphics[width = 60mm, height = 45mm, bb = 0 0 846 594]{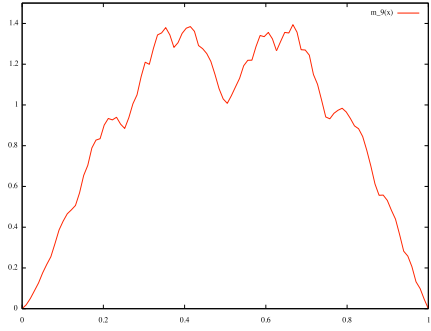}
\caption{\small Graph of $f_1$ for $\displaystyle (b_{1}(t), c_0(t), c_1(t)) = \left(t+\frac{1}{2},  \ -\frac{1}{3}, \ \frac{1}{3}\right)$.}
\label{fig1}
\end{figure}


\section{Results for two special cases}

We say {\it (L) holds} if $(b_1(t), c_0(t), c_1(t)) = (t+a, 0,0)$ for some $a \in (0,1)$. 
{\it In this section we always assume (ND) holds and either $k=1$ or (L) holds.}  
Recall Definition \ref{ND} and Lemma \ref{ND-Lemma} for (ND).     

Let \[ Y_i(x) := \frac{\partial_{t} H_{i-1}(0, x)}{H_{i-1}(0, x)}, \, i \ge 1, \text{ and } Y_0(x) := 0.\]  
$Y_i(x) > 0$ if and only if $X_{i}(x) = 0$. 
If (L) holds, 
\[  Y_i(x) = \frac{1}{a}1_{\{X_i(x) = 0\}}(x) + \frac{1}{1-a}1_{\{X_1(x) = 1\}}(x). \] 

Lemmas \ref{Bdd} and \ref{ND-Lemma} imply
\begin{equation}\label{Fund-Y}
0 < \inf_{i \ge 1, x \in (0,1)} |Y_i(x)| \le \sup_{i \ge 1, x \in (0,1)} |Y_i(x)| < +\infty.    
\end{equation}

Recall the definition of $Z_{k,n}$ in (\ref{Z-Def}). Then  
\begin{equation}\label{ZY}
Z_{1,n}(x) = \sum_{i=1}^{n} Y_i(x). 
\end{equation}  

If (L) holds, using (\ref{Z-vs-Z}), 
\begin{equation}\label{ZZ} 
Z_{k, n+1}(x) -  Z_{k, n}(x) = k Y_{n+1}(x) Z_{k-1, n}(x), \ \  x \in [0,1), \ \ k \ge 2.   
\end{equation} 

Let $\mu_0( \cdot | A)$ be the conditional probability of $\mu_0$ given a Borel measurable set $A$.   
Denote the expectation with respect to $\mu_0( \cdot | A)$ by $E^{\mu_0}_{A}$. 
Let \[ \mathcal{F}_{n} := \sigma\left(\left\{\left[\frac{k}{2^{n}}, \frac{k+1}{2^{n}}\right) \  \bigg| \ 0 \le k \le 2^{n} -1 \right\}\right), \ \ n \ge 0.\]  
Then 
$\{Z_{k,i} \}_{i \ge n}$ is a $\{\mathcal{F}_{i}\}_i$-martingale\footnote{See Williams' book \cite{W} for definition.} with respect to $\mu_0( \cdot | A)$ for $A \in \mathcal{F}_n$. 
By  induction on $k$ 
\begin{equation*}
Z_{k, n} = k! \sum_{1 \le i_1 < \cdots < i_k \le n} \left(\prod_{j=1}^{k} Y_{i_{j}}\right) = O(n^{k}).  
\end{equation*}

\begin{Lem}[\text{Fluctuation of $\{Z_{k, n}\}_n$}]\label{Lem-Z} 
For each $k \ge 1$ 
\begin{equation}\label{strict} 
\limsup_{m \to +\infty} Z_{k,m}(x) > \liminf_{m \to +\infty} Z_{k,m}(x), \ \ x \in (0,1).  
\end{equation} 
\begin{equation}\label{Z-infty}
\limsup_{m \to \infty} E^{\mu_0}_{A}\left[ \left|Z_{k, m}\right| \right] = +\infty, \ \ A \in \mathcal{F}_n,  n \ge 1.   
\end{equation}  
\end{Lem}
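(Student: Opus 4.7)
My plan is to exploit the martingale structure of $\{Z_{k,m}\}_{m\ge n}$ under $\mu_0(\cdot\mid A)$ (as noted just above the lemma), together with the uniform lower bound $c_0 := \inf_{i\ge 1,\,x\in(0,1)}|Y_i(x)|>0$ from (\ref{Fund-Y}), to handle both parts through a quadratic-variation estimate fed into Davis's inequality.

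For (\ref{Z-infty}) I proceed by induction on $k$. Fix $A\in\mathcal{F}_n$ and consider the centered martingale $\widetilde Z_m := Z_{k,m}-Z_{k,n}$, $m\ge n$, which starts at zero under $\mu_0(\cdot\mid A)$. Davis's inequality (the $L^1$-form of the Burkholder--Davis--Gundy inequality) supplies an absolute constant $c_*>0$ with
\[ E^{\mu_0}_{A}\bigl[\,|\widetilde Z_m|\,\bigr]\;\ge\;c_*\,E^{\mu_0}_{A}\!\left[\Big(\textstyle\sum_{i=n}^{m-1}(Z_{k,i+1}-Z_{k,i})^2\Big)^{1/2}\right]. \]
For $k=1$, (\ref{ZY}) reduces the inner sum to $\sum_{i=n+1}^{m}Y_i^2 \ge c_0^{\,2}(m-n)$, so the right-hand side grows like $c_*c_0\sqrt{m-n}\to\infty$. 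For $k\ge 2$ under (L), (\ref{ZZ}) gives $Z_{k,i+1}-Z_{k,i}=kY_{i+1}Z_{k-1,i}$; retaining only the term $i=m-1$ shows the quadratic variation dominates $k^2c_0^{\,2}Z_{k-1,m-1}^{\,2}$, whence $E^{\mu_0}_{A}[|\widetilde Z_m|]\ge c_*kc_0\,E^{\mu_0}_{A}[|Z_{k-1,m-1}|]$, which diverges by the inductive hypothesis. Because $|Z_{k,n}|\le C_k n^k$ on $A$ by (\ref{M1-1}) and Lemma \ref{Bdd}, the triangle inequality then delivers (\ref{Z-infty}).

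For (\ref{strict}) I read the pointwise inequality as holding $\mu_0$-almost everywhere, which is consistent with the probabilistic framework of this section; a completely literal reading on $(0,1)$ cannot be intended because for $x\in D$ Theorem \ref{D1-new} together with (ND) (which, via Lemma \ref{ND-Lemma} combined with Lemma \ref{H-exp-fast}(ii), forces $b_1^{\prime}(0)/b_1+c_0^{\prime}(0)/(1+c_0)>0$) gives $\lim_m Z_{k,m}(x)=+\infty$. Under $\mu_0$, $\{Z_{k,m}\}_m$ is a martingale with uniformly bounded increments (Lemma \ref{Bdd}) whose quadratic variation diverges $\mu_0$-almost surely by the argument of the previous paragraph. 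Stout's martingale law of the iterated logarithm then produces $\limsup_m Z_{k,m}=+\infty$ and $\liminf_m Z_{k,m}=-\infty$ $\mu_0$-a.s., which implies (\ref{strict}).

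The main obstacle is the inductive step in (\ref{Z-infty}): propagating $L^1$-divergence from level $k-1$ to level $k$ by keeping only the single largest term of the quadratic variation is crude but enough to drive the expectations to infinity. Sharper quantitative rates via the Cauchy--Schwarz inequality $\big(\textstyle\sum a_i^2\big)^{1/2}\ge m^{-1/2}\sum|a_i|$ are available, but neither the qualitative divergence in (\ref{Z-infty}) nor the LIL-based argument for (\ref{strict}) requires them.
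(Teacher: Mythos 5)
Your proposal reverses the paper's logical order and, in doing so, runs into two genuine gaps.

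The paper first proves (\ref{strict}) by a short pointwise argument (for $k=1$, $|Y_i|\ge \inf|Y_i|>0$ by (\ref{Fund-Y}) forces the increments not to vanish; for $k\ge 2$ under (L), an induction via (\ref{ZZ}): if $Z_{k,m}(x)$ converged to a finite limit its increments $kY_{m}Z_{k-1,m-1}$ would tend to $0$, so $Z_{k-1,m}\to 0$, contradicting the inductive hypothesis). It then deduces (\ref{Z-infty}) by contraposition and the martingale convergence theorem: if $\sup_m E^{\mu_0}_A[|Z_{k,m}|]<\infty$ the martingale would converge $\mu_0(\cdot|A)$-a.s.\ to a finite limit, contradicting (\ref{strict}). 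You try to prove (\ref{Z-infty}) directly and then (\ref{strict}) separately, and both halves have problems.

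\textbf{The Davis step.} The $L^1$-BDG/Davis inequality relates $E[M_n^*]$ (the running maximum) to $E[[M]_n^{1/2}]$; it does not give $E[|M_n|]\ge c_*E[[M]_n^{1/2}]$, because Doob's maximal inequality has no $p=1$ version. Your quadratic-variation lower bounds are fine, but the displayed inequality you call ``Davis'' does not hold as stated. (The right tool for the direction you want is Austin's theorem: $L^1$-boundedness of a martingale forces $[M]_\infty<\infty$ a.s.; combined with your divergence of $[Z_k]_m$ this would recover (\ref{Z-infty}). But that is a different theorem, and you would still need a pointwise (\ref{strict})-type fact to conclude $[Z_k]_\infty=\infty$ a.s.\ for $k\ge 2$.)

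\textbf{The Stout step.} Stout's LIL (Lemma \ref{LIL}) requires $c\le|S_i-S_{i-1}|\le C$ uniformly. For $k\ge 2$ the martingale increments are $Z_{k,m}-Z_{k,m-1}=kY_mZ_{k-1,m-1}$, which grow like $m^{k-1}$; Lemma \ref{Bdd} bounds $\partial_t^k H_n$, not the martingale increments. The paper's footnote following Theorem \ref{LMC} says explicitly that applying Stout's LIL to $\{Z_{k,n}\}_n$ for $k\ge 2$ is exactly what is \emph{not} available. Moreover, reading (\ref{strict}) as only a $\mu_0$-a.e.\ statement is not adequate: the proof of Theorem \ref{N-Conv-V}(i) applies it to an arbitrary fixed $x\notin D$, so the pointwise assertion is what is actually used. (Your remark that for dyadic $x$ the sequence diverges to $+\infty$ is correct; the paper's intended reading of (\ref{strict}) is ``$Z_{k,m}(x)$ does not converge to a finite real number,'' which is what its argument establishes and what Theorem \ref{N-Conv-V} consumes by splitting into the divergent and oscillatory cases.)

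In short: the paper's argument is elementary (increments bounded away from zero, plus martingale convergence), whereas your route appeals to BDG and Stout's LIL and, as written, both appeals are misapplied.
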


\begin{proof}
The case $k=1$ of (\ref{strict}) follows from (\ref{Fund-Y}).  
Assume (L) holds. 
We now show this by induction on $k$.  
Assume that this assertion holds for $k = 1, \dots, l$ and 
\[ \limsup_{n \to +\infty} Z_{l+1, n}(x) = \liminf_{n \to +\infty} Z_{l+1, n}(x) \ \text{ for some $x$.} \] 
Then (\ref{ZZ}) and (\ref{Fund-Y}) imply
$\lim_{n \to \infty} Z_{l, n}(x) = 0$.  
This contradicts the assumption of induction. 
Hence 
\[ \limsup_{n \to +\infty} Z_{l+1, n}(x) > \liminf_{n \to +\infty} Z_{l+1, n}(x)\] for {\it any} $x$.  
Thus we have (\ref{strict}).  
Using this, (\ref{strict}) and the martingale convergence theorem (\cite[Chapter 11]{W}),  we have (\ref{Z-infty}).  
\end{proof}

\subsection{Differentiablity and variation} 

For $g : [0,1] \to \R$ and $0 \le a \le b \le 1$, let    
\[ V(g; [a,b]) := \sup\left\{ \sum_{i=1}^{n} |g(t_{i}) - g(t_{i-1})| \ \ \bigg| \ \ a = t_0 < t_1 < \cdots < t_n = b \right\}. \]  

\begin{Thm}\label{N-Conv-V}
\textup{(i) } $($Non-differentiablity for the absolutely continuous case$)$  
For any $x \in (0,1)$,   
$\Delta_k F(x, x+h)$ does not converge to any real number as $h \to 0$.
In particular, if $\mu_0$ is absolutely continuous, $f_k$ is not differentiable at any point in $(0,1)$.\\
\textup{(ii) } $($Variation$)$ 
$V(f_k; [a, b]) = +\infty$ holds for any $[a,b] \subset [0,1]$, $a < b$.     
\end{Thm}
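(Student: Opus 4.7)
The plan is to deduce both parts from the fluctuation properties of $Z_{k,m}$ recorded in Lemma~\ref{Lem-Z}.

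For part~(i), I would split on whether $x$ is dyadic. If $x \in D$, Theorem~\ref{D1-new} gives that the one-sided normalized limits of $\Delta_k F(x,x+h)/(\log_2(1/|h|))^k$ equal $q_1^k$ and $(-q_2)^k$ with $q_1 = b_1'(0)/b_1 + c_0'(0)/(c_0+1)$ and $q_2 = b_1'(0)/b_1 + c_1'(0)/(c_1+1)$. A short verification using (\ref{H-exp-fast-partial}) and Lemma~\ref{ND-Lemma}---for $n$ large and $x \in D$ one has $\partial_t P_n(0,x) = \partial_t H_n(0,x)$ which is bounded below by a positive constant under (ND) while $H_n(0,x)$ is bounded above, so the ratio limit $q_1$ is strictly positive---shows both $q_i \neq 0$. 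Hence $|\Delta_k F(x,x+h)| \to +\infty$ and no real limit exists.

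For $x \in (0,1)\setminus D$ I would argue by contradiction: suppose $\Delta_k F(x,x+h) \to L \in \R$. The crucial identity
\[ Z_{k,m}(x) \;=\; \frac{A_m\,\Delta_k F(x, x_m+2^{-m}) + B_m\,\Delta_k F(x, x_m)}{A_m + B_m}, \qquad A_m := F(x_m+2^{-m})-F(x),\ B_m := F(x)-F(x_m), \]
has both weights strictly positive (since $x \notin D$ and $F$ is strictly increasing) and presents $Z_{k,m}(x)$ as a convex combination of two quantities both tending to $L$ (as $x_m+2^{-m}\to x$ from the right and $x_m \to x$ from the left). So $Z_{k,m}(x) \to L$, contradicting (\ref{strict}). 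For the ``in particular'' clause: if $\mu_0$ is absolutely continuous and $f_k'(x)$ exists, then using the local $O(h)$ bound on $F(x+h)-F(x)$ supplied by absolute continuity together with differentiability of $f_k$ at $x$ yields convergence of $\Delta_k F(x,x+h)$ to $f_k'(x)/F'(x)$ at every $x$ where $F'(x)>0$, contradicting the non-convergence just proved.

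For part~(ii), I would fix a dyadic subinterval $I = [j/2^{m_0}, (j+1)/2^{m_0}] \subset [a,b]$, so $\mu_0(I) > 0$. For each $n \geq m_0$, partitioning $I$ into dyadic intervals of length $2^{-n}$ and using that $Z_{k,n}$ is constant on each cell with $\mu_0$-mass $M_n(0,\cdot)$ gives
\[ V(f_k;[a,b]) \;\geq\; V(f_k;I) \;\geq\; \sum_{[l/2^n,(l+1)/2^n]\subset I} |Z_{k,n}(l/2^n)|\,M_n(0,l/2^n) \;=\; \mu_0(I)\,E^{\mu_0}_I\bigl[|Z_{k,n}|\bigr], \]
which is unbounded in $n$ along a subsequence by (\ref{Z-infty}). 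The principal obstacle is setting up the convex-combination identity in~(i) for non-dyadic $x$, particularly the strict positivity of $A_m$ and $B_m$; once that is in place, everything else reduces to Lemma~\ref{Lem-Z} and Theorem~\ref{D1-new}.
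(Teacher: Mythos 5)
Your proposal is correct and follows essentially the same route as the paper: for $x \notin D$ the paper records exactly the inequality $\min\{\Delta_k F(x,x_n),\Delta_k F(x,x_n+2^{-n})\} \le Z_{k,n}(x) \le \max\{\cdots\}$ (its display (\ref{minmax})) and combines it with (\ref{strict}), which is precisely what your convex-combination identity $Z_{k,m}(x) = \frac{A_m\Delta_k F(x,x_m+2^{-m}) + B_m\Delta_k F(x,x_m)}{A_m+B_m}$ encodes in a slightly more explicit form; part (ii) is the same computation equating the dyadic variation sum with $\mu_0(I)\,E^{\mu_0}_I[|Z_{k,m}|]$ and invoking (\ref{Z-infty}). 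The only place you are a bit looser than you'd want is the ``in particular'' clause, where your argument establishes non-differentiability only at points with $F'(x)>0$, but the paper is equally terse on that step, so this is a shared informality rather than a departure in method.
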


(i) is an extension of \cite[Theorem 5.1]{AK1}. 
A problem of this kind was also considered by \cite{T}. 
Our proof of (i) is somewhat similar to Billingsley \cite{B} and \cite[Theorem 5.1]{AK1}.  
The key is a fluctuation of $\{Z_{k,n}\}_n$ in (\ref{strict}).  
It seems natural to consider whether $f_k$ is of bounded variation.   
To our knowledge variations of $f_k$ have not been considered. 
The key of proof of (ii) is showing, by using (\ref{Z-infty}), 
the expectation of $\left|Z_{k,m}\right|$ under $\mu_0$ on an interval diverges to infinity.   

\begin{proof}
If $x \in D$, 
the assertion follows from Theorem \ref{D1-new} and the condition (ND).       
Assume  $x \notin D$.  
It is easy to see that for any $k, n \ge 1$ 
\begin{align}\label{minmax}
\min\left\{ \Delta_k F(x, x_n),  \Delta_k F(x, x_n + 2^{-n}) \right\}  
&\le Z_{k,n}(x) \notag \\
&\le \max  \left\{ \Delta_k F(x, x_n),  \Delta_k F(x, x_n + 2^{-n}) \right\}   
\end{align} 
By (\ref{strict}) $Z_{k,n}(x)$ does not converge to any real number. 
Therefore 
if $(Z_{k,n}(x))_n$ diverges as $n \to +\infty$, \[ \limsup_{h \to 0} \left|\Delta_k F(x, x+h)\right| = +\infty. \] 
If $(Z_{k,n}(x))_n$ fluctuates as $n \to +\infty$, 
\begin{align*} 
\limsup_{h \to 0} \Delta_k F(x, x+h) - \liminf_{h \to 0} \Delta_k F(x, x+h) &\ge \limsup_{n \to +\infty} Z_{k,n}(x) - \liminf_{n \to +\infty} Z_{k,n}(x)\\  
&\ge c
\end{align*}  
for some $c = c(x) > 0$.       
These imply (i).  
By (\ref{Z-Def}), 
\[ \sum_{l=2^{m-n}(j-1) + 1}^{2^{m-n}j} \left| f_k\left(\frac{l}{2^m}\right) - f_k\left(\frac{l-1}{2^m}\right) \right| 
= E^{\mu_{0}}_{[(j-1)/2^n, j/2^n)} \left[  \left|Z_{k,m}\right|  \right], \, m > n. \]
(ii)  follows from this and (\ref{Z-infty}).   
\end{proof}

\subsection{MTNI} 

\begin{Thm}[MTNI]\label{MTNI}
For some $c \in [0,1]$ the following hold :\\
\textup{(i) }
\begin{equation}\label{MTNI-k} 
 \ \ \ \ \ \ \ \ \ \ \ \ \ \ \ \  \limsup_{h \to 0} \frac{|f_k(x+h) - f_k(x)|}{|h|^c} = +\infty   \ \ \text{  $\mu_0$-a.s.$x$.}  \ \ \ \ \ \ \ \ \ \ \ \ \ \ \
\end{equation}
\textup{(ii) } For any open interval $J$ 
\[   \ \  \sup_{x, y \in J, x > y}\frac{f_k(x) - f_k(y)}{(x-y)^{c}} = +\infty \ \text{ and }  
\inf_{x, y \in J, x > y}\frac{f_k(x) - f_k(y)}{(x-y)^{c}} = -\infty \ \ \]
If $\mu_0$ is singular, $c < 1$. If $\mu_0$ is absolutely continuous, $c = 1$. 
If $(L)$ holds, $c$ does not depend on $k$.  
\end{Thm}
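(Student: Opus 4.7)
My plan is to combine the martingale fluctuation from Lemma~\ref{Lem-Z} with a ``typical decay'' lower bound for $M_n(0,x)\cdot 2^{nc}$ along $\mu_0$-almost every trajectory, in the spirit of the proof of Theorem~\ref{SinguDiff} but with $\mu_0$-typical rather than Lebesgue-typical $x$. The exponent $c$ is chosen accordingly: when $\mu_0$ is absolutely continuous, take $c=1$ and apply the Lebesgue differentiation theorem to obtain $M_n(0,x)\cdot 2^n \to \rho(x) > 0$ at $\mu_0$-a.e.\ $x$, where $\rho$ is the density of $\mu_0$; when $\mu_0$ is singular, a $\mu_0$-version of (\ref{Lem1-Leb}), proved by adapting \cite[Theorem~1.2]{O1} with $\mu_0$ in place of Lebesgue, yields $\lim_n(-\log_2 M_n(0,x))/n = h_{\mu_0} < 1$ $\mu_0$-a.s., and we fix any $c \in (h_{\mu_0},1)$, so $M_n(0,x)\cdot 2^{nc} \to +\infty$ $\mu_0$-a.s.

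For (i), fix $x$ in this $\mu_0$-full-measure set. Lemma~\ref{Lem-Z} gives $\eta(x) := \limsup_n |Z_{k,n}(x)| > 0$, which already suffices in the singular case. In the absolutely continuous case we need the stronger $\eta(x) = +\infty$ $\mu_0$-a.s., which comes from a martingale law of the iterated logarithm: under (L), $\{Y_i\}$ are i.i.d.\ bounded mean-zero random variables with positive variance by (\ref{Fund-Y}), so the classical LIL applies to $Z_{1,n}$ and (\ref{ZZ}) propagates inductively to all $k$; for $k=1$ without (L), $\{Y_i\}$ is a bounded ergodic martingale-difference sequence under $\mu_0$ and Stout's LIL applies. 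Put $h_n^- := x_n - x$ and $h_n^+ := x_n + 2^{-n} - x$, so $|h_n^\pm| \le 2^{-n}$. Using $|f_k(x_n + 2^{-n}) - f_k(x_n)| = |Z_{k,n}(x)|\cdot M_n(0,x)$ and the triangle inequality,
\[
\max\!\left(\frac{|f_k(x + h_n^+) - f_k(x)|}{|h_n^+|^c},\ \frac{|f_k(x + h_n^-) - f_k(x)|}{|h_n^-|^c}\right) \ge \tfrac{1}{2}|Z_{k,n}(x)|\cdot M_n(0,x)\cdot 2^{nc},
\]
and the right-hand side tends to $+\infty$ along the subsequence where $|Z_{k,n}(x)|$ achieves $\eta(x)$, proving (\ref{MTNI-k}).

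For (ii), $\mu_0$ has full support on $[0,1]$ (every dyadic subinterval has $\mu_0$-mass at least $p_{\min}(0)^n$ by (\ref{Basic2})), so any open interval $J$ meets the $\mu_0$-full-measure set; fix such $x_0 \in J$. The LIL in fact provides symmetric fluctuation $\limsup_n Z_{k,n}(x_0) = +\infty$ and $\liminf_n Z_{k,n}(x_0) = -\infty$ $\mu_0$-a.s., from the two-sided symmetry of the LIL together with the fact that $Y_i$ takes values of both signs (by (\ref{Fund-Y}) and the explicit formula under (L)). Choose subsequences $(n_j),(m_j)$ with $Z_{k,n_j}(x_0) \to +\infty$ and $Z_{k,m_j}(x_0) \to -\infty$, and so large that $[x_{0,n_j}, x_{0,n_j}+2^{-n_j}]$ and $[x_{0,m_j}, x_{0,m_j}+2^{-m_j}]$ lie in $J$. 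Then $(x,y) = (x_{0,n_j}+2^{-n_j}, x_{0,n_j})$ gives $(f_k(x) - f_k(y))/(x-y)^c = Z_{k,n_j}(x_0)\cdot M_{n_j}(0,x_0)\cdot 2^{n_j c} \to +\infty$, and the $m_j$-pairs yield $\to -\infty$ in the same way, proving (ii).

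The main obstacle is establishing the two $\mu_0$-a.s.\ inputs beyond Lemma~\ref{Lem-Z}: the Shannon--McMillan--Breiman-type estimate for $\mu_0$ in the singular case, and the two-sided martingale LIL for $\{Z_{k,n}\}$ under $\mu_0$ in the absolutely continuous case when $k=1$ and (L) fails. The uniform boundedness and non-degeneracy in (\ref{Basic2}) and (\ref{Fund-Y}) make both plausible, but careful treatment of the non-i.i.d.\ ergodic structure of $\{H_n(0,\cdot)\}$ under $\mu_0$ is required.
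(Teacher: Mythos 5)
Your treatment of (i) is close in spirit to the paper's: both compare $\Delta_k F$ with $Z_{k,n}$ via (\ref{minmax}) and then use a $\mu_0$-typical decay rate of $M_n(0,x)$, the $\mu_0$-analogue of (\ref{Lem1-Leb}), which the paper states as (\ref{Lem1-mu}) by citing \cite{O1}. For (ii), however, your route is genuinely different from the paper's, and this is where a real gap appears.

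The paper never invokes a pointwise law of the iterated logarithm in the proof of Theorem~\ref{MTNI}. For (ii) it works in expectation: (\ref{Z-infty}) together with the martingale property of $\{Z_{k,m}\}$ gives $\lim_m E[Z_{k,m}^{+}] = \lim_m E[Z_{k,m}^{-}] = +\infty$ on any dyadic subinterval of $J$; Azuma's inequality then bounds the $\mu_0$-mass of a bad set $A_m$ where $M_m(0,\cdot)$ decays too fast, so that for infinitely many $m$ there are points of the subinterval at which $Z_{k,m}^{+}$ is at least half its conditional expectation while $M_m \cdot 2^{cm}$ is bounded below. Combined with (\ref{minmax}) this yields (\ref{MTNI-plus}), and the mirror argument with $Z_{k,m}^{-}$ gives (\ref{MTNI-minus}). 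No almost-sure two-sided fluctuation of $\{Z_{k,n}(x)\}$ is ever used.

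The reason the paper avoids a pointwise LIL is exactly the step your proposal glosses over. You claim that under (L) the classical LIL for $Z_{1,n}$ ``propagates inductively to all $k$'' via (\ref{ZZ}). But (\ref{ZZ}) gives increments $Z_{k,n+1} - Z_{k,n} = k\,Y_{n+1}\,Z_{k-1,n}$, and $Z_{k-1,n}$ is unbounded; so the hypothesis of Lemma~\ref{LIL}, that $|S_i - S_{i-1}|$ be uniformly bounded above and below, fails for $k \ge 2$. The paper explicitly records this as unresolved: the footnote following Theorem~\ref{LMC} notes that if Lemma~\ref{LIL} could be applied to $\{Z_{k,n}\}$ for $k \ge 2$, then a stronger two-sided divergence and a $k \ge 2$ version of Theorem~\ref{LMC} would follow, which signals that the author does not know how to make your inductive step work. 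Your argument for $k=1$ (where Stout's LIL applies directly, and in the singular case even $\limsup_n |Z_{k,n}(x)| > 0$ from (\ref{strict}) with $M_n \cdot 2^{nc} \to \infty$ already suffices for (i)) is fine; but for $k \ge 2$, both in the absolutely continuous case of (i) and throughout (ii), the pointwise LIL for $\{Z_{k,n}\}$ would need to be replaced by the paper's expectation-divergence-plus-Azuma argument or justified by some other means.
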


(i) corresponds to Theorem \ref{SinguDiff} but here the limit diverges.    
If $\mu_0$ is singular, 
the asymptotic of $f_k$ {\it around  Lebesgue-a.e. points} are quite different from the ones {\it around $\mu_0$-a.s. points}.   
(ii) extends \cite[Theorem 14]{dA1}.    
The proof of \cite[Theorem 14]{dA1}\footnote{As far as the author sees, the proof of \cite[Theorem 14]{dA1} seems more complex than the one of \cite[Proposition 6]{dA1}.} is omitted in \cite{dA1}. 
However the reason that the proof of \cite[Proposition 6]{dA1} is not applied to {\it even} $k$ is not described in \cite{dA1}.   
We will give a proof applied to {\it all} $k$ together.   

For the proofs we first compare $\Delta_k F$ with $Z_{k,n}$ by (\ref{minmax}) and then estimate $F(x_n + 2^{-n}) - F(x_n)$ by (\ref{Lem1-mu}) below. 
For (i) we will give a lower bound for $\left|f_k(x_n + 2^{-n}) - f_k(x_n)\right|$ in terms of $|Z_{k, n}|$.  
Remark that $|Z_{k, n}|$ is positive by (\ref{strict}).
For (ii), by probabilistic techniques we will choose $x$ such that $f_k(x_n + 2^{-n}) - f_k(x_n)$ is ``larger" than the positive part of $Z_{k, n}$, roughly speaking.  

For $k=2$ we will give an example of graph of $f_2$ in Figure \ref{fig2} below.

\begin{proof}
By \cite[Lemma 2.3 (2) and Lemma 3.3]{O1},  
there is a constant $c \le 1$ such that 
\begin{equation}\label{Lem1-mu}
\limsup_{n \to \infty} \frac{-\log_2 M_n(0, x)}{n} \le c \text{ $\mu_0$-a.s.$x$.} 
\end{equation}
If $\mu_0$ is singular, $c < 1$.  
If $\mu_0$ is absolutely continuous, $c=1$.

By (\ref{minmax}) and (\ref{Lem1-mu}) 
\begin{equation*}
\max\left\{  \frac{\left|f_k(x) - f_k(x_{n})\right|}{(x - x_{n})^{c}}, \frac{\left|f_k(x_{n}+2^{-n}) - f_k(x)\right|}{(x_{n}+2^{-n} -  x)^{c}} \right\} 
\ge \frac{1}{2} \left|Z_{k, n}(x)\right|, 
\end{equation*}  
for large $n$ and $\mu_{0}\text{-a.s.} x \in (0,1) \setminus D$.
(\ref{strict}) implies \[ \limsup_{n \to \infty}  |Z_{k, n}(x)| > 0 \] holds for any $x$.
Thus we have (i).   

Fix $l$ and $n$.
Denote $E^{\mu_0}_{[(l-1)/2^n, l/2^n)}$ by $E$. 
$Z_{k,m}^{+}$ and $Z_{k,m}^{-}$ denotes the positive and negative parts of $Z_{k, m}$. 
$Z_{k, m}^{+} - Z_{k,m}^{-} = Z_{k,m}$ and   $Z_{k, m}^{+} + Z_{k,m}^{-} = |Z_{k,m}|$.   
Using (\ref{Z-infty}) and that $\left\{\left|Z_{k, m}\right|\right\}_m$ is a submartingale, 
\[ \lim_{m \to \infty} E[Z_{k,m}^{+}] + E[Z_{k,m}^{-}] = \lim_{m \to \infty} E[|Z_{k,m}|] = \sup_{m} E[|Z_{k,m}|] = +\infty.\]     
Since $\{Z_{k, m}\}_{m \ge n}$ is a martingale, 
$E[Z_{k,m}^{+}] - E[Z_{k,m}^{-}] = E[Z_{k,n}]$ for any $m \ge n$. 
Therefore 
\begin{equation}\label{Div-Z} 
\lim_{m \to \infty} E[Z_{k,m}^{+}] = \lim_{m \to \infty} E[Z_{k,m}^{-}] =  +\infty. 
\end{equation}  

Let \[ A_{m} := \left\{x : H_m(0,x)  \le 2^{-1-c_{1} m} \right\}.\]   
By Azuma's inequality (\cite[Chapter E.14]{W})
there are constants $c_1 \in [0,1], c_2, c_3 \in (0, +\infty)$ 
such that 
for any $m$, $\mu_{0}( A_m ) \le c_2\exp(-c_3m)$.  
This and (\ref{M1-1}) imply
\[ E\left[Z_{k, m}^{+}, A_m\right] \le C_k m^{k} \frac{\mu_0(A_{m})}{\mu_0\left( [(l-1)/2^n, l/2^n) \right)} \to 0, \, \, m \to \infty. \] 
If $\left\{Z_{k, m}^{+} \ge E\left[Z_{k, m}^{+} \right]/2\right\} \subset A_{m}$ for large $m$, 
\[ \limsup_{m \to \infty} E\left[Z_{k, m}^{+}\right] \le 2 \limsup_{m \to \infty} E\left[Z_{k, m}^{+}, A_m\right] = 0.\]  
This contradicts  (\ref{Div-Z}).    
Hence 
\[ \left\{Z_{k, m}^{+} \ge \frac{E[Z_{k, m}^{+}]}{2} \right\} \cap A_{m}^{c} \cap \left[\frac{l-1}{2^n}, \frac{l}{2^n}\right) \ne \emptyset \] 
holds for infinitely many $m$.   
Using this, (\ref{minmax}) and  (\ref{Lem1-mu}),    
for $c$ in (\ref{Lem1-mu})  
\begin{equation}\label{MTNI-plus}
\sup_{x, y \in \left[(l-1)/2^n, l/2^n\right], x > y} \frac{f_k(x) - f_k(y)}{(x-y)^c} = +\infty.   
\end{equation}

Since $\lim_{m \to \infty} E[Z_{k,m}^{-}] =  +\infty$, 
there is  $c \in [0,1]$ such that  
\begin{equation}\label{MTNI-minus}
\inf_{x, y \in \left[(l-1)/2^n, l/2^n\right], x > y} \frac{f_k(x) - f_k(y)}{(x-y)^c} = -\infty.  
\end{equation} 
(\ref{MTNI-plus}) and (\ref{MTNI-minus}) imply (ii).    
\end{proof}

\begin{figure}[h]
\centering
\includegraphics[width = 60mm, height = 40mm, bb = 0 0 846 594]{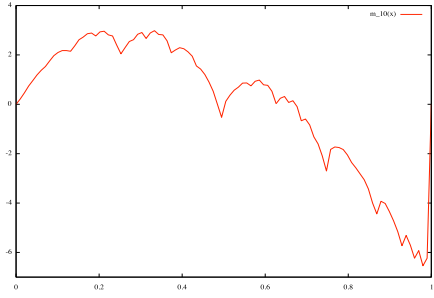}  
\caption{\small Graph of $f_2$ for $\displaystyle (b_1(t), c_0(t), c_1(t)) = \left(t + \frac{1}{3}, 0, 0\right)$.}  
\label{fig2}
\end{figure}


\section{Modulus of continuity} 

{\it In this section we always assume  (ND) holds and $k=1$. }  
First we will give some notation and lemmas. 
Second we will compare $\Delta_1 F(x,x+h)$ with $\log_2 (1/|h|)$ for $x \notin D$.  
Finally we will consider a modulus of continuity for $\Delta_1 F(x,x+h)$ at {\it$\mu_0$-a.s.$x$}.

Let \[ l(y, z) := \min\left\{i \ge 1 : X_{i}(y) \ne X_{i}(z) \right\}, \ \  y \ne z.\]    
Recall (\ref{Def-m}) for the definition of $m_1(z)$. 

\begin{Lem}[\text{\cite[Lemma 2]{Ko}}]\label{Kono-Lem}
Let $x \notin D$ and $h > 0$.   
Then \\
\textup{(i)} $\displaystyle \lim_{h \to 0, h > 0} l(x, x+h) = +\infty$. \\
\textup{(ii)} $l(x, x+h) \le m_1(h)$. \\
\textup{(iii)} $X_i(x) = X_i(x+h)$ for $1 \le i \le l(x,x+h)-1$. \\
\textup{(iv)} $X_{l(x, x+h)}(x) = 0$ and $X_{l(x, x+h)}(x+h) = 1$.\\
\textup{(v)} $X_i(x) = 1$ and $X_i(x+h) = 0$ for $l(x,x+h) < i \le m_1(h)-1$.  
\end{Lem}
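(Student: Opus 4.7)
The five assertions are combinatorial facts about the canonical binary expansions of $x$ and $y := x+h$, using the paper's convention (applied to both points) that no infinite tail of $1$'s occurs. The plan is to verify them in the order (iii)--(iv)--(i)--(ii)--(v), which is roughly the order of increasing difficulty.

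Part (iii) is the definition of $l(x,y)$. For (iv), I would write $h = y - x = \sum_{i \ge l}(X_i(y) - X_i(x))2^{-i}$ and set $d_i := X_i(y) - X_i(x) \in \{-1,0,1\}$. The canonical convention gives $\bigl|\sum_{i>l} d_i 2^{-i}\bigr| < 2^{-l}$ strictly, since equality would require either $d_i = +1$ for all $i > l$ (an infinite tail of $1$'s in $y$) or $d_i = -1$ for all $i > l$ (such a tail in $x$). Hence $\mathrm{sgn}(h) = \mathrm{sgn}(d_l)$, and $h > 0$ together with $d_l \ne 0$ forces $d_l = 1$, i.e., $X_l(x) = 0$ and $X_l(y) = 1$.

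For (i), fix $N \ge 1$; since $x \notin D$, the convention yields $x_N \le x < x_N + 2^{-N}$ strictly (equality on the right would demand an infinite tail of $1$'s in $x$). For $0 < h < x_N + 2^{-N} - x$, the point $x+h$ also lies in $[x_N, x_N + 2^{-N})$, so its first $N$ digits coincide with those of $x$ and $l(x,x+h) > N$; sending $N \to \infty$ gives (i). For (ii), assume to the contrary $l > m_1(h)$; then $x$ and $y$ share the dyadic prefix $x_{m_1(h)}$, so $|h| < 2^{-m_1(h)}$, while $X_{m_1(h)}(h) = 1$ gives $h \ge 2^{-m_1(h)}$, a contradiction.

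Part (v) is the main obstacle. Assume $l < m_1(h)$ (otherwise the claim is vacuous); then $h < 2^{-l}$, and (iv) rewrites as $\sum_{i>l} d_i 2^{-i} = h - 2^{-l}$. Expanding $2^{-l} = \sum_{i>l} 2^{-i}$ and $h = 2^{-m_1(h)} + \sum_{i > m_1(h)} X_i(h) 2^{-i}$ produces
\begin{equation*}
\sum_{i > l}(-d_i)2^{-i} = \sum_{l < i < m_1(h)} 2^{-i} + \sum_{i > m_1(h)}(1 - X_i(h))2^{-i}.
\end{equation*}
The coefficients on the left lie in $\{-1,0,1\}$ while those on the right lie in $\{0,1\}$, so this signed-binary identity does not a priori force digit-wise matching. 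I would close by contradiction: let $j_0$ be the smallest index in $(l, m_1(h))$, if any exists, with $-d_{j_0} \ne 1$. By minimality, $-d_i = 1$ for $l < i < j_0$; cancelling this common prefix leaves an identity in which the left-hand side is at most $2^{-j_0}$ (using $-d_{j_0} \le 0$ and $-d_i \le 1$, with equality requiring $-d_i = 1$, i.e., $X_i(x) = 1$, for every $i > j_0$---forbidden by the convention), while the right-hand side is at least $2^{-j_0}$. The resulting strict inequality contradicts the identity, so no such $j_0$ exists and $-d_i = 1$---equivalently $X_i(x) = 1$ and $X_i(y) = 0$---for every $l < i < m_1(h)$, which is (v).
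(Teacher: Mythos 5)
The paper cites this statement as \cite[Lemma 2]{Ko} and does not reproduce a proof, so there is no in-paper argument to compare against; I will assess your proof on its own terms.

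Your proof is correct. Parts (iii), (iv), (i), (ii) are routine consequences of the dyadic-expansion convention and you handle them cleanly; in (iv) the key point that $\bigl|\sum_{i>l}d_i 2^{-i}\bigr|<2^{-l}$ is \emph{strict} because equality would force a tail of $1$'s in one of $x$, $y$, and this is exactly what is needed. Part (v) is the only delicate assertion, since a representation $\sum_{i>l}(-d_i)2^{-i}$ with coefficients in $\{-1,0,1\}$ is not unique and termwise matching with the right-hand side cannot be read off directly. Your minimal-counterexample argument handles this correctly: after cancelling the initial block where $-d_i=1$, the left-hand side is $(-d_{j_0})2^{-j_0}+\sum_{i>j_0}(-d_i)2^{-i}$ with $-d_{j_0}\le 0$, and the tail sum is \emph{strictly} less than $2^{-j_0}$ because equality would again require $X_i(x)=1$ for all $i>j_0$, excluded by $x\notin D$; meanwhile the right-hand side retains the term $2^{-j_0}$ (using $j_0<m_1(h)$), hence the contradiction. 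One cosmetic nit: you write ``the left-hand side is at most $2^{-j_0}$'' when the argument in your parenthetical in fact establishes the strict inequality $<2^{-j_0}$ --- and strictness is essential, so it would be better to state it that way. Otherwise the argument is complete and correct.
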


Define 
\[ l_{x} := \min\left\{j > l(x, x+h) : X_j(x) = 0\right\} \text{ and } l_{x+h} := \min\left\{j > l(x,x+h) : X_{j}(x+h) = 1 \right\}.\]   
We have 
\begin{equation}\label{LLL}
(x+h)_{l_{x+h} - 1} = (x+h)_{l(x,x+h)} = x_{l_{x} - 1} + 2^{-(l_{x}-1)}.  
\end{equation}

\begin{Lem}[Key lemma]\label{Fund-LMC}
Let $x \notin D$ and $h > 0$. Then 
\begin{align}\label{Fund-f-Lem-1}
\Delta_1 F(x, x+h) &= \frac{F((x+h)_{l(x,x+h)}) - F(x)}{F(x+h)-F(x)} Z_{1,l_{x}}(x) \notag \\
&+ \frac{F(x+h) - F((x+h)_{l(x,x+h)})}{F(x+h)-F(x)} Z_{1, l_{x+h}}(x+h) + O(1).   
\end{align}
\end{Lem}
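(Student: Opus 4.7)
The plan is to split $[x, x+h]$ at the intermediate dyadic point $y := (x+h)_{\ell}$ with $\ell := l(x, x+h)$. From the definition of $\Delta_1 F$ one immediately has
\[
\Delta_1 F(x, x+h) = \frac{F(y) - F(x)}{F(x+h) - F(x)} \, \Delta_1 F(x, y) + \frac{F(x+h) - F(y)}{F(x+h) - F(x)} \, \Delta_1 F(y, x+h),
\]
so (since the two weighting ratios sum to $1$) the lemma reduces to the two mixed-endpoint estimates
\[ \Delta_1 F(x, y) = Z_{1, l_x}(x) + O(1), \qquad \Delta_1 F(y, x+h) = Z_{1, l_{x+h}}(x+h) + O(1). \]

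I would prove the first estimate as follows. Since $X_{l_x}(x) = 0$, one has $x_{l_x} = x_{l_x - 1}$; combined with (\ref{LLL}) this gives $y = x_{l_x - 1} + 2^{-(l_x - 1)}$, so $[x_{l_x}, y]$ is a level-$(l_x - 1)$ dyadic cell containing $x$. Write
\[
\partial_t F(0, y) - \partial_t F(0, x) = \bigl[\partial_t F(0, y) - \partial_t F(0, x_{l_x})\bigr] - \bigl[\partial_t F(0, x) - \partial_t F(0, x_{l_x})\bigr].
\]
The first bracket equals $\partial_t M_{l_x - 1}(0, x) = Z_{1, l_x - 1}(x) M_{l_x - 1}(0, x)$ by (\ref{Lem1-1}). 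For the second bracket I would telescope $\partial_t F(0, x) - \partial_t F(0, x_{l_x}) = \sum_{j \ge l_x,\, X_{j+1}(x) = 1}\partial_t[P_j(t, x) M_j(t, x)]|_{t=0}$ along the bits of $x$; replacing $Z_{1, j}(x)$ by $Z_{1, l_x}(x) + O(j - l_x)$ (from (\ref{ZY}) and (\ref{Fund-Y})), weighting by the geometric decay $M_j(0, x) \le p_{\text{max}}(0)^{j - l_x} M_{l_x}(0, x)$ (from (\ref{Basic2})), and using $\sum_{j \ge l_x,\, X_{j+1}(x) = 1} P_j(0, x) M_j(0, x) = F(x) - F(x_{l_x})$ gives
\[
\partial_t F(0, x) - \partial_t F(0, x_{l_x}) = Z_{1, l_x}(x)\bigl(F(x) - F(x_{l_x})\bigr) + O(M_{l_x}(0, x)).
\]

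Combining the two brackets and using $Z_{1, l_x - 1}(x) = Z_{1, l_x}(x) + O(1)$ (again from (\ref{ZY}) and (\ref{Fund-Y})), the dominant $Z_{1, l_x}$-proportional contributions collapse into $Z_{1, l_x}(x) \cdot \bigl[M_{l_x - 1}(0, x) - (F(x) - F(x_{l_x}))\bigr] = Z_{1, l_x}(x) (F(y) - F(x))$ with an $O(M_{l_x - 1}(0, x))$ remainder; dividing by $F(y) - F(x) \ge M_{l_x - 1}(0, x) - M_{l_x}(0, x) \ge (1 - p_{\text{max}}(0)) M_{l_x - 1}(0, x)$ yields $\Delta_1 F(x, y) = Z_{1, l_x}(x) + O(1)$. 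The second estimate follows by the mirror-image argument: $(x+h)_{l_{x+h}} = y + 2^{-l_{x+h}}$ places $x+h$ in the level-$(l_{x+h} - 1)$ dyadic cell $[y, y + 2^{-(l_{x+h} - 1)}]$ to the right of $y$, and the analogous telescoping/cancellation scheme carries through (alternatively, invoke the duality (\ref{Dual})).

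The main obstacle will be the cancellation step. Individually, both $Z_{1, l_x - 1}(x) M_{l_x - 1}(0, x)$ and $Z_{1, l_x}(x)(F(x) - F(x_{l_x}))$ are of order $|Z_{1, l_x}(x)| \cdot M_{l_x - 1}(0, x) = O(l_x \, M_{l_x - 1}(0, x))$, and since $F(y) - F(x)$ is only comparable to $M_{l_x - 1}(0, x)$, either term alone would yield merely an $O(l_x)$ bound. The $O(1)$ conclusion holds only because the two pieces combine so that the surviving coefficient of $Z_{1, l_x}(x)$ is exactly $F(y) - F(x)$, and only then do the leftover remainders genuinely qualify as $O(1)$.
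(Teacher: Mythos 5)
Your proposal is correct and follows essentially the same route as the paper's proof: both telescope $\partial_t F(0,\cdot)$ along the dyadic expansion via (\ref{Lem1-2}), control the remainder using (\ref{Basic2}) and (\ref{Fund-Y}), and use the matching identity (\ref{LLL}) to connect the contributions from $x$ and $x+h$ across the intermediate dyadic point $y=(x+h)_{l(x,x+h)}$. The paper organizes this by proving the single reusable estimate $f_1(z)-f_1(z_k)=(F(z)-F(z_k))Z_{1,k}(z)+O\bigl(F(z_k+2^{-k})-F(z_k)\bigr)$ and applying it at $z=x,\,k=l_x-1$ and $z=x+h,\,k=l_{x+h}-1$, whereas you perform the convex-combination split at $y$ first and then re-derive the same telescoping inline for each half (noting $x_{l_x}=x_{l_x-1}$ makes your two-bracket decomposition identical to the paper's three-term split), but the underlying cancellation and bounds are the same.
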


\begin{proof}
By (\ref{Lem1-2}) 
\[ f_1(x_{n+1}) - f_1(x_{n}) = X_{n+1}(x) \left( F(x_{n}+2^{-n-1}) - F(x_{n}) \right) \left(Z_{1,n}(x) + \overline{Y_n}(x) \right), \]   
\[ \text{where } \ \ \overline{Y_n}(x) := - \frac{H_n(0, x_n + 2^{-n-1})}{H_n(0, x_n)} Y_{n+1}(x_n + 2^{-n-1}).\]       

$\{\overline{Y_n}\}_n$ are bounded by (\ref{Basic1}) and (\ref{Fund-Y}).  
Summing up over $n$,  
\[ f_1(x) - f_1(x_{k}) = (F(x) - F(x_{k})) Z_{1,k}(x) + J(x, k) \]
\begin{equation*}
\text{where }  \ J(x, k) := \sum_{n = k}^{\infty} X_{n+1}(x) \left( F(x_{n}+2^{-n-1}) - F(x_{n}) \right) \left(\sum_{i=k+1}^{n} Y_i(x) + \overline{Y_n}(x) \right).  
\end{equation*} 
(\ref{Basic1}) and (\ref{Fund-Y}) imply $J(x, k) = O\left(F(x_{k}+2^{-k}) - F(x_{k})\right)$. 
Therefore 
\[ f_1(x+h) - f_1((x+h)_{l_{x+h} - 1}) = (F(x+h) - F((x+h)_{l_{x+h}-1})) \left(Z_{1, l_{x+h}}(x+h) + O(1)\right) \text{ and } \]
\[ f_1(x) - f_1(x_{l_{x}-1}) = (F(x) - F(x_{l_{x} - 1}))Z_{1,l_{x}-1}(x) + O\left(F(x_{l_x - 1}+2^{-(l_{x}-1)}) - F(x_{l_x - 1})\right).   \]
(\ref{LLL}) implies  
\[  f_1((x+h)_{l_{x+h} - 1}) -  f_1(x_{l_{x}-1}) = \left(F((x+h)_{l_{x+h}-1}) - F(x_{l_{x}-1})\right) Z_{1,l_{x}-1}(x). \]

Therefore 
\begin{align*}
f_1(x+h) -  f_1(x)  
&= \left(F((x+h)_{l_{x+h}-1}) - F(x)\right) Z_{1,l_{x}}(x) \notag\\
&+ \left(F(x+h) - F((x+h)_{l_{x+h}-1})\right) \left(Z_{1, l_{x+h}}(x+h) + O(1)\right) \notag\\
&+ O\left(F(x_{l_x - 1}+2^{-(l_{x}-1)}) - F(x_{l_x - 1})\right). 
\end{align*}   
Since $F(x_{l_x - 1}+2^{-(l_{x}-1)}) - F(x_{l_x - 1}) = O\left(F(x+h) - F(x)\right)$  
we have (\ref{Fund-f-Lem-1}).  
\end{proof}

\subsection{Modulus of continuity at non-dyadic rationals}  

Recall (\ref{Def-m}) for the definition of $\{m_n(z)\}_n$.   

\begin{Thm}\label{E-AK-JMAA} 
Assume $x \notin D$. 
Then 
\[ \lim_{h \to 0, h > 0} \frac{\Delta_1 F(x,x+h)}{\log_2 (1/h)} \text{ exists as a real number } \]
if and only if 
\[ \lim_{n \to \infty} \frac{m_{n+1}(1-x)}{m_{n}(1-x)} = 1 \text{ and } \lim_{n \to \infty} \frac{Z_{1,n}(x)}{n} \text{ exists.} \]
If they hold, 
\[ \lim_{h \to 0, h > 0} \frac{\Delta_1 F(x,x+h)}{\log_2 (1/h)} = \lim_{n \to \infty} \frac{Z_{1,n}(x)}{n}. \]
\end{Thm}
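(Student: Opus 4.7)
The plan is to use Lemma \ref{Fund-LMC} as the main engine, reducing $\Delta_1 F(x,x+h)$ to a convex combination of $Z_{1,l_x}(x)$ and $Z_{1,l_{x+h}}(x+h)$ up to a bounded error. Throughout, set $l := l(x,x+h)$. Since $X_l(x)=0$, there is $n(h)$ with $l = m_{n(h)}(1-x)$, and by the definition of $l_x$, $l_x = m_{n(h)+1}(1-x)$, with $n(h) \to \infty$ as $h \to 0^+$. Also $\log_2(1/h) = m_1(h)+O(1)$, and by Lemma \ref{Kono-Lem} we have $l \le m_1(h) \le l_x$, giving
\[
1 \le \frac{m_1(h)}{l} \le \frac{l_x}{l} = \frac{m_{n(h)+1}(1-x)}{m_{n(h)}(1-x)}.
\]
A parallel sandwich, using carry propagation around position $m_1(h)$ in $x+h$, bounds $l_{x+h}/m_1(h)$ by the same ratio. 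Consequently, the first condition of the theorem is equivalent to each of $l/\log_2(1/h) \to 1$, $l_x/\log_2(1/h) \to 1$, and $l_{x+h}/\log_2(1/h) \to 1$.

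For the sufficiency direction, assume both conditions, and let $L := \lim_n Z_{1,n}(x)/n$. Dividing the key lemma by $\log_2(1/h)$,
\[
\frac{\Delta_1 F(x,x+h)}{\log_2(1/h)} = \alpha(h)\,\frac{Z_{1,l_x}(x)}{\log_2(1/h)} + \beta(h)\,\frac{Z_{1,l_{x+h}}(x+h)}{\log_2(1/h)} + o(1),
\]
with $\alpha(h),\beta(h) \ge 0$ and $\alpha(h)+\beta(h)=1$. The first ratio factors as $(Z_{1,l_x}(x)/l_x)(l_x/\log_2(1/h))$ and tends to $L$. For the second, $Y_i(x+h)=Y_i(x)$ for $i \le l-1$ and $|Y_i|$ is uniformly bounded by (\ref{Fund-Y}), so $Z_{1,l_{x+h}}(x+h) = Z_{1,l-1}(x) + O(l_{x+h}-l+1)$; the error is $o(\log_2(1/h))$ under the first condition, whence this ratio also tends to $L$ and the convex combination converges to $L$.

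For the necessity direction, assume $\lim_{h\to 0^+} \Delta_1 F(x,x+h)/\log_2(1/h) = L$. First establish the first condition by contradiction: if $m_{n_k+1}(1-x)/m_{n_k}(1-x) \ge 1+\varepsilon$ along a subsequence, set $l_k := m_{n_k}(1-x)$ and consider two test values of $h$ sharing $l(x,x+h) = l_k$, namely (a) $h_k^{(1)}$ defined by $x+h_k^{(1)} = x_{l_k-1}+2^{-l_k}$, for which $\alpha(h_k^{(1)})=1$ and the key lemma collapses to $\Delta_1 F = Z_{1,m_{n_k+1}(1-x)}(x)+O(1)$ with $\log_2(1/h_k^{(1)}) = m_{n_k+1}(1-x)+O(1)$; and (b) $h_k^{(2)} := 2^{-l_k}$, for which $\log_2(1/h_k^{(2)}) = l_k+O(1)$. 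Comparing the two ratios $\Delta_1 F/\log_2(1/h)$ (whose numerators are linked by the key lemma and by the boundedness of $|Y_i|$) forces a contradiction since the denominators differ by the factor $1+\varepsilon$. Granted the first condition, the sequence $h_n^{(1)}$ directly yields $Z_{1,m_{n+1}(1-x)}(x)/m_{n+1}(1-x) \to L$, and since $|Z_{1,k}(x)-Z_{1,m_j(1-x)}(x)| = O(k-m_j(1-x)) = o(m_j(1-x))$ for $m_j(1-x) \le k \le m_{j+1}(1-x)$ (by the first condition and (\ref{Fund-Y})), the full limit $Z_{1,n}(x)/n \to L$ follows.

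The main obstacle is controlling $l_{x+h}$ and $Z_{1,l_{x+h}}(x+h)$ rigorously, because the digits of $x+h$ at positions $\ge m_1(h)$ depend on carry propagation not explicitly described by Lemma \ref{Kono-Lem}. A case split on whether the carry terminates at $m_1(h)$ or continues further, combined with careful tracking of the $O(1)$ terms in the key lemma, is needed to make both sides of the equivalence rigorous. The contradiction in the necessity step further requires verifying that the bounded errors in the key lemma remain dominated by the gap $\varepsilon$, which in turn requires some uniform control on the weights $\alpha(h), \beta(h)$ along each test sequence.
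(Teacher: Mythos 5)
You correctly identify Lemma \ref{Fund-LMC} as the engine, and your treatment of the first term---factoring $Z_{1,l_x}(x)/\log_2(1/h) = (Z_{1,l_x}(x)/l_x)(l_x/\log_2(1/h))$ and using the sandwich $l(x,x+h) \le m_1(h) \le l_x = m_{n(h)+1}(1-x)$---is exactly the paper's proof of (\ref{easy}) in Proposition \ref{GMC-1}. The genuine gap is your asserted ``parallel sandwich'' bounding $l_{x+h}/m_1(h)$ by the ratio $m_{n(h)+1}(1-x)/m_{n(h)}(1-x)$; this is false. The quantity $l_{x+h}$ is the first position after $l(x,x+h)$ where $x+h$ carries a $1$, and beyond $m_1(h)$ the digits of $x+h$ are \emph{not} controlled by the positions of $0$s in $x$. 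For instance, if $h = 2^{-m}$ and $X_m(x) = 1$, the carry kills all digits between $l(x,x+h)$ and $m$, and then $X_j(x+h) = X_j(x)$ for $j > m$, so $l_{x+h}$ is governed by the gaps between consecutive \emph{ones} of $x$---i.e.\ the sequence $m_n(x)$, about which the hypothesis $m_{n+1}(1-x)/m_n(1-x) \to 1$ says nothing. Thus $l_{x+h} - l(x,x+h)$ can be arbitrarily large, and your bound $O(l_{x+h} - l + 1) = o(\log_2(1/h))$ in the sufficiency direction does not follow.

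The paper closes this in the proof of (\ref{difficult}): rather than bound $l_{x+h}$, it uses (\ref{Basic2}) to show the weight $\frac{F(x+h) - F((x+h)_{l_{x+h}-1})}{F(x+h)-F(x)}$ decays geometrically in $l_{x+h} - l(x,x+h)$ (as $C^{l_x - l}c^{l_{x+h}-l}$ with $c<1$), so the product of the weight and the factor $(l_{x+h}-l)/m_1(h)$ still tends to $0$ even along sequences with $l_{x+h}$ far larger than $l$. This quantitative decay is the missing ingredient; a case split on carry termination and bookkeeping of $O(1)$ terms, as you propose in your closing paragraph, will not supply it. Similarly, the paper's necessity argument is Proposition \ref{Large-fluc}, which takes dyadic test scales $h = 2^{-m_{n(j)}(1-x)+2}$ and $h = 2^{-m_{n(j)}(1-x)}$ and invokes Lemma \ref{H-exp-fast} to obtain a \emph{lower} bound on the relevant weight; your sequences $h_k^{(1)}, h_k^{(2)}$ are similar in spirit, but the uniform weight control you note is needed is exactly where the exponential estimates from Lemma \ref{H-exp-fast} and (\ref{Basic2}) enter, and without them the contradiction does not go through.
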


Considering also the limit from left 
we have Corollary \ref{bothside}, which extends \cite[Theorem 5.4]{AK-JMAA}.  
\cite{AK-JMAA} uses K\^ono's expression \cite[Lemma 3]{Ko}.  
If (L) holds then we may expect a counterpart of \cite[Lemma 3]{Ko}.    
However if (L) fails then it seems impossible to obtain a counterpart of \cite[Lemma 3]{Ko}.  
The key point is Lemma \ref{Fund-LMC} above,  
which states 
$\Delta_1 F(x, x+h)$ is between $Z_{1, l_{x}}(x)$ and $Z_{1, l_{x+h}}(x+h)$, roughly speaking.   
In Propositions \ref{GMC-1} and \ref{Large-fluc} below 
we will investigate the asymptotic of $\Delta_1 F(x, x+h) - Z_{1, l(x, x+h)}(x)$.      
These results are different depending on the asymptotic for $\dfrac{m_{n+1}(1-x)}{m_{n}(1-x)}$.   
Eliminate parts consisting of the differentials by estimating $Z_{1, l_{x}}(x) - Z_{1, l(x, x+h)}(x)$ and $Z_{1, l_{x+h}}(x+h) - Z_{1, l(x, x+h)}(x)$. 
Finally consider quantities expressed by $F$, $x$ and  $h$ only as in (\ref{easy}) and (\ref{difficult}) below.

\begin{Prop}\label{GMC-1}
If $x \notin D$ and $\displaystyle\lim_{n \to \infty} \dfrac{m_{n+1}(1-x)}{m_n(1-x)} = 1$,
\[ \lim_{h \to 0, h > 0} \frac{\Delta_1 F(x,x+h) - Z_{1, \lfloor \log_2 (1/h) \rfloor}(x)}{\log_2 (1/h)} = 0. \]
Here $\lfloor x \rfloor$ denotes the maximal integer less than or equal to $x$. 
\end{Prop}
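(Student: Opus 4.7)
The plan is to apply Lemma \ref{Fund-LMC} and to compare both $Z_{1, l_x}(x)$ and $Z_{1, l_{x+h}}(x+h)$ to $Z_{1, n}(x)$ (with $n := \lfloor \log_2(1/h) \rfloor$) up to an $o(n)$ error. Abbreviate $L := l(x, x+h)$, $L_1 := l_x$, $L_2 := l_{x+h}$, $M := m_1(h)$; by the definition of $m_1$, $|M - n| \le 1$, so it will suffice to prove the assertion with $n$ replaced by $L$, $L_1$, or $M$.

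First I would locate the indices. Lemma \ref{Kono-Lem}(ii)--(v) gives $L \le M$, $X_L(x) = 0$, and $X_{L+1}(x) = \cdots = X_{M-1}(x) = 1$. Writing $L = m_j(1-x)$ (so that $L$ is the $j$-th $0$ of $x$), the next $0$ of $x$ lies at position $\ge M$, hence $L_1 = m_{j+1}(1-x) \ge M$. Since $h \to 0^+$ forces $L \to \infty$ (Lemma \ref{Kono-Lem}(i)), equivalently $j \to \infty$, the hypothesis $m_{j+1}(1-x)/m_j(1-x) \to 1$ gives $L_1/L \to 1$; sandwiched by $L \le M \le L_1$, also $M/L \to 1$ and $n/L \to 1$, so $|L-n|,\, L_1 - L = o(n)$. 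Combining (\ref{Fund-Y}) with $Y_i(x) = Y_i(x+h)$ for $i < L$ (both depend only on the first $i$ binary digits), one obtains $|Z_{1, L_1}(x) - Z_{1, n}(x)| = O(|L_1 - n|) = o(n)$ and $|Z_{1, L_2}(x+h) - Z_{1, L}(x)| = O(L_2 - L + 1)$. Inserting into Lemma \ref{Fund-LMC} and using $A + B = 1$ together with $|Z_{1, L}(x) - Z_{1, n}(x)| = o(n)$, the claim reduces to
\[
B \cdot (L_2 - L) = o(n).
\]

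The main obstacle is this last estimate. The numerator of $B$ is controlled by Lemma \ref{H-exp-fast} (exponential contraction of $G_0$ to its fixed point, with limiting $p_0$-value $b_1(c_0+1)$): since $(x+h)_L$ has only zero digits beyond position $L$ and $[(x+h)_L, x+h]$ lies in the dyadic of scale $L_2 - 1$ based at $(x+h)_L$,
\[
F(x+h) - F((x+h)_L) \le C\, M_L(0, x)\, q^{L_2 - L}, \qquad q := b_1(c_0+1) \in (0,1).
\]
For the denominator, $[x, x+h]$ contains the $L_1$-subdyadic $[x_{L_1 - 1} + 2^{-L_1},\, x_{L_1 - 1} + 2^{-(L_1-1)})$ (which straddles $(x+h)_L$, thanks to $X_{L_1}(x) = 0$ and the intermediate run of $1$'s in $x$), whose $\mu_0$-mass is $\ge c\, M_L(0, x)\, r^{L_1 - L}$ with $r \in (0, 1]$ derived from the $G_1$-contraction. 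Because $L_1 - L = o(L)$, the factor $r^{L_1 - L}$ is $2^{-o(L)}$. The resulting bound $B(L_2 - L) \le C'(L_2 - L)\, q^{L_2 - L} / r^{L_1 - L}$ is then $o(n)$ by splitting into the two cases $L_2 \le L_1$ (where $L_2 - L \le L_1 - L = o(n)$ and the trivial bound $B \le 1$ suffices) and $L_2 > L_1$ (where the exponentially decaying factor $q^{L_2 - L_1}$ dominates both $(L_2 - L)$ and the subexponential growth $r^{-(L_1 - L)} = 2^{o(L)}$). Dividing by $n$ and letting $h \to 0^+$ then yields the claim.
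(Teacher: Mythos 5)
Your overall strategy mirrors the paper's: apply Lemma \ref{Fund-LMC}, reduce to $B\,(L_2-L)=o(n)$ with $B=\frac{F(x+h)-F((x+h)_{L})}{F(x+h)-F(x)}$, and bound $B$ above by a ratio of $M$-values using Lemma \ref{H-exp-fast}. The bookkeeping of indices ($L\le M\le L_1$, the sandwich giving $L_1/L,\,M/L,\,n/L\to 1$), the bounds on $|Z_{1,L_1}(x)-Z_{1,n}(x)|$ and $|Z_{1,L_2}(x+h)-Z_{1,L}(x)|$ via (\ref{Fund-Y}), and the upper bound $B\le C'\,q^{L_2-L}/r^{L_1-L}$ with $q=b_1(1+c_0)$ and $r$ the limiting $H$-value along a run of $1$'s, are all sound and consistent with the paper's estimates.

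However, the final case split has a genuine gap. You split on $L_2\le L_1$ (fine, via $B\le 1$) versus $L_2>L_1$, and in the latter case claim that ``the exponentially decaying factor $q^{L_2-L_1}$ dominates both $(L_2-L)$ and the subexponential growth $r^{-(L_1-L)}=2^{o(L)}$.'' This is not justified when $q>r$ (which happens, e.g., for $b_1=2/3$, $c_0=c_1=0$: $q=2/3$, $r=1/3$). In the sub-case $L_2>L_1$ with $L_2-L_1$ small (say $=1$) while $L_1-L$ is large (say of order $\sqrt{n}$), your bound becomes roughly
\[
(L_2-L)\,q^{L_2-L}/r^{L_1-L}\approx \sqrt{n}\,q\,(q/r)^{\sqrt{n}},
\]
which diverges far faster than $n$. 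The point is that $q^{L_2-L_1}$ only helps when $L_2-L_1$ is \emph{comparable to $L$}, which the condition $L_2>L_1$ does not guarantee. The paper avoids this by splitting instead on $L_2-L\le\epsilon L$ (trivial bound $B\le 1$ gives $(*)\le\epsilon$) versus $L_2-L>\epsilon L$; in the second case $L_2-L_1\ge(\epsilon-o(1))L$ is genuinely of order $L$, and one chooses $\delta(\epsilon)$ with $C^{\delta(\epsilon)}q^{\epsilon/2}\le 1$, using $L_1/L\to 1$ to ensure $L_1-L\le\delta(\epsilon)L$ for small $h$, so that $C^{L_1-L}q^{(L_2-L)/2}\le 1$ and $(*)\le (L_2-L)q^{(L_2-L)/2}/L\to 0$. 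Your gap is fixable---e.g.\ split on $L_2-L\le 2K(L_1-L)$ vs.\ $>2K(L_1-L)$ with $K=\log(1/r)/\log(1/q)$, using $B\le 1$ in the first case and the exponential decay in the second---but as written the dichotomy $L_2\le L_1$ vs.\ $L_2>L_1$ with the stated dominance claim is incorrect.
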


\begin{proof}
First we remark $\displaystyle \lim_{h \to 0, h > 0} \frac{l(x,x+h)}{m_1(h)} = 1$ by the assumptions.  
By (\ref{Fund-Y}), (\ref{ZY}) and  (\ref{Fund-f-Lem-1}) 
it suffices to show 
\begin{equation}\label{easy}
\lim_{h \to 0, h > 0} \frac{F((x+h)_{l_{x+h}-1}) - F(x)}{F(x+h)-F(x)} \cdot \frac{l_{x} - m_1(h)}{m_1(h)} = 0 \text{ and }
\end{equation}
\begin{equation}\label{difficult}
 \lim_{h \to 0, h > 0} \frac{F(x+h) - F((x+h)_{l_{x+h}-1})}{F(x+h)-F(x)} \cdot \frac{ l_{x+h} - l(x,x+h) + m_1(h) - l(x,x+h)}{m_1(h)} = 0. 
\end{equation}

Using Lemma \ref{Kono-Lem} and $\displaystyle\lim_{n \to \infty} \dfrac{m_{n+1}(1-x)}{m_n(1-x)} = 1$  
we have (\ref{easy}).     

We now show (\ref{difficult}). 
We will give an upper bound for 
\[ (*) := \frac{F(x+h) - F((x+h)_{l_{x+h}-1})}{F(x+h)-F(x)} \times \left(\frac{l_{x+h}}{l(x,x+h)} - 1\right).  \]
If $l_{x+h}  \le (1+\epsilon)l(x,x+h)$,  $(*) \le \epsilon$. 

Using (\ref{Basic2}) and \[ M_{l(x,x+h)-1}\left(0, (x+h)_{l_{x+h}-1}\right) =  M_{l(x,x+h)-1}\left(0, x_{l_{x}} + 2^{-l_{x}}\right), \]     
there are constants $0 < c < 1 \le C < +\infty$ such that 
\begin{align*} 
\frac{F(x+h) - F((x+h)_{l_{x+h}-1})}{F(x+h)-F(x)} &\le \frac{M_{l_{x+h}-1}\left(0, (x+h)_{l_{x+h}-1}\right)}{M_{l_{x}}\left(0, x_{l_{x}} + 2^{-l_{x}}\right)} \\
&= C^{l_x - l(x,x+h)} c^{l_{x+h} - l(x,x+h)} 
\end{align*}    

For any $\epsilon > 0$  
there is $\delta(\epsilon) > 0$ with $c^{\epsilon/2} < C^{\delta(\epsilon)}$.    
Therefore if $l_{x+h}  \ge (1+\epsilon)l(x,x+h)$ and $h$ is sufficiently small,  $l_{x} \le (1+\delta(\epsilon))l(x,x+h)$ and  
\[  \left(\frac{l_{x+h}}{l(x,x+h)} - 1\right) \left(\frac{c^{l_{x+h}/l(x,x+h) - 1}}{C^{l_{x}/l(x,x+h) - 1}} \right)^{l(x,x+h)} 
\le \frac{(l_{x+h}  - l(x,x+h)) c^{(l_{x+h} - l(x,x+h))/2}  }{l(x,x+h)}. \]
Hence $(*) \le \epsilon$. 
Thus we have (\ref{difficult}).  
\end{proof}

\begin{Prop}\label{Large-fluc}
If $x \notin D$ and $\displaystyle \limsup_{n \to \infty} \dfrac{m_{n+1}(1-x)}{m_{n}(1-x)} > 1$, 
\[ \frac{\Delta_1 F(x, x+h)}{\log_2 (1/h)} \text{ fluctuates if $h \to 0, h > 0$.  }\] 
\end{Prop}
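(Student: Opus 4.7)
The plan is to exhibit two sequences $h_k, h'_k \to 0^+$ along which $\Delta_1 F(x, x+h)/\log_2(1/h)$ has distinct subsequential limits, thereby establishing fluctuation.

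From the hypothesis, extract $\delta > 0$ and indices $n_k \to \infty$ with $m_{n_k+1}(1-x) \ge (1+\delta) m_{n_k}(1-x)$; by a further subsequence we may also assume $m_{n_k+1}(1-x)/m_{n_k}(1-x)$ is bounded above. Set $N_k := m_{n_k}(1-x)$ and $M_k := m_{n_k+1}(1-x)$. Since $m_j(1-x)$ indexes the $j$-th $0$-digit of $x$, we have $X_{N_k}(x) = X_{M_k}(x) = 0$ and $X_i(x) = 1$ for all $N_k < i < M_k$: the expansion of $x$ has arbitrarily long blocks of $1$'s nested between two $0$-digits. Then take
\[ h_k := 2^{-N_k}, \qquad h'_k := 2^{-(M_k-1)} + 2^{-M_k}, \]
so that $\log_2(1/h_k) \sim N_k$ and $\log_2(1/h'_k) \sim M_k \ge (1+\delta) N_k$. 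A direct carry analysis shows $l(x, x+h_k) = l(x, x+h'_k) = N_k$, hence $l_x = M_k$ for both; moreover $l_{x+h_k} = N_k + 1$ and, for $h'_k$, the carry from $2^{-(M_k-1)}$ clears the block of $1$'s of $x$ in $(N_k, M_k)$ while the extra $2^{-M_k}$-bit sets $X_{M_k}(x+h'_k) = 1$, giving $l_{x+h'_k} = M_k$.

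Apply Lemma \ref{Fund-LMC}, writing $\Delta_1 F(x, x+h) = \alpha_h Z_{1,l_x}(x) + (1-\alpha_h)Z_{1,l_{x+h}}(x+h) + O(1)$ with $\alpha_h \in [0,1]$. The common term $Z_{1,M_k}(x)$ satisfies, by (\ref{ZY}), (\ref{Fund-Y}) and the sign rule that $Y_i(x) > 0$ exactly when $X_i(x) = 0$,
\[ Z_{1,M_k}(x) - Z_{1,N_k}(x) \asymp -\,\delta N_k. \]
In contrast, $Z_{1,l_{x+h_k}}(x+h_k) - Z_{1,N_k}(x) = O(1)$, because $x+h_k$ and $x$ share their first $N_k-1$ digits. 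The parallel computation for $x+h'_k$ (whose digits in $(N_k,M_k)$ are all zero) yields
\[ Z_{1,l_{x+h'_k}}(x+h'_k) - Z_{1,N_k}(x) \asymp +\,\delta N_k. \]
After extracting further subsequences so that $Z_{1,N_k}(x)/N_k$ and the weights $\alpha_{h_k}, \alpha_{h'_k}$ converge, the two normalized quantities $\Delta_1 F(x, x+h)/\log_2(1/h)$ obtain distinct limits: they differ both in \emph{normalization} (factor $1+\delta$ from $M_k/N_k$) and in the \emph{sign} of the leading $\delta$-order correction. Should the linear combinations happen to coincide for some choice of $\alpha$-limits, introduce a third interpolating sequence $h''_k := 2^{-(N_k + \lfloor \delta N_k/3 \rfloor)}$ satisfying $l(x, x+h''_k) = N_k$ and $\log_2(1/h''_k) \sim (1+\delta/3) N_k$; the resulting over-determined system of equations for the hypothetical common limit $L$ forces a contradiction.

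The main obstacle is controlling the weights $\alpha_h = [F((x+h)_{N_k}) - F(x)]/[F(x+h) - F(x)]$ in the singular case, where $\mu_0$ can vary erratically across dyadic blocks. This requires the upper bound $\mu_0(I) \le p_{\max}(0)^n$ from (\ref{Basic2}) for dyadic intervals of level $n$ together with matching lower bounds in terms of $p_{\min}(0)$, and a careful geometric analysis of how the intervals $[x, (x+h)_{N_k}]$ and $[(x+h)_{N_k}, x+h]$ sit across the dyadic grids of levels $N_k, M_k - 1$ and $M_k$.
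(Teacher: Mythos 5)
Your plan — pick test sequences $h_k, h_k'$ that probe the long $1$-block between the dyadic positions $N_k = m_{n_k}(1-x)$ and $M_k = m_{n_k+1}(1-x)$, and feed them through Lemma~\ref{Fund-LMC} to split $\Delta_1 F(x,x+h)$ into a convex combination of two $Z_{1,\cdot}$-values plus $O(1)$ — is the right starting point, and your digit/carry bookkeeping ($l(x,x+h_k)=l(x,x+h'_k)=N_k$, $l_x=M_k$, $l_{x+h_k}=N_k+1$, $l_{x+h'_k}=M_k$) is correct. The paper's proof also exploits the $1$-block and Lemma~\ref{Fund-LMC}, though with $h=2^{-(M_k-2)}$ versus $h=2^{-M_k}$ rather than your pair. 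So you are in the right neighborhood, but the argument as written has two genuine gaps, and they are in fact the same gap in disguise.

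\emph{The weights $\alpha_h$ are not controlled.} You acknowledge this at the end as ``the main obstacle'', but nothing in the proposal actually pins them down, and without doing so the argument cannot close. Lemma~\ref{Fund-LMC} gives a \emph{convex combination} with weights $\alpha_h \in [0,1]$ determined by the $\mu_0$-mass split of $[x,x+h)$ at the dyadic point $(x+h)_{l(x,x+h)}$. In the paper's proof the crucial step is precisely a quantitative bound on this split: using (\ref{FM1})--(\ref{MM1}), Lemma~\ref{H-exp-fast}\,(i), and the dichotomy $b_1(1+c_0)(1+c_1)\gtrless 1-b_1$, the paper shows the weight of the ``$Z_{1,l_{x+h}}(x+h)$'' block stays bounded away from $0$ along the chosen $h$ (this is (\ref{F-lower})). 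The condition $b_1(1+c_0)(1+c_1)\ge 1-b_1$ is exactly what forces $M_{m(1,j)}(0,x_{m(1,j)}+2^{-m(1,j)})/M_{m(1,j)}(0,x_{m(1,j)})$ not to vanish, and the other case is handled by a separate argument via Lemma~\ref{H-exp-fast}\,(ii). Your proposal does none of this; it only records that such control ``requires'' careful analysis. In the singular case there is no reason for $\alpha_h$ to converge, or to avoid $0$ and $1$: for a pure Bernoulli measure one can check that along the family $h^{(s)}_k = 2^{-(N_k+\lfloor s(M_k-N_k)\rfloor)}$ the weight $\alpha_{h^{(s)}_k}$ tends to $0$ for $s$ below a threshold and to $1$ above it, so the weight genuinely does collapse to an endpoint depending on $s$.

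\emph{The ``over-determined system forces a contradiction'' is not established, and is not automatic.} With two sequences you have two equations for the hypothetical common limit $L$ in terms of $\alpha_1,\alpha_2\in[0,1]$, $L_0 := \lim Z_{1,N_k}(x)/N_k$, and the gap ratio $\gamma$; adding $h''_k$ gives one more equation and one more unknown $\alpha_3$. This is not over-determined, and in the (L) case one can exhibit parameter choices for which two or three of these equations are mutually consistent (e.g.\ taking $L_0=0$, $\alpha_1=0$, $\alpha_2 = 1-a$ in your $h_k$/$h'_k$ pair). What \emph{would} rule this out is precisely knowledge of how $\alpha_h$ actually behaves as a function of the depth $s$ (that it collapses to $0$ or $1$ in a step-like manner), i.e.\ exactly the control you deferred. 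So the ``third interpolating sequence'' cannot rescue the proof without first resolving the weight problem.

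Two smaller points: your ``$Z_{1,M_k}(x)-Z_{1,N_k}(x)\asymp -\delta N_k$'' should be read only as ``$\le -c\,\delta N_k$''; the hypothesis gives you $\limsup > 1$, so $M_k-N_k$ need not be comparable to $N_k$ along your chosen subsequence and the two-sided ``$\asymp$'' is not justified without the boundedness you postulate. Also, when $Z_{1,N_k}(x)/N_k$ fails to converge along the chosen subsequence you should pass to a further subsequence (the boundedness needed for this is supplied by (\ref{M1-C})), and you should state that you are showing $\limsup \ne \liminf$ rather than merely exhibiting two subsequences.

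In short: the framework (Lemma~\ref{Fund-LMC} plus the $1$-block) is right and matches the paper; the missing estimate on the weights $\alpha_h$ is not a finishing touch but \emph{is} the proof, and the paper's Lemma~\ref{H-exp-fast} and the case split on $b_1(1+c_0)(1+c_1)$ versus $1-b_1$ are exactly what supply it.
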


\begin{proof}
Let $\delta > 0$ and $(n(j))_j$ be an increasing sequence satisfying \[ m_{n(j)}(1-x) \ge m_{n(j) - 1}(1-x) (1+\delta).\]     

Assume $b_1(1+c_0)(1+c_1) \ge 1-b_1$. 
Let \[ m(1,j) := m_{n(j)}(1-x) - 2 \text{ and } m(2,j) := m_{n(j)}(1-x).\]   
Then 
\begin{equation}\label{FM1} 
F(x+2^{-m(1,j)}) - F(x) \le M_{m(1,j)}(0, x_{m(1,j)}) + M_{m(1,j)}\left(0, x_{m(1,j)}+2^{-m(1,j)}\right), 
\end{equation}
\begin{equation}\label{FM2} 
F(x+2^{-m(1,j)}) - F\left((x+2^{-m(1,j)})_{m(1,j)}\right) \ge M_{m(1,j)}\left(0, x_{m(1,j)}+2^{-m(1,j)}\right) \text{ and } 
\end{equation}      
\begin{equation}\label{MM1} 
\frac{M_{m(1,j)}\left(0, x_{m(1,j)}+2^{-m(1,j)}\right)}{M_{m(1,j)}(0, x_{m(1,j)})} 
= \prod_{i=m_{n(j) - 1}(1-x)}^{m_{n(j)}(1-x) - 1} \frac{H_{i}\left(0, x_{m(1,j)}+2^{-m(1,j)}\right)}{H_{i}(0, x_{m(1,j)})}.  
\end{equation} 

By Lemma \ref{H-exp-fast-right} (i) and $m_{n(j)}(1-x) - m_{n(j) - 1}(1-x) \ge (1+\delta) j$,  
\begin{equation}\label{H-conv-1} 
\lim_{j \to \infty} \prod_{i= m_{n(j) - 1}(1-x)}^{m_{n(j)}(1-x) - 1} \frac{H_{i}\left(0, x_{m(1,j)}+2^{-m(1,j)}\right)}{b_1(c_0 + 1)} = 1.   
\end{equation}  
\begin{equation}\label{H-conv-2}
\lim_{j \to \infty} \prod_{i= m_{n(j) - 1}(1-x)}^{m_{n(j)}(1-x) - 1} \frac{H_{i}(0, x_{m(1,j)})(1+c_0)}{1-b_1} = 1.   
\end{equation}  

Using (\ref{FM1}), (\ref{FM2}), (\ref{MM1}), (\ref{H-conv-1}), (\ref{H-conv-2}) and  $b_1(1+c_0)(1+c_1) \ge 1-b_1$, 
\begin{equation}\label{F-lower}
\liminf_{j \to \infty} \frac{F(x+2^{-m(1,j)}) - F\left((x+2^{-m(1,j)})_{m(1,j)}\right)}{F(x+2^{-m(1,j)}) - F(x)} \ge \frac{1}{2}. 
\end{equation} 

Since \[ m(1,j) - l(x, x+2^{-m(1,j)}) = m_{n(j)}(1-x) - m_{n(j)-1}(1-x) - 2,\]   
\[ \liminf_{j \to \infty} \frac{m(1,j) - l(x, x+2^{-m(1,j)})}{m(1,j)} > 0. \] 

Using this, (\ref{F-lower}) and  (\ref{Fund-f-Lem-1}), 
\begin{equation}\label{m-1-j} 
\liminf_{j \to \infty} \frac{\Delta_1 F(x, x+2^{-m(1,j)}) - Z_{1, m(1,j)}(x)}{m(1,j)} > 0. 
\end{equation}

Recall $m(2,j) = l(x, x+2^{-m(2,j)})$ and (\ref{Fund-f-Lem-1}).  
Considering the cases $X_{m(2,j)}(x) = 0$ and $X_{m(2,j)}(x) = 1$ respectively, 
\begin{equation}\label{m-2-j} 
\limsup_{j \to \infty} \frac{\Delta_1 F(x, x + 2^{-m(2,j)})  - Z_{1, m(2,j)}(x)}{m(2,j)} \le 0. 
\end{equation}

By  (\ref{Fund-Y}), 
\begin{equation}\label{Z-small} 
\lim_{j \to \infty} \frac{Z_{1, m(1,j)}(x)}{m(1,j)} -  \frac{Z_{1, m(2,j)}(x)}{m(2,j)} = 0.  
\end{equation} 
Using (\ref{m-1-j}), (\ref{m-2-j}) and  (\ref{Z-small}), we have the assertion.

If $b_1(1+c_0)(1+c_1) < 1-b_1$, by Lemma \ref{H-exp-fast} (ii)    
there are $c^{\prime}, c^{\prime\prime}  > 0$ and $\delta_1, \delta_2 \in (0, \delta)$ such that 
for large $j$ 
\[ \Delta_1 F\left(x, x+2^{-(1+\delta_1) n(j-1)}\right) \ge Z_{1, n(j-1)}(x) + c^{\prime} \delta_1 n(j-1) \text{ and } \]
\[ \Delta_1 F\left(x, x+2^{-(1+\delta_2) n(j-1)}\right) \le Z_{1, n(j-1)}(x) - c^{\prime\prime}  \delta_2 n(j-1).  \]
For large $j$, 
\[ \frac{\Delta_1 F\left(x, x+2^{-(1+\delta_1) n(j-1)}\right)}{(1+\delta_1) n(j-1)} - \frac{\Delta_1 F(x, x+2^{-n(j-1)})}{n(j-1)}  
\ge \frac{c^{\prime}}{2} \delta_1 \, \text{ if $Z_{1, n(j-1)}(x) < 0$.} \]
\[ \frac{\Delta_1 F\left(x, x+2^{-(1+\delta_2) n(j-1)}\right)}{(1+\delta_2) n(j-1)} - \frac{\Delta_1 F(x, x+2^{-n(j-1)})}{n(j-1)}  
\le -\frac{c^{\prime\prime} }{2} \delta_2  \, \text{ if $Z_{1, n(j-1)}(x) > 0$. } \]
Thus we have the assertion. 
\end{proof}

Theorem \ref{E-AK-JMAA} follows from Propositions \ref{GMC-1} and \ref{Large-fluc}.    

Note \[ \Delta_1 F(x, x+h) =  \Delta_1 \widetilde F(1-x, 1-x-h) \text{ and } \widetilde Z_{1, n}(1-x) = Z_{1, n}(x),  \ \  x \notin D. \]  
We can consider $\displaystyle\lim_{h \to 0, h < 0} \dfrac{\Delta_1 F(x, x+h)}{\log_2 (1/|h|)}$ in the same manner. We have   
\begin{Cor}\label{bothside} 
Assume  $x \notin D$.   
Then  
\[ \lim_{h \to 0} \frac{\Delta_1 F(x, x+h)}{\log_2 (1/|h|)} \text{ exists } \] 
if and only if 
\[ \lim_{n \to \infty} \frac{m_{n+1}(1-x)}{m_{n}(1-x)} =  
\lim_{n \to \infty} \frac{m_{n+1}(x)}{m_{n}(x)} = 1 \text{ and }
\lim_{n \to \infty} \frac{Z_{1,n}(x)}{n} \text{ exists.} \] 
\end{Cor}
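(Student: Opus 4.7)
\medskip

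\noindent\textbf{Proof plan for Corollary \ref{bothside}.} The plan is to reduce the two-sided statement to two applications of Theorem \ref{E-AK-JMAA}: one directly (for the right limit), and one through the duality identity $\Delta_1 F(x, x+h) = \Delta_1 \widetilde F(1-x, 1-x-h)$ together with $\widetilde Z_{1,n}(1-x) = Z_{1,n}(x)$ (for the left limit). The two-sided limit exists if and only if both one-sided limits exist and coincide, so it suffices to characterise each side separately and check that the common value is forced.

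First I would handle $h \to 0^+$. Theorem \ref{E-AK-JMAA} applied at $x$ gives directly that
\[
\lim_{h \to 0, h > 0} \frac{\Delta_1 F(x, x+h)}{\log_2(1/|h|)}
\]
exists as a real number if and only if $\lim_n m_{n+1}(1-x)/m_n(1-x) = 1$ and $\lim_n Z_{1,n}(x)/n$ exists, and in that case the limit equals $\lim_n Z_{1,n}(x)/n$.

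Next I would handle $h \to 0^-$ by a dual change of variable. Writing $h = -h'$ with $h' > 0$, the identity in the excerpt gives
\[
\Delta_1 F(x, x+h) \;=\; \Delta_1 \widetilde F\bigl((1-x),\,(1-x)+h'\bigr),
\]
and since $|h| = h'$, the ratio $\Delta_1 F(x,x+h)/\log_2(1/|h|)$ is identical to the right-hand ratio for $\widetilde F$ at the point $y = 1-x$ (note $y \notin D$ since $x \notin D$). Applying Theorem \ref{E-AK-JMAA} to $\widetilde F$ at $y$ transforms the condition $\lim_n \widetilde m_{n+1}(1-y)/\widetilde m_n(1-y) = 1$ into $\lim_n m_{n+1}(x)/m_n(x) = 1$, and the existence of $\lim_n \widetilde Z_{1,n}(y)/n$ into the existence of $\lim_n Z_{1,n}(x)/n$ via $\widetilde Z_{1,n}(1-x) = Z_{1,n}(x)$. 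Moreover when the limit exists, it again equals $\lim_n Z_{1,n}(x)/n$.

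Combining: the two one-sided conditions share the hypothesis ``$\lim_n Z_{1,n}(x)/n$ exists'', and when both hold, the two one-sided limits automatically agree (both equal $\lim_n Z_{1,n}(x)/n$). Hence the two-sided limit exists iff both asymptotic regularity conditions $\lim_n m_{n+1}(1-x)/m_n(1-x) = \lim_n m_{n+1}(x)/m_n(x) = 1$ hold and $\lim_n Z_{1,n}(x)/n$ exists. There is essentially no obstacle here beyond verifying that the duality identity has been set up correctly (in particular that the ``$1-y$'' factor in Theorem \ref{E-AK-JMAA} really does produce $x$ and not $1-x$ after substitution, which is what makes the two $m_n$-conditions of the corollary genuinely independent rather than redundant).
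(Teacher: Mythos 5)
Your proof is correct and follows exactly the route the paper indicates just before stating the corollary: apply Theorem \ref{E-AK-JMAA} directly for $h\to 0^+$, and apply it to $\widetilde F$ at $1-x$ via the identities $\Delta_1 F(x,x+h)=\Delta_1\widetilde F(1-x,1-x-h)$ and $\widetilde Z_{1,n}(1-x)=Z_{1,n}(x)$ for $h\to 0^-$, observing that both one-sided limits equal $\lim_n Z_{1,n}(x)/n$ when they exist. The only cosmetic quibble is the notation $\widetilde m_n$: since $m_n(z)$ depends only on the binary digits of $z$ and not on the measure, there is no separate tilde version, and your substitution $1-y=x$ already gives the stated condition on $m_n(x)$.
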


\subsection{Modulus of continuity at $\mu_0$-a.s. points}

\begin{Thm}\label{LMC} 
There are two constants $0 < c \le C < +\infty$ such that the following hold for $\mu_0$-a.s. $x$ :   
\begin{equation}\label{sup} 
c \le \limsup_{h \to 0} \frac{\Delta_1 F(x, x+h)}{(\log_2 (1/|h|) \log \log \log_2 (1/|h|))^{1/2}} \le C. 
\end{equation} 
\begin{equation}\label{inf} 
-C \le \liminf_{h \to 0} \frac{ \Delta_1 F(x, x+h)}{(\log_2 (1/|h|) \log \log \log_2 (1/|h|))^{1/2}} \le -c. 
\end{equation} 
\end{Thm}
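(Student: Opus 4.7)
The form of (\ref{sup})--(\ref{inf}) is a law of the iterated logarithm (LIL), so the plan is to establish a martingale LIL for $Z_{1,n}(x) = \sum_{i=1}^{n} Y_i(x)$ under $\mu_0$ and then transfer it to $\Delta_1 F(x, x+h)$ via Lemma~\ref{Fund-LMC}. Conditional on $\mathcal{F}_n$, the next digit $X_{n+1}$ under $\mu_0$ is $0$ with probability $p_0(0, g_n(0,x))$ and $1$ with probability $p_1 = 1 - p_0$, so
\[
E^{\mu_0}\!\left[Y_{n+1} \mid \mathcal{F}_n \right] = \partial_t p_0\bigl(0, g_n(0,x)\bigr) + \partial_t p_1\bigl(0, g_n(0,x)\bigr) = 0,
\]
confirming that $\{Z_{1,n}\}$ is a $\mu_0$-martingale with increments bounded by (\ref{Fund-Y}). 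The conditional variance $\sigma_n^2(x) = (\partial_t p_0)^2/p_0 + (\partial_t p_1)^2/p_1$ at $(0, g_n(0,x))$ is bounded above by Lemma~\ref{Bdd} and bounded below uniformly by (ND), (\ref{Basic2}) and Lemma~\ref{ND-Lemma}. Stout's martingale LIL then produces constants $0 < c_0 \le C_0 < +\infty$, independent of $x$, such that $\mu_0$-a.s.
\[
c_0 \le \limsup_{n \to \infty} \frac{Z_{1,n}(x)}{\sqrt{n \log \log n}} \le C_0, \qquad -C_0 \le \liminf_{n \to \infty} \frac{Z_{1,n}(x)}{\sqrt{n \log \log n}} \le -c_0.
\]

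Next I would relate the indices in Lemma~\ref{Fund-LMC} to $\log_2(1/|h|)$. Since $p_{\max}(0) < 1$ by (\ref{Basic2}), a Borel--Cantelli argument shows that $\mu_0$-a.s.\ the length of any maximal run of equal digits in the binary expansion of $x$ ending at position $n$ is $O(\log n)$. Combined with Lemma~\ref{Kono-Lem} and $m_1(h) = \lfloor \log_2(1/h) \rfloor + O(1)$, this gives, for $\mu_0$-a.s.\ $x$ and all sufficiently small $h > 0$,
\[
m_1(h) - l(x, x+h),\ l_x - l(x, x+h),\ l_{x+h} - l(x, x+h) = O\!\left( \log \log_2(1/h) \right).
\]
Because $x$ and $x+h$ share the first $l(x, x+h)-1$ binary digits, $Z_{1, l(x,x+h)-1}(x+h) = Z_{1, l(x,x+h)-1}(x)$; combining this with (\ref{Fund-Y}) gives
\[
Z_{1, l_x}(x) = Z_{1, l_{x+h}}(x+h) + O\!\left(\log \log_2(1/h)\right) = Z_{1, l(x,x+h)-1}(x) + O\!\left(\log \log_2(1/h)\right).
\]
The coefficients $A, B$ in Lemma~\ref{Fund-LMC} satisfy $A, B \ge 0$ and $A + B = 1$ (by monotonicity of $F$ and $x \le (x+h)_{l(x,x+h)} \le x+h$), so substituting into Lemma~\ref{Fund-LMC} yields
\[
\Delta_1 F(x, x+h) = Z_{1, l(x, x+h)-1}(x) + O\!\left(\log \log_2(1/h)\right).
\]

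Combining the last display with the LIL of paragraph~1 and noting $\log \log_2(1/h) = o\!\left(\sqrt{\log_2(1/h) \log \log \log_2(1/h)}\right)$ immediately gives the upper bound in (\ref{sup}) and the lower bound in (\ref{inf}) for $h \to 0$, $h > 0$. For the direction $\limsup \ge c$ I would choose a subsequence $h_j = 2^{-k_j}$ along which $Z_{1, k_j}(x)/\sqrt{k_j \log \log k_j} \ge c_0 - \epsilon$; for this sequence $l(x, x+h_j) - 1 = k_j - O(\log k_j)$, and the last display shows the same lower bound is inherited by $\Delta_1 F(x, x+h_j)$. The direction $\liminf \le -c$ is symmetric. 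The $h < 0$ case follows from the duality $\Delta_1 F(x, x+h) = \Delta_1 \widetilde F(1-x, 1-x-h)$ (readily checked from $\widetilde F(y) = 1 - F(1-y)$) together with (\ref{Dual}), which sends $\mu_0$-a.s.\ $x$ to $\widetilde\mu_0$-a.s.\ $1-x$, reducing the problem to the $h > 0$ case for $\widetilde F$.

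The chief obstacle is securing a uniform positive lower bound $\inf_{n,x} \sigma_n^2(x) > 0$, which is what ensures the LIL constant $c_0$ does not depend on $x$. This is where (ND) is essential: Lemma~\ref{ND-Lemma} gives $\inf_{n,x} \partial_t P_n(0,x) > 0$, and the dual version obtained via (\ref{Dual}) supplies an analogous bound for $|\partial_t p_1|$, so both summands in $\sigma_n^2$ are uniformly positive. The run-length uniformity over all small $h$ (rather than along a subsequence) is the other technical point, handled by Borel--Cantelli using the geometric tails of run lengths that (\ref{Basic2}) provides.
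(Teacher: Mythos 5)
Your overall blueprint — Stout's LIL for the martingale $\{Z_{1,n}\}$ combined with Lemma~\ref{Fund-LMC} — is the paper's strategy, and your handling of the conditional variance, the uniformity of the LIL constants via (\ref{Fund-Y}), the dyadic subsequence $h_j=2^{-k_j}$ for the lower bound of (\ref{sup}), and the duality for $h<0$ are all sound. The gap is in the claim that
\[
l_{x+h}-l(x,x+h)=O\bigl(\log\log_2(1/h)\bigr)
\]
for $\mu_0$-a.s.\ $x$ and all small $h>0$. That quantity is the length of the run of $0$'s in the binary expansion of $x+h$, not of $x$, starting after the carry digit at position $l(x,x+h)$. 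A run-length Borel--Cantelli bound on the digits of $x$ says nothing about it. Concretely, write $\delta:=2^{-l}-(x-x_l)$ with $l=l(x,x+h)$; the constraint $l(x,x+h)=l$ forces $h\in(\delta,\delta+2^{-l})$, and one checks $l_{x+h}=\lceil -\log_2(h-\delta)\rceil$. Fixing $x$ and taking $h=\delta+2^{-N}$ with $N$ as large as you like (e.g.\ $N=l^2$) keeps $l(x,x+h)=l$, gives $m_1(h)\sim l_x$, yet makes $l_{x+h}-l(x,x+h)=N-l$ arbitrarily larger than $\log\log_2(1/h)\sim\log l$. Letting $l\to\infty$ makes $h\to0$, so this produces a genuine sequence of admissible $h$'s violating your bound.

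Because of this, the step $Z_{1,l_{x+h}}(x+h)=Z_{1,l(x,x+h)-1}(x)+O(\log\log_2(1/h))$ is false in general, and with it your display $\Delta_1 F(x,x+h)=Z_{1,l(x,x+h)-1}(x)+O(\log\log_2(1/h))$. Using only $A,B\ge0$, $A+B=1$ discards the crucial compensation: precisely when $l_{x+h}$ is large (i.e.\ $x+h$ lies just to the right of the dyadic rational $(x+h)_{l(x,x+h)}$), the weight $B=\bigl(F(x+h)-F((x+h)_{l(x,x+h)})\bigr)/\bigl(F(x+h)-F(x)\bigr)$ is small, and it is the product $B\cdot O(l_{x+h}-l(x,x+h))$ that must be controlled, not $l_{x+h}-l(x,x+h)$ alone. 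The paper instead bounds the index differences at the weaker LIL scale $O\bigl((l(x,x+h)\log\log l(x,x+h))^{1/2}\bigr)$ via Stout applied to $\sum_i\bigl(X_i-E^{\mu_0}[X_i\mid\mathcal F_{i-1}]\bigr)$, which is exactly commensurate with the main term; your proposal, in trying to prove the sharper $O(\log\log)$ estimate, breaks precisely on the $l_{x+h}$ term. To repair the argument you would need either the paper's route or an explicit estimate showing $B$ decays geometrically in $l_{x+h}-l(x,x+h)$ (via $M_{l_{x+h}-1}(0,(x+h)_{l_{x+h}-1})$ and Lemma~\ref{H-exp-fast}), so that $B\cdot(l_{x+h}-l(x,x+h))$ stays below the LIL scale.

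Everything else — the martingale verification $E^{\mu_0}[Y_{n+1}\mid\mathcal F_n]=0$, the use of (ND)/Lemma~\ref{ND-Lemma} to keep the increments bounded away from zero, the choice of dyadic $h_j$ for the lower bound (a clean alternative to the paper's sandwich (\ref{minmax}) and (\ref{LIL-Bdd})) — is correct.
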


By this we can improve (\ref{MTNI-k})  for $k=1$ as follows\footnote{\text{  If } \begin{equation*}\label{strong}
\limsup_{n \to \infty} Z_{k, n}^{+}(x) > 0 \text{ and } \limsup_{n \to \infty} Z_{k, n}^{-}(x) > 0 \ \text{ $\mu_0$-a.s.$x$,} \tag{**}
\end{equation*} 
\[ \limsup_{h \to 0} \frac{f_k(x+h) - f_k(x)}{|h|^c}  = +\infty = - \liminf_{h \to 0} \frac{f_k(x+h) - f_k(x)}{|h|^c} \text{ $\mu_0$-a.s.$x$.}\]
If we can apply Lemma \ref{LIL} to $\{Z_{k, n}\}_{n}$ for $k \ge 2$,  (\ref{strong}) follows immediately and moreover Theorem \ref{LMC} holds for $k \ge 2$. } :    
\[ \limsup_{h \to 0} \frac{f_1(x+h) - f_1(x)}{|h|^c} = +\infty = -\liminf_{h \to 0} \frac{f_1(x+h) - f_1(x)}{|h|^c}  \text{  $\mu_0$-a.s.$x$.} \]
As we will see in Corollaries \ref{LMC-lower-AC} and \ref{Ori-T}, 
for some special choices of $(b_1(t), c_0(t), c_1(t))$, 
we can improve (\ref{sup}) and (\ref{inf}).    

The key tools of the lower bound for (\ref{sup}) and the upper bound for (\ref{inf}) are  (\ref{minmax}) and Stout's law of the iterated logarithm (LIL) below.  
Recall (\ref{ZY}). 
Apply Stout's law of the iterated logarithm (LIL) below to $\{Y_i\}_i$.  
The key tool of the upper bound for (\ref{sup}) and the lower bound for (\ref{inf}) 
is Lemma \ref{Fund-LMC} above, which states $\Delta_1 F(x, x+h)$ is between $Z_{1, l_{x}}(x)$ and $Z_{1, l_{x+h}}(x+h)$, roughly.  
Estimate parts consisting of the differentials by estimating $Z_{1, l_{x}}(x) - Z_{1, l(x, x+h)}(x)$ and $Z_{1, l_{x+h}}(x+h) - Z_{1, l(x,x+h)}(x)$. 
Now apply Stout's LIL to $Z_{1, l(x,x+h)}(x)$ and these differences.   

\begin{Lem}[Stout's LIL for martingales \cite{St}]\label{LIL} 
Let $(\Omega, \mathcal{F}, P)$ be a probability space and $\{S_n, \mathcal{F}_n\}_{n \ge 0}$ be a martingale on it.  
Let $I_n := \sum_{i=1}^{n} E[(S_i - S_{i-1})^2| \mathcal{F}_{i-1}]$ where we denote the expectation with respect to $P$ by $E$. 
Assume  there are constants $0 < c \le C < +\infty$ such that $c \le |S_i - S_{i-1}| \le C$  $\mu_0$-a.s. for any $i \ge 1$. 
Then 
\begin{equation*}
\limsup_{n \to \infty} \frac{S_{n}}{(I_n \log\log I_n )^{1/2}} = \sqrt{2} = -\liminf_{n \to \infty} \frac{S_n}{(I_n \log\log I_n )^{1/2}} \ \text{ $P$-a.s.}      
\end{equation*}
\end{Lem}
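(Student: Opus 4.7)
The plan is to prove only the $\limsup$ half, since the $\liminf$ statement follows by applying the result to the martingale $\{-S_n\}$. Note first that $I_n \ge c^2 n \to \infty$ under the hypothesis $|S_i - S_{i-1}| \ge c$, so $\log\log I_n$ is eventually well-defined.

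For the upper bound $\limsup S_n / (I_n \log\log I_n)^{1/2} \le \sqrt{2}$, I would build the exponential process
\[ M_n^{(\theta)} := \exp\!\Bigl(\theta S_n - \tfrac{\theta^2}{2}(1 + \epsilon(\theta)) I_n\Bigr), \]
where $\epsilon(\theta) \to 0$ as $\theta \to 0$; the bound $|S_i - S_{i-1}| \le C$ together with the elementary inequality $e^u \le 1 + u + \tfrac{u^2}{2} e^{|u|}$ applied conditionally on $\mathcal{F}_{i-1}$ shows that $\{M_n^{(\theta)}\}_n$ is a supermartingale for $\theta \in (0, 1/C]$. Then fix $\rho > 1$ close to $1$, set $\tau_k := \inf\{n : I_n > \rho^k\}$ and $\theta_k := \sqrt{2 \rho^{-k} \log\log \rho^k}$, and apply Doob's submartingale inequality to $M_{n \wedge \tau_k}^{(\theta_k)}$ to obtain
\[ P\!\Bigl(\max_{n \le \tau_k} S_n \ge (1+\delta)\sqrt{2 \rho^k \log\log \rho^k}\Bigr) \le (k \log \rho)^{-(1+\delta')} \]
for some $\delta' > 0$ when $\delta$ is fixed and $\rho$ is close to $1$. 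This is summable in $k$, and the first Borel-Cantelli lemma followed by letting $\rho \downarrow 1$ and $\delta \downarrow 0$ yields the upper bound.

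For the lower bound $\limsup S_n / (I_n \log\log I_n)^{1/2} \ge \sqrt{2}$, which is the harder half, the plan is a blocking argument. Introduce $n_k := \inf\{n : I_n > \rho^k\}$ for a large $\rho > 1$, write $D_k := S_{n_k} - S_{n_{k-1}}$ and $\Delta_k := I_{n_k} - I_{n_{k-1}} \asymp \rho^k$, and target a reverse exponential bound of the form
\[ P\!\Bigl(D_k \ge (1-\delta)\sqrt{2 \Delta_k \log\log \Delta_k}\ \Big|\ \mathcal{F}_{n_{k-1}}\Bigr) \ge c' (\log \Delta_k)^{-(1-\delta')}, \]
whose sum over $k$ diverges. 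The conditional Borel-Cantelli lemma for adapted events (L\'evy's extension) then forces the block event to occur for infinitely many $k$ almost surely. Combining this with the upper bound applied to $|S_{n_{k-1}}|$ (which grows strictly more slowly by the previous step) and letting $\rho \to \infty$, $\delta \to 0$ produces $\limsup \ge \sqrt{2}$.

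The main obstacle is the reverse exponential inequality in the lower bound step. I would obtain it by truncating the increments $S_i - S_{i-1}$ at a level $\eta_k \sqrt{\Delta_k}$ with $\eta_k \to 0$ (using $|S_i - S_{i-1}| \le C$ to dominate the truncation error), and then establishing a sharpened Cram\'er-type lower bound on conditional moment generating functions of the form $E[e^{\theta X} \mid \mathcal{F}] \ge \exp(\theta^2 \sigma^2 / 2 - O(\theta^3))$ for bounded martingale differences with conditional variance $\sigma^2$. Here the lower bound $|S_i - S_{i-1}| \ge c$ is critical: it forces $\Delta_k \ge c^2 (n_k - n_{k-1})$, so each block contains at least $\asymp \rho^k / c^2$ increments, which is what makes a Gaussian-type lower tail approximation of $D_k / \sqrt{\Delta_k}$ available. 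Optimizing in $\theta$ against $\lambda = (1-\delta)\sqrt{2 \log\log \Delta_k / \Delta_k}$ and exponentiating then delivers the claim, completing the argument.
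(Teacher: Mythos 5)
This lemma is not proved in the paper: it is quoted verbatim from Stout's 1970 paper \cite{St}, so there is no ``paper's own proof'' to compare against. Your sketch does reproduce the standard strategy behind Stout's theorem (exponential supermartingale plus Doob's maximal inequality over geometrically spaced stopping times for the upper bound; blocking, a reverse exponential estimate, and the conditional Borel--Cantelli lemma for the lower bound), and the two-sided bound $c \le |S_i - S_{i-1}| \le C$ does make these steps go through because it pins $I_n \asymp n$ and makes the increments uniformly subgaussian.

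There is, however, a real gap in your lower-bound step, precisely at the point you yourself flag as ``the main obstacle.'' A lower bound on the conditional moment generating function, $E[e^{\theta X}\mid\mathcal{F}] \ge \exp(\theta^2\sigma^2/2 - O(\theta^3))$, does \emph{not} by itself imply a lower bound on the tail $P(X > a \mid \mathcal{F})$. Unlike the Chernoff direction, you cannot simply ``optimize in $\theta$ and exponentiate'': a large MGF value is compatible with the mass sitting below $a$. The correct inversion (as in Stout's argument, or in the classical Kolmogorov LIL lower bound) requires combining the lower MGF bound with an \emph{upper} bound at a slightly larger exponent, say writing $E[e^{\theta D_k}\mid\mathcal{F}] = E[e^{\theta D_k}; D_k \le a] + E[e^{\theta D_k}; D_k > a]$, bounding the first term by $e^{\theta a}$, and controlling the second via H\"older against $E[e^{(1+\eta)\theta D_k}\mid\mathcal{F}]$ to extract a lower bound on $P(D_k > a \mid \mathcal{F})$; equivalently one can argue by exponential change of measure. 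Without this double-sided control, your Cram\'er-type step does not deliver the claimed reverse inequality, so the blocking argument has no base to stand on. A second, minor slip: you invoke $\Delta_k \ge c^2(n_k - n_{k-1})$ to conclude each block has \emph{at least} $\asymp \rho^k/c^2$ increments, but that inequality gives an \emph{upper} bound on $n_k - n_{k-1}$; the lower bound on the number of increments in a block, which is what the Gaussian approximation actually needs, comes from the other side, $\Delta_k \le C^2(n_k - n_{k-1})$.
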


\begin{proof}[Proof of Theorem \ref{LMC}]
First we show the lower bound for (\ref{sup}) and the upper bound for (\ref{inf}).    
Recall (\ref{minmax}).   
Applying Lemma \ref{LIL} to $\{Y_i\}_i$, 
there is $c > 0$ such that the following hold $\mu_0$-a.s.$x$ : 
\begin{equation}\label{LIL-sup}   
\limsup_{n \to +\infty} \frac{\max\left\{\Delta_1 F(x, x_n), \Delta_1 F(x, x_n + 2^{-n})\right\}}{(n \log\log n)^{1/2}} \ge c. 
\end{equation}  
\begin{equation}\label{LIL-inf} 
\liminf_{n \to +\infty} \frac{\min\left\{\Delta_1 F(x, x_n), \Delta_1 F(x, x_n + 2^{-n})\right\}}{(n \log\log n)^{1/2}} \le -c.   
\end{equation}   

By \cite[Lemma 3.2]{O1}   
there are constants $0 < c^{\prime}  \le c^{\prime\prime} < 1$ such that 
\[ c^{\prime} \le \liminf_{n \to +\infty} \frac{\left|\{i \in \{1, \dots, n\}: X_i(x) = 0 \}\right|}{n} \le \limsup_{n \to +\infty} \frac{\left|\{i \in \{1, \dots, n\}: X_i(x) = 0 \}\right|}{n}  \le c^{\prime\prime} \]
holds $\mu_0$-a.s. $x$.  

Let $\sigma(h) := \log_2 \left(1/|h|\right)$ for $h \ne 0$.  
Using this and (\ref{Fund-Y}), 
there is  $C < +\infty$ such that  
\begin{equation}\label{LIL-Bdd}  
\limsup_{n \to \infty} \frac{\sigma(x-x_n) + \sigma(x_n + 2^{-n} - x)}{\sigma(2^{-n})} \le C   \text{ $\mu_0$-a.s.$x$.}    
\end{equation} 

(\ref{LIL-sup}) and (\ref{LIL-Bdd}) imply the lower bound for (\ref{sup}). 
(\ref{LIL-inf}) and (\ref{LIL-Bdd}) imply the upper bound for (\ref{inf}).   

Second we show the upper bound for (\ref{sup}) and the lower bound for (\ref{inf}). 
Assume $h > 0$. 
Applying Lemma \ref{LIL} to $\left\{\sum_{i=1}^{n} X_i - E^{\mu_0}[X_i | \mathcal{F}_{i-1}]\right\}_n$ and $\{Y_i\}_i$,     
\[ l_{x} - l(x, x+h) + l_{x+h} - l(x, x+h) = O\left(( l(x, x+h) \log\log  l(x, x+h))^{1/2}\right) \text{ and }  \]
\[ Z_{1, l(x,x+h)}(x) = O\left((l(x,x+h) \log\log l(x,x+h))^{1/2} \right),   h \to 0, h > 0,  \text{ $\mu_0$-a.s.$x$.} \] 
Using (\ref{Fund-f-Lem-1})       
\begin{equation}\label{LMC-right-2} 
\Delta_1 F(x, x+h) = O\left((\sigma(h) \log\log \sigma(h))^{1/2}\right), \, h \to 0, h > 0,  \text{ $\mu_0$-a.s.$x$.}   
\end{equation} 

Now assume  $h < 0$.       
Applying Lemma \ref{LIL} 
to $\{\widetilde Y_i\}_i$,      
\[ \Delta_1 \widetilde F(y, y+h) = O\left((\sigma(h) \log\log \sigma(h))^{1/2}\right), \, h \to 0, h > 0,  \text{  $\widetilde\mu_0$-a.s.$y$.}   \]
Let $T(y) := 1-y$. 
By (\ref{Dual}), $\widetilde \mu_0 = \mu_0 \circ T^{-1} \text{ and }$
\begin{equation}\label{Lem2-3} 
  \Delta_k \widetilde F(x, y) = \Delta_k F(1-x, 1-y),  \ \ x, y \in (0,1), \, k \ge 1.  
\end{equation}

Therefore 
\begin{equation}\label{LMC-left-2}
\Delta_1 F(x, x+h) 
=  \Delta_1 \widetilde F(1-x, 1-x-h) = O\left((\sigma(h) \log\log \sigma(h))^{1/2}\right), \, h \to 0, h < 0,  \text{ $\mu_0$-a.s.$x$.}  
\end{equation}  
(\ref{LMC-right-2}) and (\ref{LMC-left-2}) complete the proof of the upper bound for (\ref{sup}) and the lower bound for (\ref{inf}).       
\end{proof}

Let \[ I_{n}(x) := \sum_{i=1}^{n} E\left[Y_i^2 | \mathcal{F}_{i-1}\right](x) \text{ and } \sigma(h,x) := I_{\lfloor \log_2 (1/|h|)\rfloor}(x).\]  

\begin{Cor}\label{LMC-lower-AC}  
If $\mu_0$ is absolutely continuous,   
\begin{equation}\label{LMC-AC-inf} 
\liminf_{h \to 0} \frac{\Delta_1 F(x,x+h)}{\left(\sigma(h, x) \log\log \sigma(h,x)\right)^{1/2}} 
= -\sqrt{2} \ \ \  \text{ $\mu_0$-a.s. $x$.}  
\end{equation}  
\end{Cor}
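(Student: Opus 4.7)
The plan is to apply Stout's LIL (Lemma~\ref{LIL}) directly to the $\mu_0$-martingale $Z_{1,n}=\sum_{i=1}^n Y_i$ with its intrinsic conditional variance $I_n$, and then transfer the resulting sharp constant $-\sqrt{2}$ to $\Delta_1 F(x,x+h)$ via Lemma~\ref{Fund-LMC}. By (\ref{Fund-Y}), $|Y_i(x)|$ is bounded above and below by positive constants uniformly in $i$ and $x$, so Stout's LIL applies to $\{Z_{1,n},\mathcal{F}_n\}$ under $\mu_0$ and gives $\mu_0$-a.s.
\[
\liminf_{n\to\infty}\frac{Z_{1,n}(x)}{\sqrt{I_n(x)\log\log I_n(x)}}=-\sqrt{2}.
\]
Boundedness of $Y_i$ further implies $I_n(x)\asymp n$, so the LIL denominator is stable under $O(\log n)$ shifts of the index.

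For the estimate $\liminf_{h\to 0}\Delta_1 F(x,x+h)/\sqrt{\sigma(h,x)\log\log\sigma(h,x)}\le-\sqrt{2}$, I would take a subsequence $(n_k)$ along which the above liminf is attained. By (\ref{minmax}), for each $k$ at least one of $\Delta_1 F(x,x_{n_k})$, $\Delta_1 F(x,x_{n_k}+2^{-n_k})$ is $\le Z_{1,n_k}(x)$; let $h_k\in\{x_{n_k}-x,\;x_{n_k}+2^{-n_k}-x\}$ be the corresponding choice. A Borel--Cantelli argument on the lengths of consecutive-$0$ and consecutive-$1$ runs in the binary expansion of $x$ (using $p_{\min}(0)>0$) shows that for $\mu_0$-a.s.\ $x$ one may arrange $|h_k|\ge 2^{-n_k-O(\log n_k)}$ for all but finitely many $k$. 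Then $\log_2(1/|h_k|)=n_k+O(\log n_k)$ and $\sigma(h_k,x)=I_{n_k}(x)(1+o(1))$, from which the upper bound follows along the sequence $(h_k)$.

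For the matching $\liminf\ge-\sqrt{2}$, I would use Lemma~\ref{Fund-LMC} to write $\Delta_1 F(x,x+h)$ (case $h>0$) as a convex combination of $Z_{1,l_x}(x)$ and $Z_{1,l_{x+h}}(x+h)$ plus $O(1)$. Since $x$ and $x+h$ agree through position $l(x,x+h)-1$, $Z_{1,l(x,x+h)}(x+h)=Z_{1,l(x,x+h)}(x)$; the same Borel--Cantelli bound gives $l_x-l(x,x+h),\,l_{x+h}-l(x,x+h)=O(\log l(x,x+h))$ $\mu_0$-a.s., and (\ref{Fund-Y}) then yields
\[
\Delta_1 F(x,x+h)=Z_{1,l(x,x+h)}(x)+O\bigl(\log l(x,x+h)\bigr)
\]
uniformly in small $h>0$. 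Since $\sigma(h,x)=I_{l(x,x+h)}(x)(1+o(1))$ by the same scale comparison, Stout's LIL at $x$ along $n=l(x,x+h)$ delivers the lower bound. The case $h<0$ then follows by symmetry from (\ref{Lem2-3}) applied to $\widetilde F$ at $1-x$ under $\widetilde\mu_0$. The principal obstacle is the simultaneous alignment of the combinatorial index $l(x,x+h)$, the Euclidean index $\log_2(1/|h|)$ and the variance index in $\sigma(h,x)$ at Stout-precision; it is in this alignment, as well as in keeping the convex-combination weights of Lemma~\ref{Fund-LMC} from collapsing onto a single term, that the absolute continuity of $\mu_0$ is used.
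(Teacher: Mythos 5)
Your overall structure (Stout's LIL on $Z_{1,n}$, then the upper bound via (\ref{minmax}) along a subsequence, then the lower bound via Lemma \ref{Fund-LMC} plus dualization) matches the paper's. The upper-bound half is fine. But the lower-bound half has a genuine gap.

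You claim that ``the same Borel--Cantelli bound gives $l_x - l(x,x+h),\ l_{x+h}-l(x,x+h) = O(\log l(x,x+h))$ $\mu_0$-a.s.'' and then deduce the \emph{two-sided} estimate $\Delta_1 F(x,x+h) = Z_{1,l(x,x+h)}(x) + O(\log l(x,x+h))$ ``uniformly in small $h>0$.'' The Borel--Cantelli control is legitimate for $l_x - l(x,x+h)$, since that is the length of a $1$-run in the binary expansion of the fixed point $x$. It fails for $l_{x+h}-l(x,x+h)$: that quantity measures a $0$-run in the expansion of $x+h$, and for any fixed non-dyadic $x$ and any $M$ one can choose $h>0$ arbitrarily small with $l(x,x+h)=l$ (any $0$-position of $x$) and $x+h$ within $2^{-(l+M)}$ of the dyadic point $(x+h)_l = x_{l-1}+2^{-l}$, forcing $l_{x+h}-l(x,x+h)\ge M$. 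So there is no $\mu_0$-a.s.\ uniform-in-$h$ bound on $l_{x+h}-l(x,x+h)$, and your two-sided estimate is false as stated.

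What the paper does instead, and what you should do, is a \emph{one-sided} sign argument that makes the size of $l_{x+h}-l(x,x+h)$ irrelevant for the lower bound. By Lemma \ref{Kono-Lem}(v) and the definition of $l_{x+h}$, $X_i(x+h)=0$ for $l(x,x+h)<i<l_{x+h}$, hence $Y_i(x+h)>0$ there; only the single boundary term $Y_{l_{x+h}}(x+h)$ is negative and it is bounded by (\ref{Fund-Y}). Therefore $Z_{1,l_{x+h}}(x+h)\ge Z_{1,l(x,x+h)}(x+h)+O(1) = Z_{1,l(x,x+h)}(x)+O(1)$ for \emph{every} small $h>0$, no matter how large $l_{x+h}-l(x,x+h)$ is. Combined with $Z_{1,l_x}(x)\ge Z_{1,l(x,x+h)}(x)-O(\log l(x,x+h))$ (the part Borel--Cantelli \emph{does} give you, since it concerns $x$ alone) and the convexity in Lemma \ref{Fund-LMC}, one obtains $\Delta_1 F(x,x+h)\ge \min\{Z_{1,l_x}(x),Z_{1,l(x,x+h)}(x)\}+O(1)$, which is exactly the inequality the paper uses. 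Your remark that absolute continuity is needed ``to keep the convex-combination weights from collapsing onto a single term'' is a red herring for the same reason: collapse onto the $Z_{1,l_{x+h}}(x+h)$ term is harmless precisely because of this one-sided positivity.
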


\begin{proof}
Assume  $x \notin D$ and $h > 0$.
Using Lemma \ref{Fund-LMC} and that \[ Z_{1, l_{x+h}}(x+h) \ge Z_{1, l(x, x+h)}(x+h) =  Z_{1, l(x, x+h)}(x) + O(1),\]     
\[ \Delta_1 F(x,x+h) \ge \min\left\{ Z_{1, l_x}(x),  Z_{1, l(x,x+h)}(x) \right\} + O(1). \]

Since $\displaystyle\lim_{h \to 0, h > 0} \dfrac{l_{x}}{l(x,x+h)} = 1$,   
\[ \lim_{h \to 0, h > 0} \frac{I_{l(x,x+h)}(x)}{\sigma(h, x)} =  \lim_{h \to 0, h > 0} \frac{I_{l_{x}}(x)}{\sigma(h, x)}  = 1  \ \ \  \text{ $\mu_0$-a.s.$x$. }  \]
Therefore 
\[ \liminf_{h \to 0, h > 0} \frac{\Delta_1 F(x,x+h)}{\left(\sigma(h, x) \log\log \sigma(h,x)\right)^{1/2}} 
\ge -\sqrt{2} \ \ \ \text{ $\mu_0$-a.s.$x$.}  \]  
By (\ref{Lem2-3}) 
\[ \liminf_{h \to 0, h < 0} \frac{\Delta_1 F(x,x+h)}{\left(\sigma(h, x) \log\log \sigma(h,x)\right)^{1/2}} 
\ge -\sqrt{2} \ \ \ \text{ $\mu_0$-a.s.$x$}  \]  
in the same manner.  
Thus we have  
\begin{equation*}
\liminf_{h \to 0} \frac{\Delta_1 F(x,x+h)}{\left(\sigma(h, x) \log\log \sigma(h,x)\right)^{1/2}} 
\ge -\sqrt{2} \ \ \ \text{ $\mu_0$-a.s.$x$.}  
\end{equation*}  

If $\mu_0$ is absolutely continuous, 
\[ \lim_{n \to \infty} \frac{\sigma(x-x_n, x)}{I_n(x)} = \lim_{n \to \infty} \frac{\sigma(x_n + 2^{-n} - x, x)}{I_{n}(x)} = 1 \ \ \  \text{ $\mu_0$-a.s.$x$.} \]  
Using this, Lemma \ref{LIL} and (\ref{minmax}),  
we have the upper bound of (\ref{LMC-AC-inf}).  
\end{proof}

If $b_1(t) = t+ \dfrac{1}{2}$ and $c_0(t) = c_1(t) = 0$, by symmetry, 
\[ \partial_t F(0, x) = \partial_t F(0, 1-x), F(x) = 1- F(1-x) \text{ and }  \sigma(h, x) = \lfloor \log_2 (1/|h|) \rfloor.\]        
Hence 
\begin{Cor}[The original Takagi function case of \text{\cite[Theorem 5]{Ko}}]\label{Ori-T}
If $b_1(t) = t+ \dfrac{1}{2}$ and 
$c_0(t) = c_1(t) = 0$, the following hold $\mu_0$-a.s. $x$ : 
\[ \limsup_{h \to 0} \frac{\Delta_1 F(x,x+h)}{\left(\log_2 (1/|h|) \log\log \log_2 (1/|h|) \right)^{1/2}} 
= \sqrt{2} = - \liminf_{h \to 0} \frac{\Delta_1 F(x,x+h)}{\left(\log_2 (1/|h|) \log\log \log_2 (1/|h|) \right)^{1/2}}. \]
\end{Cor}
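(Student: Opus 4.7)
The plan is to derive Corollary \ref{Ori-T} by combining Corollary \ref{LMC-lower-AC} (which pins down the $\liminf$ to the sharp constant $-\sqrt{2}$) with the reflection symmetry $x\mapsto 1-x$ (which converts the $\liminf$ at $x$ into a $\limsup$ at $1-x$). Under the hypothesis $(b_1(t),c_0(t),c_1(t))=(t+1/2,0,0)$, the measure $\mu_0$ coincides with $L_{1/2}$, i.e., the uniform Lebesgue measure; in particular $F(x)=x$, $\mu_0$ is absolutely continuous, and $\mu_0$ is invariant under $y\mapsto 1-y$. The symmetries $f_1(x)=f_1(1-x)$ and $\sigma(h,x)=\lfloor\log_2(1/|h|)\rfloor$ noted in the text are then at our disposal.

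First I would confirm $\sigma(h,x)=\lfloor\log_2(1/|h|)\rfloor$ (up to an asymptotically negligible multiplicative constant). In the (L) case at $a=1/2$, the formula for $Y_i$ gives $|Y_i(x)|$ constant in both $i$ and $x$, so $E[Y_i^2\mid\mathcal{F}_{i-1}](x)$ is constant and $I_n(x)$ is linear in $n$. Because $\log\log$ absorbs multiplicative constants, this yields the claimed identity inside the relevant denominator. Applying Corollary \ref{LMC-lower-AC} (valid since $\mu_0$ is absolutely continuous) then produces
\[
\liminf_{h\to 0}\frac{\Delta_1 F(x,x+h)}{(\log_2(1/|h|)\,\log\log\log_2(1/|h|))^{1/2}}=-\sqrt{2}\qquad\text{$\mu_0$-a.s.\ $x$,}
\]
which is the $\liminf$ half of Corollary \ref{Ori-T}.

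For the $\limsup$, I would use the symmetry. Since $f_1(y)=f_1(1-y)$ and $F(x)=x$,
\[
\Delta_1 F(x,x+h)=\frac{f_1(x+h)-f_1(x)}{h}=\frac{f_1(1-x-h)-f_1(1-x)}{h}=-\Delta_1 F(1-x,(1-x)+(-h)).
\]
Taking $\limsup$ as $h\to 0$, substituting $h'=-h$, and using $|h|=|h'|$, the left-hand side becomes
\[
-\liminf_{h'\to 0}\frac{\Delta_1 F(1-x,(1-x)+h')}{(\log_2(1/|h'|)\,\log\log\log_2(1/|h'|))^{1/2}}.
\]
Because $\mu_0$ is invariant under reflection, the previous step applies at $1-x$ as well for $\mu_0$-a.s.\ $x$, so this $\liminf$ equals $-\sqrt{2}$ and the $\limsup$ equals $+\sqrt{2}$. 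The main subtlety is the constant-chase in the verification of $\sigma(h,x)$; beyond that, the proof is a clean consequence of Corollary \ref{LMC-lower-AC} together with the reflection $x\mapsto 1-x$.
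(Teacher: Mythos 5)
Your overall strategy matches the paper's: it derives the $\liminf$ half from Corollary~\ref{LMC-lower-AC}, gets the $\limsup$ half by the reflection $x\mapsto 1-x$ (using $f_1(x)=f_1(1-x)$, $F(x)=1-F(1-x)$ and the reflection invariance of $\mu_0=$ Lebesgue), and plugs in the simplification of $\sigma(h,x)$. The reflection step is correct and is exactly what the ``Hence'' in the paper implicitly invokes.

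There is, however, a genuine gap in the step where you ``confirm'' $\sigma(h,x)=\lfloor\log_2(1/|h|)\rfloor$. You argue that $|Y_i|$ is constant, hence $I_n(x)$ is linear in $n$, and that the resulting multiplicative constant can be ignored ``because $\log\log$ absorbs multiplicative constants.'' That reasoning only handles the $\log\log\sigma(h,x)$ factor. The constant also sits inside $\sigma(h,x)^{1/2}$, which does \emph{not} absorb it: if $I_n(x)=c\,n$ then $\bigl(\sigma(h,x)\log\log\sigma(h,x)\bigr)^{1/2}\sim \sqrt{c}\,\bigl(\lfloor\log_2(1/|h|)\rfloor\,\log\log\log_2(1/|h|)\bigr)^{1/2}$, so the final constant in the corollary is rescaled by $\sqrt{c}$. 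You therefore need to compute $c=E[Y_i^2\mid\mathcal{F}_{i-1}]$ exactly and check that it equals $1$, not merely that it is constant. But from Definition~\ref{Def-many} in the (L) case with $b_1(t)=t+\tfrac12$: $H_{i-1}(0,x)=\tfrac12$ and $\partial_t H_{i-1}(0,x)=\pm b_1'(0)=\pm 1$, so $Y_i(x)=\pm 2$, $Y_i^2\equiv 4$, and hence $I_n(x)=4n$. Your argument as written would then give $-2\sqrt{2}$ rather than $-\sqrt{2}$ for the $\liminf$. So either the constant in the numerical identification $\sigma(h,x)=\lfloor\log_2(1/|h|)\rfloor$ needs to be chased down carefully and reconciled with the normalization of $T$ implied by (\ref{HY-fund}), or the $\sqrt{2}$ in the statement must be revisited. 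As it stands, the ``absorption'' claim does not justify the step.
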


\section*{Acknowledgement}   
The author would like to express his thanks to the referee for careful reading of the manuscript and many valuable suggestions. 
He also would like to express his thanks to E. de Amo for comments on an earlier version of this paper. 
This work was partly supported by Grant-in-Aid for JSPS fellows (24.8491).



\begin{thebibliography}{99}
\bibitem{AK1} P. C. Allaart and K. Kawamura, 
Extreme values of some continuous, nowhere differentiable functions, 
{\it Math. Proc. Camb. Phil. Soc.} 140 (2006), no. 2, 269-295.
\bibitem{AK-JMAA} P. C. Allaart and K. Kawamura, 
The improper infinite derivatives of Takagi's nowhere-differentiable function, 
{\it J. Math. Anal. Appl.} 372 (2010), no. 2, 656-665.
\bibitem{dA1} E. de Amo, M. D\'iaz Carrillo and  J. Fern\'andez-S\'anchez, 
Singular functions with applications to fractal dimensions and generalized Takagi functions, 
{\it Acta Appl. Math.} 119 (2012), 129-148.
\bibitem{B} P. Billingsley, 
Van der Waerden's continuous nowhere differentiable function, 
{\it Amer. Math. Monthly} 89 (1982), 691.
\bibitem{BDL} J. B. Brown, U. B. Darji and  E. P. Larsen,   
Nowhere monotone functions and functions of nonmonotonic type,  
{\it Proc. Am. Math. Soc.} 127(1),  (1999) 173-182. 
\bibitem{HY} M. Hata and M. Yamaguti, 
The Takagi Function and Its Generalization, 
{\it Japan J. Appl. Math.} 1 (1984) 183-199. 
\bibitem{Ko} N. K\^ono, 
On generalized Takagi functions, 
{\it Acta Math. Hung.} 49 (1987), 315-324.
\bibitem{Kr} M. Kr\"uppel, 
On the extrema and the improper derivatives of Takagi's continuous nowhere differentiable function, 
{\it Rostock. Math. Kolloq.} 62 (2007), 41-59. 
\bibitem{MW} R. Mauldin and S. Williams, 
On the Hausdorff dimension of some graphs, 
{\it Trans. Amer. Math. Soc.} 298 (1986), 793-803.
\bibitem{O1} K. Okamura, 
Singularity results for functional equations driven by linear fractional transformations, 
{\it J. Theoret. Probab.} 27 (2014) 1316-1328. 
\bibitem{PVB} J. Parad\'is, P. Viader and  L. Bibiloni,  
Riesz-N\'agy singular fnuctions revisited, 
{\it J. Math. Anal. Appl.} 329 (2007) 592-602.  
\bibitem{dR} G. de Rham, 
Sur quelques  courbes d\'{e}finies par des \'{e}quations fonctionalles, 
{\it Univ. e Politec. Torino. Rend. Sem. Mat.} 16 (1957), 101-113.  
\bibitem{St} W. F. Stout, 
A Martingale analogue of Kolmogorov's law of the iterated logarithm,  
{\it Z. Wahrscheinlichkeitstheorie und Verw. Gebiete} 15 (1970), 279-290. 
\bibitem{T} T. Takagi,  
A simple example of the continuous function without derivative, 
{\it Phys.-Math. Soc. Japan} 1 (1903), 176-177. {\it The Collected Papers of Teiji Takagi}, S. Kuroda, Ed., Iwanami (1973), 5-6.
\bibitem{W} D. Williams, 
{\it Probability with martingales},
Cambridge University Press, 1991. 
\end{thebibliography}
\end{document}